\DeclarePairedDelimiter\floor{\lfloor}{\rfloor}
\newtheorem{theorem}{Theorem}%[section]
\newtheorem{lemma}[theorem]{Lemma}
\newtheorem{proposition}[theorem]{Proposition}
\newtheorem{corollary}[theorem]{Corollary}
\newtheorem{remark}[theorem]{Remark}
\newcommand\CC{\mathbb{C}}
\newcommand\FF{\mathbb{F}}
\newcommand\PP{\mathbb{P}}
\newcommand\QQ{\mathbb{Q}}
\newcommand\RR{\mathbb{R}}
\newcommand\ZZ{\mathbb{Z}}
\newcommand\cC{\mathcal{C}}
\newcommand\cF{\mathcal{F}}
\newcommand\cI{\mathcal{I}}
\newcommand\cT{\mathcal{T}}
\newcommand\cY{\mathcal{Y}}
\newcommand\Sp{\text{Sp}}
\newcommand\h{\mathfrak{h}}
\newcommand\chat{\widetilde{\mathcal{C}}}
\newcommand\C{\cC}
\newcommand\Thetatilde{\widetilde\Theta}
\newcommand\chatx{\widetilde{\cC}_X}
\newcommand\chate{\widetilde{\cC}_E}
\title{The Topology of the Zero Locus of A Genus 2 Theta Function}
\author{Kevin Kordek}
\date{}
\begin{document}
\maketitle
\begin{abstract}
Mess showed that the genus 2 Torelli group $T_2$ is isomorphic to a free group of countably infinite rank by showing that genus 2 Torelli space is homotopy equivalent to an infinite wedge of circles. As an application of his computation, we compute the homotopy type of the zero locus of any classical genus 2 theta function in $\h_2 \times \CC^2$, where $\h_2$ denotes rank 2 Siegel space. Specifically, we show that  the zero locus of any such function is homotopy equivalent to an infinite wedge of 2-spheres.
\end{abstract}
\section{Introduction}
Theta functions are classical holomorphic functions of several variables defined on $\h_g\times \CC^g$, where $\h_g$ denotes the rank $g$ Siegel upper half-space. Since the $19$th century, theta functions have played a central role in algebraic geometry, especially the geometry of curves and abelian varieties. 

Although these functions have been the object of intense study for well over a century, it appears that essentially nothing is known about the specific topological properties of their zero loci once $g\geq 2$. In this paper, we will use Geoffrey Mess's explicit computation \cite{mess1992torelli} of the isomorphism type of the genus 2 Torelli group to prove the following result.
\begin{theorem}\label{theorem1}
Let $\vartheta_{\alpha}: \h_2\times \CC^2\rightarrow \CC$ denote a theta function of genus 2. Then the zero locus of $\vartheta_{\alpha}$ is homotopy equivalent to an infinite bouquet of 2-spheres.
\end{theorem}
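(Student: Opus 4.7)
The plan is to analyze the projection $p : Z \to \h_2$. Let $D \subset \h_2$ denote the locus of decomposable principally polarized abelian surfaces (those isogenous to a product of two elliptic curves); this is a complex codimension-one locus with countably many connected components, each diffeomorphic to $\h_1 \times \h_1$. Its complement $J_2 := \h_2 \setminus D$ is, by Mess's theorem, identified with genus $2$ Torelli space and so is homotopy equivalent to an infinite wedge of circles, with one circle per component of $D$; these components are in bijection with isotopy classes of separating simple closed curves $\gamma_\sigma$ on the genus $2$ surface $\Sigma_2$, whose Dehn twists freely generate $T_2$. For $\tau \in J_2$ the Abel--Jacobi theorem identifies the smooth theta divisor $\Theta_\tau \subset A_\tau = \CC^2/\Lambda_\tau$ with $\Sigma_2$, so the fiber $p^{-1}(\tau) \subset \CC^2$ is the universal abelian cover $\widetilde\Sigma_2 \to \Sigma_2$, a non-compact surface with free fundamental group $[\pi_1(\Sigma_2),\pi_1(\Sigma_2)]$ of countably infinite rank; hence $\widetilde\Sigma_2 \simeq \bigvee_\infty S^1$. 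Over a point of $D$ the fiber degenerates to a $\ZZ^4$-indexed lattice arrangement of pairs of complex lines meeting at nodes.

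Next I would use the heat equation $\partial \vartheta_\alpha / \partial \tau_{ij} = (2\pi i)^{-1} \partial^2 \vartheta_\alpha / \partial z_i \partial z_j$ to verify that $Z$ is a smooth complex hypersurface of $\h_2 \times \CC^2$: at any fiberwise Morse node of $\vartheta_\alpha$ the $z$-Hessian is nonzero, and therefore so are the $\tau$-derivatives. Hence $p$ behaves as a Lefschetz-type fibration with smooth total space whose ``critical locus'' is $D$, and the expected $2$-spheres should appear as vanishing-cycle spheres. For each component $D_\sigma$ the separating curve $\gamma_\sigma$ is the vanishing cycle of the nodal degeneration; since $\gamma_\sigma$ is nullhomologous it lifts to $\widetilde\Sigma_2$ as countably many disjoint simple loops, one per element of $\ZZ^4$. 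Each such lift bounds a Lefschetz thimble in $Z$, and the local holomorphic model $\{xy = t\}$ around each node fits a pair of opposite thimbles into an embedded $2$-sphere. This produces a countable family of $2$-spheres in $Z$, indexed by $\{\text{components of } D\} \times \ZZ^4$, which I expect generate $H_2(Z)$.

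The main obstacle is to show that these spheres account for the full homotopy type of $Z$. For $\pi_1(Z) = 0$: the long exact sequence of $Z|_{J_2} \to J_2$ presents $\pi_1(Z|_{J_2})$ as an extension of $T_2$ by $[\pi_1(\Sigma_2),\pi_1(\Sigma_2)]$, and I would argue that both kinds of generator die on passing to $Z$. A loop around $D_\sigma$ in $J_2$ bounds a disk in $\h_2$ crossing $D_\sigma$ transversally once, whose thimble-lift is a capping disk in $Z$; a fiber loop in $\widetilde\Sigma_2$ is normally generated in $\pi_1(\Sigma_2)$ by separating curves, which lift to vanishing cycles that also bound thimbles. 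For the higher homology, $Z$ is a closed complex hypersurface in the Stein manifold $\h_2 \times \CC^2$, hence itself Stein of complex dimension $4$, so $Z$ has the homotopy type of a CW complex of real dimension at most $4$. A Mayer--Vietoris computation for the decomposition $Z = Z|_{J_2} \cup Z|_{N(D)}$ (with $N(D)$ a tubular neighborhood of $D$), coupled with the Picard--Lefschetz local model, should then yield $H_k(Z) = 0$ for $k \geq 3$ and $H_2(Z)$ free abelian on the vanishing spheres. The delicate point is executing this Mayer--Vietoris carefully in the non-compact setting and confirming that no additional cycles survive.
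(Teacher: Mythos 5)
Your overall strategy coincides with the paper's: identify $\h_2\setminus\h_2^{red}$ with Torelli space via Mess, observe that the smooth fibers of $Z\to\h_2$ are universal abelian covers of genus $2$ curves, use the heat equation to show $Z$ is smooth, stratify by $\h_2^{red}$, compute the homology, and finish with Whitehead's theorem. Where you differ is in the homological bookkeeping: the paper uses a PL Gysin sequence and a Mayer--Vietoris-type spectral sequence for compactly supported cohomology of $\widetilde{\Theta}^{red}$ (both from Hain's work) instead of your proposed tubular-neighborhood Mayer--Vietoris; the Stein observation ($Z$ has the homotopy type of a CW complex of real dimension $\leq 4$) is a nice shortcut to bound the dimension, but it only kills $H_k$ for $k\geq 5$, and the crucial vanishing in degrees $3$ and $4$ is exactly where the technical content of Sections 4--6 lives. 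Also, the paper shows $\pi_1(\Theta)\cong\ZZ^4$ by analyzing the inclusion $\cC\hookrightarrow\mathfrak{X}_2$, which makes $Z$ literally the universal cover of $\Theta$; this identification (Proposition \ref{proposition8} and its corollaries) does a lot of work that your more ad hoc killing-of-generators argument would have to replicate.

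There is one genuine error worth flagging. Your picture of ``a pair of opposite thimbles fitting into an embedded $2$-sphere'' at each node, producing $2$-spheres indexed by $\{\text{components of } D\}\times\ZZ^4$, does not hold. In the local model $\{xy=t\}$ over the transverse disk $\Delta_\beta$, the two relevant relative $2$-cycles are sections $\tilde s_{\beta,1},\tilde s_{\beta,2}$ meeting the two branches of the nodal fiber, and the boundary of their difference is a nonzero lift $\tilde\ell_\beta$ of the vanishing cycle in $H_1(\chat_{\partial\Delta_\beta})$; it does not close up. A single node yields no absolute $2$-cycle. To get closed $2$-cycles one must combine $\tilde s_{\beta,1}-\tilde s_{\beta,2}$ over \emph{several} components $\h_{2,\beta}^{red}$, using a relation $\ell_1\cdots\ell_n=1$ among separating simple closed curves in $\pi_1(S_2)$ so that $\sum_k[\tilde\ell_k]=0$ --- this is precisely what Section 7 of the paper does via the Hall--Witt identity. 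Relatedly, your sketch does not address why $H_2(Z)$ is torsion-free or why its rank is infinite; the paper handles the former via the injectivity of $\partial\colon H_3(\chat,\chat-\chat^{red})\to H_2(\chat-\chat^{red})$ and the freeness of $H_2(\chat,\chat-\chat^{red})$, and the latter via the Cartan--Leray spectral sequence of the $\ZZ^4$-covering $\chat\to\cC$ combined with an explicit basis of $H_2(\cC,\cC-\cC^{red})$.
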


The outline of the paper is as follows. In Section 2, we present the necessary background in mapping class groups and the geometry of algebraic curves and their jacobians. 

The Torelli space $\cT_2^c$ is the moduli space of genus 2 curves of compact type with an integral homology framing (see Section \ref{subsection2.2}). In Section 3, we establish a link between zero loci of genus 2 theta functions and the universal family of curves $\C\rightarrow \cT_2^c$, which we now describe. 

The rank 2 Siegel space $\h_2$ may be regarded as the moduli space of principally polarized abelian surfaces with a homology framing (see Section \ref{subsection2.3}). It also carries a universal family $\mathfrak{X}\rightarrow \h_2$ of principally polarized abelian surfaces. Let $\Theta_{\alpha}$ denote the zero locus of $\vartheta_{\alpha}$ on $\mathfrak{X}$.

 \begin{proposition}\label{proposition8}
As a family of framed curves, the family $f: \Theta_{\alpha}\rightarrow \h_2$ is isomorphic to the universal family $\C\rightarrow \cT_2^c$.
\end{proposition}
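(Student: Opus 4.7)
The plan is to realize $f\colon \Theta_\alpha \to \h_2$ as the pullback of the universal curve $\C\to\cT_2^c$ along the inverse of the classical Jacobian map. First I would construct the Torelli morphism $J\colon \cT_2^c \to \h_2$ which sends a framed genus 2 curve of compact type $(C,\varphi)$ to the principally polarized abelian surface $\jac(C)$, together with the framing induced by the canonical isomorphism $H_1(C;\ZZ)\cong H_1(\jac(C);\ZZ)$. Two classical facts show that $J$ is a holomorphic bijection: Torelli's theorem gives injectivity on the locus of smooth curves, and the classification of principally polarized abelian surfaces says that every indecomposable PPAS is the Jacobian of a smooth genus 2 curve while every decomposable PPAS $E_1\times E_2$ is the generalized Jacobian of the compact-type nodal curve $E_1\cup_{\mathrm{pt}} E_2$. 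Since $\h_2$ and $\cT_2^c$ are both smooth complex manifolds of dimension $3$, $J$ is a biholomorphism.

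Next I would identify the theta divisor with the curve fiberwise. For $\tau$ in the locus of smooth Jacobians, Riemann's theorem identifies $\Theta_{\alpha,\tau}\subset A_\tau$ with a translate of the Abel--Jacobi image of $C$, hence with $C$ itself. For decomposable $A_\tau=E_1\times E_2$, the genus 2 theta function factors (up to an exponential) as a sum of products of genus 1 theta functions, and its zero locus is a translate of $(\{0\}\times E_2)\cup(E_1\times\{0\})$, which is precisely the compact-type curve $E_1\cup_{\mathrm{pt}}E_2$. Different characteristics $\alpha$ shift the zero divisor by a $2$-torsion point of $A_\tau$, which is a biholomorphism of $A_\tau$, so the isomorphism class of $\Theta_\alpha$ as a family of curves over $\h_2$ is independent of $\alpha$.

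To finish, one globalizes: the above construction yields a holomorphic family of compact-type genus 2 curves $\Theta_\alpha\to\h_2$ with a fiberwise framing induced from the ambient PPAS. Pulling back along $J^{-1}$ produces a family of framed compact-type curves over $\cT_2^c$ whose moduli-classifying map is the identity, so by the universal property of $\C\to\cT_2^c$ the two families are isomorphic. Transporting back along $J$ yields the desired isomorphism $\Theta_\alpha\cong \C$ over the identification $\h_2\cong\cT_2^c$.

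The main obstacle I expect is verifying that the homological framing on $\Theta_{\alpha,\tau}$ inherited from $A_\tau$ agrees with the framing on the abstract curve $C$ as a point of $\cT_2^c$. This reduces to checking compatibility of the canonical isomorphism $H_1(C;\ZZ)\cong H_1(\jac(C);\ZZ)$ with the inclusion $C\hookrightarrow A_\tau$ realized (up to translation) as the theta divisor, together with a careful treatment of the boundary stratum where $C$ is nodal, using the generalized Jacobian and the Mayer--Vietoris identification of $H_1$ for the two-component compact-type curve.
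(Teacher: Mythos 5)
Your argument is correct and follows the same route as the paper: identify the extended period map $\cT_2^c \to \h_2$ as a biholomorphism, check fiberwise that the theta divisor over $\Omega$ is a framed genus 2 compact-type curve with period matrix $\Omega$, and conclude by the universal property of $\C \to \cT_2^c$, which is exactly what the paper does (relying on the immediately preceding proposition for the fiberwise identification). Two small imprecisions worth fixing: for a block-diagonal period matrix the genus 2 theta function factors as a \emph{product} of genus 1 theta functions, not a sum of products (equation (\ref{thetaproduct})); and injectivity of the period map on $\cT_2$ does not follow from Torelli's theorem alone --- in the framed setting the period map is generically $2{:}1$ and is injective only along the hyperelliptic locus, so one must invoke the fact that every genus 2 curve is hyperelliptic.
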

We will also establish the geometric facts about the universal family needed to compute the integral homology groups of the zero locus of $\vartheta_{\alpha}$ in $\h_2\times \CC^2$. It will turn out that this information suffices to determine the homotopy type since, as we will show, $\Theta_{\alpha}$ is simply connected. Proposition \ref{proposition8} is of crucial importance here, as it allows us to compute many of these homology groups in terms of the homology of the genus $2$ Torelli group $T_2$. 

Sections 4-6 are technical in nature, and this is where the homological computations will take place. The key results here are Propositions \ref{prop22}, \ref{prop31} and \ref{prop34} which together, along with the homological form of Whitehead's Theorem, imply that $\Theta_{\alpha}$ is homotopy equivalent to an infinite bouquet of 2-spheres. 

The paper concludes with Section 7, where we use commutator calculus in the fundamental group of a genus 2 surface to construct an infinite family of homotopically non-trivial maps $S^2\rightarrow \Theta_{\alpha}$. 

\subsection{Acknowledgements}
The main results of this paper were a part of my thesis. I would like to thank my advisor Dick Hain for suggesting this problem. I thank the referee for numerous comments and suggestions which helped to significantly improve the quality of this paper. 
\section{Background Material}

\subsection{Mapping Class Groups}

Fix a closed orientable surface of genus $g$ with $r$ marked points $S_{g,r}$. The mapping class group $\Gamma_{g,r}$ is the group of isotopy classes of orientation-preserving diffeomorphisms of $S_{g,r}$ that fix the marked points. When $r = 0$, it will be omitted from the notation.

Note that the isomorphism type of the group $\Gamma(S_{g,r})$ depends only on $g$ and $r$. The \emph{Torelli group} $T_g$ is the kernel of the natural map $\Gamma_g\rightarrow \Sp_g(\ZZ)$. In other words, the elements of $T_g$ are isotopy classes of orientation preserving diffeomorphisms of $S_g$ which act trivially on $H_1(S_g,\ZZ)$.

The Torelli group $T_g$ is not well-understood when $g\geq 3$. Mess \cite{mess1992torelli} was, however, able to give a very explicit description of $T_2$. To describe his result, we will need to introduce some terminology. 
%%%Changes made to this paragraph%%%%

Let $(V,\omega)$ be a free abelian group endowed with a symplectic form. A \emph{symplectic splitting} of $V$ is a direct sum decomposition $V = V_+\oplus V_-$ such that $V_+$ is orthogonal to $V_-$ with respect to $\omega$.  If $(V,\omega)$ is equal to $H_1(S_g,\ZZ)$ equipped with 
%typo corrected
the 
intersection form, such a decomposition is called a \emph{homology splitting}. Let $c$ denote a separating simple closed curve (SSCC) on $S_g$. Then $c$ induces a homology splitting on $S_g$ in the following way. Cutting along $c$ divides $S_g$ into two closed subsurfaces, $S_g'$ and $S_g''$, each with one boundary component. Then $H_1(S_g',\ZZ)\oplus H_1(S_g'',\ZZ)$ gives a splitting of $H_1(S_g,\ZZ)$, since both summands are generated by cycles supported on one of the two sides of $c$. Conversely, every homology splitting on $S_g$ arises in this way \cite[Section 6]{johnsonconjugacy}. Two SSCC's $c_1,c_2$ on $S_g$ induce the same homology splitting if and only if $c_1$ is carried into $c_2$ by an element of $T_g$ (see, for example, \cite[Section 6]{johnsonconjugacy} or \cite[p.186]{farb2011primer}).
\begin{theorem}[Mess, \cite{mess1992torelli}]\label{theorem 9}
The Torelli group $T_2$ is isomorphic to a free group of countably infinite rank. There is precisely one free generator for each homology splitting on $S_2$. The generator corresponding to a homology splitting $\beta$ is a Dehn twist $T_{c_{\beta}}$ on a SSCC $c_{\beta}$ realizing $\beta$.
\end{theorem}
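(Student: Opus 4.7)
The plan is to realize $T_2$ as the fundamental group of a space whose homotopy type can be computed directly. Since Teichmüller space is contractible and $T_2$ acts freely on it, the open Torelli space $\cT_2$ (the moduli of smooth genus $2$ curves with homology framing) is a $K(T_2,1)$, and it suffices to identify $\cT_2$ concretely and compute its fundamental group together with an explicit generating set.

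First I would invoke Torelli's theorem in its extended form for curves of compact type to identify $\cT_2^c$ with $\h_2$ via the Jacobian map: in genus $2$ every principally polarized abelian surface is the Jacobian of a unique curve of compact type, and a symplectic framing of the abelian surface determines a framing of the curve's first homology. Under this identification, $\cT_2 \subset \cT_2^c$ corresponds to the complement $\h_2 \setminus \cP$, where $\cP$ is the locus of period matrices whose abelian varieties split as products of elliptic curves. This locus decomposes as a disjoint union $\bigcup_\beta H_\beta$ indexed by homology splittings $\beta = V_+ \oplus V_-$ of $H_1(S_2,\ZZ)$: the subset $H_\beta$ of period matrices block-diagonal with respect to $\beta$ is a copy of $\h_1 \times \h_1$, of complex codimension one in $\h_2$. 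The set of splittings is a single $\Sp_4(\ZZ)$-orbit, hence countable, so we have a countable, locally finite arrangement.

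The main technical step is to show that $\h_2 \setminus \bigcup_\beta H_\beta$ is homotopy equivalent to a countably infinite wedge of circles, with one circle for each $H_\beta$. The strategy is to exploit that $\h_2$ is contractible and that distinct Humbert surfaces meet in complex codimension at least two, so the arrangement is generically simple: for each $\beta$ one produces a small disk $D_\beta \subset \h_2$ transverse to $H_\beta$ at a single point and disjoint from the rest of $\cP$, and the loops $\partial D_\beta$, based at a common point and joined by arcs avoiding $\cP$, provide a candidate free generating set for $\pi_1(\cT_2)$. I expect this step to be the principal obstacle, since it requires controlling the global combinatorics of intersections $H_\beta \cap H_{\beta'}$ and showing that $\cT_2$ has the homotopy type of a $1$-complex rather than picking up higher cells; this might be approached via a stratified Morse-theoretic analysis of a suitable exhaustion of $\h_2$, or by constructing an explicit spine compatible with the arrangement.

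Finally, I would match the generators with Dehn twists. For each splitting $\beta$, the loop $\partial D_\beta$ encircles a point of $\cT_2^c$ parametrizing a reducible nodal curve whose node is obtained by pinching an SSCC $c_\beta$ realizing $\beta$. The Picard--Lefschetz formula for nodal degenerations identifies the geometric monodromy of $\partial D_\beta$ in the universal family $\C \to \cT_2^c$ with the Dehn twist $T_{c_\beta}$, and the already-recalled fact that two SSCCs determine the same splitting if and only if they are $T_2$-equivalent guarantees that the resulting assignment $\beta \mapsto T_{c_\beta}$ is well defined, completing the identification of generators.
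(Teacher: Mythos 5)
The paper does not prove this theorem: it is a cited background result from Mess, and the only "proof" in the paper is the citation. So there is no internal proof to compare against. What the paper does do, in the discussion preceding the lemma on $\pi_1(\cC,*)$, is recapitulate the \emph{output} of Mess's argument in exactly the form you sketch: $\cT_2$ is identified with $\h_2 - \h_2^{red}$ via the period map, the components $\h_{2,\beta}^{red}$ are indexed by homology splittings, loops $\gamma_\beta^{-1}\cdot\partial\Delta_\beta\cdot\gamma_\beta$ around small transverse disks freely generate $\pi_1(\h_2 - \h_2^{red})$, and the monodromy of the universal family around $\partial\Delta_\beta$ is the Dehn twist on a vanishing cycle realizing $\beta$. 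So the overall architecture of your proposal is the right one.

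That said, the central step — that the complement of the countable locally finite arrangement $\h_2^{red}$ in the contractible space $\h_2$ has the homotopy type of a one-complex, with one circle per component and no higher cells — is precisely the substance of Mess's paper, and your proposal names it as the principal obstacle without resolving it. Producing transverse disks and joining arcs is the easy part; the content is ruling out higher homotopy, and this does require a genuine argument (Mess's is rather delicate). As written, the proposal is an outline of a proof, not a proof. One further imprecision worth correcting, since it bears on that hard step: you first say $\h_2^{red}$ is a \emph{disjoint} union of the $H_\beta$ and then, a few lines later, allow that distinct components "meet in complex codimension at least two." The former is what is actually true and what you should use: a framed ppav admits a \emph{unique} decomposition into indecomposable polarized factors, so the $\h_{2,\beta}^{red}$ are pairwise disjoint in $\h_2$ (the intersections you may be thinking of occur among Humbert surfaces in the quotient $A_2 = \Sp_2(\ZZ)\backslash\h_2$, not upstairs). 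The paper relies on this disjointness implicitly when it asserts that the components $\Theta^{red}_\beta$ are pairwise disjoint, and the homotopy-type computation is cleaner with disjointness than with codimension-two overlaps, so you should not weaken the hypothesis.
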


\begin{remark}
\emph{It is important to note that the isotopy class of $c_{\beta}$ cannot be chosen arbitrarily --- it is contained in a \emph{specific} isotopy class of curves which induce the homology splitting $\beta$. Unfortunately, this isotopy class has not been determined explicitly. In \cite{mess1992torelli}, Mess conjectures that the isotopy class of $c_{\beta}$ is represented by shortest-length closed geodesic inducing the homology splitting $\beta$.
}
\end{remark}
\subsection{Moduli of Curves}\label{subsection2.2}
Assume that $g\geq 2$ and let $\mathcal{X}_g$ denote genus $g$ Teichm\"{u}ller space. The mapping class group $\Gamma_g$ acts biholomorphically, properly discontinuously, and virtually freely on $\mathcal{X}_g$. The moduli space of genus $g$ curves $M_g$ is, as an analytic variety, isomorphic to the quotient space $\Gamma_g\backslash \mathcal{X}_g$. The Torelli space $\cT_g$ is the quotient $T_g\backslash \mathcal{X}_g$. It is the moduli space of genus $g$ curves with a symplectic basis for the first integral homology, i.e. a homology framing. Because $T_g$ is torsion-free, $\cT_g$ is a  $K(T_g,1)$-space.

A curve of compact type is a stable nodal curve all of whose irreducible components are smooth and whose dual graph is a tree. The space $\cT_g$ can be enlarged to a complex manifold $\cT_g^c$ whose points parametrize genus $g$ curves of compact type with a homology framing \cite{hain2006finiteness}. The $\Sp_g(\ZZ)$-action on $\cT_g$ extends to $\cT_g^c$, and the moduli space $M_g^c$ of curves of compact type is the quotient $\Sp_g(\ZZ)\backslash \cT_g^c$ by \cite{hain2006finiteness}.
\subsection{Abelian Varieties}\label{subsection2.3}

The material in this section is classical. Most of it can be found in \cite[Ch.2, Sec. 6]{griffiths2014principles}, for example. A \emph{polarized abelian variety} $A$ is an abelian variety along with a cohomology class $\theta\in H^2(A,\ZZ)$  (called a polarization) represented by a  positive, integral, translation-invariant $(1,1)$ form . A \emph{principally polarized} abelian variety (ppav) is a polarized abelian variety whose polarization, viewed as a skew-symmetric bilinear form on $H_1(A,\ZZ)$ can be put into the form
\begin{equation}
\left(\begin{array}{cc}0 & I_{g\times g} \\ -I_{g\times g}& 0\end{array}\right)
\end{equation}
 with respect to some integral basis for $H_1(A,\ZZ)$. A \emph{framing} on a ppav $(A,\theta)$ is a choice of basis for $H_1(A,\ZZ)$ which is symplectic with respect to $\theta$. The symplectic basis will be denoted $\cF$.

Given a framed ppav $(A,\theta, \cF)$ one can find a unique basis of holomorphic 1-forms for which the period matrix is of the form 
\begin{equation}
\left(\begin{array}{cc}\Omega \\ I_{g\times g}\end{array}\right).
\end{equation}
In this case, $\Omega$ is called the \emph{normalized period matrix} of $(A,\theta,\cF)$. Up to isomorphism, this framed ppav can be recovered as the quotient $A_{\Omega} := \CC^g/\Lambda(\Omega)$ where $\Lambda(\Omega) = \ZZ^g + \ZZ^g\Omega\subset \CC^g$ \cite[pp.306-307]{griffiths2014principles}.

The space of normalized period matrices $\Omega$ of $g$-dimensional ppav's is known as \emph{Siegel space of rank $g$}. It will be denoted by $\h_g$. It is 
%%% edit here%%%
the open subspace of $M_{g\times g}(\CC)\cong \CC^{g(g+1)/2}$ consisting of symmetric matrices with positive-definite imaginary part \cite[Prop. 8.1.1]{birkenhake2004complex}.
%%%edit finished%%%
The symplectic group $\Sp_g(\ZZ)$ of integral symplectic $2g\times 2g$ matrices holomorphically on $\h_g$ by the following formula \cite[Prop. 8.2.2]{birkenhake2004complex}:
\begin{equation}
M\cdot \Omega = (A\Omega+B)(C\Omega+D)^{-1}\ \ \ \ \ \text{where} \ M = \left(\begin{array}{cc}A & B \\ C& D \end{array}\right)\in \Sp_g(\ZZ).
\end{equation}

The product of two ppav's $(A_1, \theta_1), (A_2, \theta_2)$ is the ppav whose underlying torus is $A_1\times A_2$ and whose polarization is the product polarization  $p_1^*\theta_1\oplus p_2^*\theta_2$, where $p_j : A_1\times A_2$ is projection onto the $j^{th}$ factor \cite{hain2002rational}. A ppav which can be expressed as a product of two ppav's of positive dimension is called \emph{reducible}. The sublocus of $\h_g$ parametrizing reducible ppav's will be denoted $\h_g^{red}$. By \cite{hain2006finiteness} there is an equality
$$\h_g^{red} = \bigcup_{j=1}^{\floor{g/2}}\bigcup_{\phi\in \Sp_g(\ZZ)} \phi\left(\h_j\times \h_{g-j}\right).$$

\begin{remark}\label{remark11}

The following notation shall be used throughout the paper. If a framed (necessarily reducible) ppav has period matrix 
\begin{equation*}
\Omega = \left(\begin{array}{cc}\Omega_1 & 0 \\ 0& \Omega_2 \end{array}\right)
\end{equation*}
for matrices $\Omega_j \in \h_{g_j}$ with $g_1+g_2 = g$ and $g_j >0$, then we will write $\Omega = \Omega_1\oplus \Omega_2$.

\end{remark} 
\subsection{The Period Map}
There is a holomorphic map $\cT_g\rightarrow \h_g$ which sends the isomorphism class $[C;\cF]$ of a framed curve to its period matrix. It is called the \emph{period map \cite{hain2006finiteness}}. The sharp form of the Torelli Theorem \cite{mess1992torelli} states that the 
period map is a double branched cover of its image, and that it is injective along the locus of hyperelliptic curves. 

The period map extends to a proper holomorphic map $\cT_g^c\rightarrow \h_g$ (the \emph{extended period map}) that sends a framed curve of compact type to its period matrix \cite{hain2006finiteness}. The Torelli Theorem no longer holds in this setting; the fiber over a point in $\h_g^{red}$ will have positive dimension (see, for example, \cite{hain2006finiteness}).

The boundary $\cT_g^{c,red}: = \cT_g^c - \cT_g$ is equal to the preimage in $\cT_g^c$ of $\h_g^{red}$ under the period map \cite{hain2006finiteness}.

\subsection{Theta Functions}
A \emph{theta function of characteristic $\delta$} is a holomorphic function $\vartheta_{\delta}: \h_g\times \CC^g\rightarrow \CC$ defined by the following series:
\begin{equation}\label{thetaseries}
\vartheta_{\delta}(\Omega,z) = \sum_{m\in \ZZ^g}\exp(\pi i\left( (m+\delta')\Omega (m+\delta')^T + 2(m+\delta')(z+\delta'')^T\right))
\end{equation}
where $\delta = (\delta', \delta'') \in \frac{1}{2}\ZZ^{g}/\ZZ^g\oplus \frac{1}{2}\ZZ^{g}/\ZZ^g = \frac{1}{2}\ZZ^{2g}/\ZZ^{2g}\cong \FF^{2g}_{2}$ (see, for example, \cite{grushevsky}). The vector $\delta$ corresponds to a 2-torsion point of $A_{\Omega}$ and $\vartheta_{\delta}$ is called a theta function \emph{of characteristic $\delta$}. 

By differentiating the series (\ref{thetaseries}), one shows that $\vartheta_{\delta}$ satisfies the \emph{heat equation}:
\begin{equation}
2\pi i (1+\delta_{jk})\frac{\partial \vartheta_{\delta}}{\Omega_{jk}} = \frac{\partial ^2 \vartheta_{\delta}}{\partial z_j\partial z_k}
\end{equation}
where $\Omega = (\Omega_{jk})$ and $\delta_{jk}$ is the Kronecker delta \cite{grushevsky}.

The theta function $\vartheta_{\delta}$ is said to be \emph{even} (\emph{odd}) if for fixed $\Omega$ it is even (odd) as a function of $z$. Equivalently, $\vartheta_{\delta}$ is even (odd) if the number $\text{ord}_{z = 0}\vartheta_{\delta}(\Omega,z)$ is even (odd). If $\vartheta_{\delta}$ is even (odd), then the characteristic $\delta$ is also said to be even (odd). Equivalently, $\delta$ is even (odd) if and only if the number $\delta' (\delta'')^T\in \FF_2$ is equal to 0 (or 1) (see \cite{grushevsky}). Direct calculation shows that the theta function $\vartheta_{\delta}(\Omega,z)$ is equal to the product of $\vartheta_0(\Omega, z+ \delta' + \delta''\Omega)$ with a non-vanishing holomorphic function on $\h_2\times \CC^2$. 

If a characteristic $\delta\in \frac{1}{2}\ZZ^{2g}$ can be written in the form
\begin{equation}
\delta = \left((\delta_1',\ldots, \delta_n'), (\delta_1'',\ldots,\delta_n'')\right)
\end{equation}
 for some characteristics $\delta_j = (\delta_j',\delta_j'')\in \frac{1}{2}\ZZ^{2g_j}$ with $g_1+\cdots + g_n = g$, then we write $\delta = \delta_1\oplus\cdots \oplus\delta_n$. If $\Omega$ is a block matrix of the form $\Omega_1\oplus\cdots \oplus \Omega_n$, with $\Omega_j\in\h_{g_j}$, then there are characteristics $\delta_j\in \frac{1}{2}\ZZ^{2g_j}$ such that $\delta = \delta_1\oplus\cdots \oplus \delta_n$ and 
\begin{equation}\label{thetaproduct}
\vartheta_{\delta}(\Omega,z) = \vartheta_{\delta_1\oplus\cdots \oplus\delta_n}(\Omega,z) = \prod_{1\leq j\leq n}\vartheta_{\delta_j}(\Omega_j, z(j))
\end{equation}
 where $z(j) = (z_{g_1+\cdots g_j+1},\ldots , z_{g_1+\cdots + g_{j+1}})$ (see, for example, \cite{grushevskysalvatimanni}).
\\\\\indent 
The restriction of the theta function $\vartheta_{\delta}$ to $z = 0$ is the \emph{thetanull of characteristic $\delta$}. It is a holomorphic function
\begin{equation}
\vartheta_{\delta}(-,0): \h_g\rightarrow \CC.
\end{equation}
The thetanull $\vartheta_{\delta}(-,0)$ is called even or odd, respectively, if the corresponding theta function is. The odd thetanulls vanish identically \cite{grushevsky}.

Theta functions enjoy special transformation properties with respect to the $\Sp_g(\ZZ)$-action on $\h_g\times \CC^g$ given by the formula
$M\cdot (\Omega, z) = \left(M\cdot \Omega,\  z\cdot (C\Omega+D)^{-1}\right)$. More specifically, we have the function $\vartheta_{\delta}(M\cdot \Omega,\ z\cdot (C\Omega+D)^{-1})$ is equal to $u\cdot \vartheta_{\delta'}(\Omega,z)$, where $u$ is a specific (very complicated) non-vanishing holomorphic function depending on $\delta, M, \Omega$ and $z$ and $\delta'$ is a characteristic depending on $\delta$ and $M$ given by another complicated formula. Precise formulas are given can be found in \cite[p.227]{birkenhake2004complex}.
Thetanulls obey similar transformation laws.

By \cite[p.324]{birkenhake2004complex}, for fixed $\Omega_0\in \h_g$, the theta function $\vartheta_{\delta}(\Omega,z)$ defines a section of a holomorphic line bundle in the ppav $A_{\Omega_0}$ whose first Chern class coincides with the polarization.

Let $C$ be a smooth curve of genus $g$, and let $W_{g-1}\subset \text{Pic}^{g-1}(C)$ denote the codimension 1 subvariety consisting of classes of effective divisors on $C$ of degree $g-1$. Choose a theta characteristic $\alpha$ on $C$, i.e. a square root of the canonical bundle of $C$. The map $\text{Pic}^{g-1}(C)\rightarrow \text{Pic}^0(C)$ defined by $x\rightarrow x-\alpha$ sends $W_{g-1}$ to the locus $W_{g-1}-\alpha$ in $\text{Pic}^0(C)$. By Riemann-Roch, the divisor $W_{g-1}-\alpha$ is a symmetric theta divisor \cite[p.324]{birkenhake2004complex}; that is, it is preserved by the involution $x\rightarrow -x$, and its Chern class is equal to the polarization of $\text{Pic}^0(C)$ furnished by the cup product \cite[pp.327-328]{griffiths2014principles}. It will be denoted by $\Theta_{\alpha}$. This is a geometric formulation of \emph{Riemann's Theorem} \cite[Theorem 11.2.4]{birkenhake2004complex}.

The parity of $\Theta_{\alpha}$ matches that of the theta characteristic $\alpha$ \cite[Proposition 11.2.6]{birkenhake2004complex}. Let $\delta$ be a characteristic such that $\Theta_{\alpha}$ is the zero divisor of $\vartheta_{\delta}$. Then \emph{Riemann's Singularity Theorem} \cite[Theorem 11.2.5]{birkenhake2004complex} states that for $L\in \text{Pic}^{g-1}(C)$, 
\begin{equation*}
\text{mult}_{L}(W_{g-1}) = \text{mult}_{L-\alpha}(\Theta_{\alpha}) = \text{ord}_{L-\alpha}(\vartheta_{\delta}(\Omega_0,-)) = h^0(L)
\end{equation*}

\subsection{Genus 2 Curves and their Jacobians}\label{section2.4.2}
 
Let $C$ denote a smooth curve of genus 2. Then there is a special interaction between the theta characteristics, Weierstrass points, and the jacobian of $C$. Since the canonical linear system gives a 2:1 covering $C\rightarrow \PP^1$, the Weierstrass points of $C$ are easily seen, via the Riemann-Roch Theorem, to coincide with the odd theta characteristics on $C$. 

By Abel's Theorem, the choice of a Weierstrass point $w$ on $C$ gives rise to an embedding $C\hookrightarrow \text{Pic}^0(C)$ defined by $p\rightarrow p-w$. The image is an odd theta divisor in $\text{Pic}^0(C)\cong \text{Jac}(C)$ by Riemann's Singularity Theorem.  Thus any theta divisor on $\text{Jac}(C)$ is isomorphic to $C$ and is, in particular, a non-singular curve (see also \cite{mess1992torelli}). The Riemann Singularity Theorem gives a relation
\begin{equation}
\text{ord}_{p-w}(\vartheta_{\delta}(\Omega_0,-)) = h^0(p) = 1
\end{equation}
where $\Omega_0$ is the period matrix of $\text{Jac}(C)$ and $\vartheta_{\delta}$ is the theta function whose zero divisor is $\Theta_{w}$. This implies that any theta function on the jacobian of a smooth curve of genus 2 vanishes only to first order along its zero locus.
\\\\\indent 
If $C$ is a \emph{singular} genus 2 curve of compact type, then, by definition, it must consist of two elliptic curves $E_1,E_2$ joined at a single node. The generalized jacobian of $C$ is the product $E_1\times E_2$ (see, for example, \cite{mess1992torelli}). Any theta divisor on a 1-dimensional ppav consists of a single point, and, from our description of the theta divisor on a reducible ppav, it follows that theta divisor of $E_1\times E_2$ is of the form $E_1\times \{p_2\}\cup \{p_1\}\times E_2$, for some points $p_j\in E_j$ (see \cite{mess1992torelli}). In particular, $C$ is again isomorphic to the theta divisor of its (generalized) jacobian. Combining this with equation (\ref{thetaproduct}) above, we see that any theta function $\vartheta_{\delta}(\Omega_0,z)$ on $E_1\times E_2$ vanishes to first order along its zero locus, except at a single point  where it vanishes to order 2.

\section{The Geometry of the Universal Family $\C$}
%%%%%Rewrote the introduction to Section 3

%%%%%Reworking this paragraph%%%%%%%
Once and for all, define $\vartheta$ to be the theta function $\vartheta_{\delta}$ with characteristic $\delta = (1/2,1/2,1/2, 1/2)$. For the purposes of this paper, we have found it easier to work with $\vartheta$ than other theta functions. 

%%%% adding words here %%%%% 
For another theta function $\vartheta_{\alpha}$, direct calculation shows that there is a non-vanishing holomorphic function $u$ on $\h_2\times \CC^2$ and a characteristic $\epsilon = (\epsilon', \epsilon'')$ such that $\vartheta(\Omega, z) = u\vartheta_{\alpha}(\Omega, z + \epsilon' + \epsilon''\Omega)$. The holomorphic map $\h_2\times \CC^2\rightarrow \h_2\times \CC^2$ defined by $(\Omega,z)\rightarrow (\Omega, z + \epsilon' + \epsilon''\Omega)$ is easily seen to be a biholomorphism; in particular it is a homotopy equivalence. Thus the zero locus of $\vartheta$ is homotopy equivalent to that of $\vartheta_{\alpha}$. 

To prove Theorem \ref{theorem1}, it therefore suffices to show that the zero locus of $\vartheta$ is homotopy equivalent to an infinite bouquet of 2-spheres. Our work on this occupies Sections 3-6 of the paper. 

\vspace{.1in}
Let $\mathfrak{X}_2\rightarrow \h_2$ denote the universal family of framed principally polarized abelian surfaces over $\h_2$.  This can be constructed as the quotient of $\h_2\times \CC^2$ by $\ZZ^4$ via the action
\begin{equation*}
(m_1,m_2)\cdot (\Omega,z) = (\Omega, z + m_1 + m_2\Omega).
\end{equation*}
Since it has a global homology framing, it is a topologically trivial fiber bundle over $\h_2$ with fiber a real 4-torus. Any theta function $\vartheta_{\alpha}:\h_2\times \CC^2\rightarrow \CC$ can be viewed as a section of a holomorphic line bundle on $\mathfrak{X}_2$. The zero locus $\Theta$ of $\vartheta$ in $\mathfrak{X}_2$ can be viewed as a family of curves over $\h_2$.

The geometry of $\Theta\rightarrow \h_2$ can be described explicitly using the special properties of theta functions. This will allow us to give a very concrete description of the geometry of the universal curve $\cC_2\rightarrow \cT_2^c$. We will be able to deduce results about the geometry and topology of the universal covering $\chat\rightarrow \cC_2$, since, as we will show, the universal cover of $\Theta$ is simply the preimage of $\Theta$ in $\h_2\times \CC^2$, i.e. the zero locus of $\vartheta$ in $\h_2\times \CC^2$.

\subsection{The Zero Locus $\Theta$ as a Family of Curves}
Recall that a proper, surjective holomorphic map $\varphi:X\rightarrow S$ between complex analytic spaces is called a \emph{family of nodal curves} if it is flat and its fibers are nodal curves \cite{arbarello2011geometry}. 

Our first goal is to show that $\Theta\rightarrow \h_2$ is a family of curves. This involves checking that the family is flat. There is a convenient test for flatness that applies when the source and target are both connected complex manifolds, namely that the fibers are equidimensional (see, for example, \cite[pp.113-114]{grauert1994several}). In order to apply this criterion, we will first show that $\vartheta$ defines a complex submanifold $\Theta$ of $\mathfrak{X}_2$. 
\begin{lemma}
The zero locus $\Theta\subset \mathfrak{X}_2$ is a non-singular subvariety. 
\end{lemma}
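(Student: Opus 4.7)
The plan is to pass to the universal cover $\h_2\times\CC^2$ of $\mathfrak{X}_2$: since the covering map is a local biholomorphism and the preimage $\widetilde{\Theta}$ of $\Theta$ in $\h_2\times\CC^2$ is $\ZZ^4$-invariant (the quasi-periodicity factors of $\vartheta$ never vanish), it suffices to show that $\widetilde{\Theta}$ is a complex submanifold. For this I will verify that the gradient $\nabla\vartheta(\Omega_0,z_0)$ is nonzero whenever $\vartheta(\Omega_0,z_0)=0$, splitting into cases by the location of $\Omega_0$ in $\h_2$.

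If $\Omega_0\notin \h_2^{red}$, then $A_{\Omega_0}$ is the jacobian of a smooth genus 2 curve $C$. Since the characteristic $\delta=(1/2,1/2,1/2,1/2)$ is even, $\vartheta(\Omega_0,-)$ cuts out an even symmetric theta divisor on $A_{\Omega_0}$, which by Riemann's theorem is a translate $W_1-\alpha$ for some theta characteristic $\alpha$. Riemann's Singularity Theorem, in the form recalled in Section \ref{section2.4.2}, then gives $\text{ord}_{L-\alpha}\vartheta(\Omega_0,-)=h^0(L)=1$ for every $L\in W_1\cong C$, so even the fiberwise $z$-gradient is already nonzero.

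For $\Omega_0\in \h_2^{red}$, I would first use the transformation formula for theta functions under $\Sp_2(\ZZ)$: since this action is holomorphic and carries the zero locus of $\vartheta$ onto that of some $\vartheta_{\delta'}$ up to a non-vanishing automorphy factor, I may assume $\Omega_0=\Omega_1\oplus\Omega_2$ is block-diagonal, working with the corresponding $\vartheta_{\delta'}$ in place of $\vartheta$. At such $\Omega_0$ the product formula \eqref{thetaproduct} gives
\begin{equation*}
\vartheta_{\delta'}(\Omega_0,z) = \vartheta_{\delta'_1}(\Omega_1,z_1)\,\vartheta_{\delta'_2}(\Omega_2,z_2).
\end{equation*}
If only one factor vanishes at $z_0=(z_1^0,z_2^0)$, then the partial derivative in the complementary $z$-block is nonzero, since a genus 1 theta function vanishes to first order at its unique zero.

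The main obstacle is the node case, where both factors vanish at $z_0$ and so every first-order $z$-derivative of the product vanishes. Here I would invoke the heat equation with $j=1$, $k=2$:
\begin{equation*}
2\pi i\,\partial_{\Omega_{12}}\vartheta_{\delta'}(\Omega_0,z_0) = \partial^2_{z_1 z_2}\vartheta_{\delta'}(\Omega_0,z_0) = \partial_{z_1}\vartheta_{\delta'_1}(\Omega_1,z_1^0)\cdot\partial_{z_2}\vartheta_{\delta'_2}(\Omega_2,z_2^0),
\end{equation*}
which is nonzero by the genus 1 remark. Thus the $\Omega_{12}$-component of $\nabla\vartheta_{\delta'}$ does not vanish, and the full gradient survives. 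The difficulty is precisely this: at the node the fiberwise gradient is forced to vanish by the factorization, and one must move transversally out of the reducible locus in the $\Omega_{12}$-direction to detect the zero with order one. The heat equation is the key mechanism which converts the nondegenerate fiberwise mixed Hessian into a nonzero first-order variation in a base direction.
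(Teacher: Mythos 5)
Your proof is correct and rests on exactly the same mechanism as the paper's: the heat equation converts the $\Omega$-components of the gradient into the fiberwise $z$-Hessian, so vanishing of the full gradient would force the fiberwise zero of $\vartheta$ to have order at least $3$, contradicting the fact (Section \ref{section2.4.2}) that the fiberwise zero locus is a genus 2 curve of compact type and hence at worst nodal. The paper states this as a single contradiction argument, writing the whole gradient at once as
$\bigl(\tfrac{1}{2\pi i}\partial^2_{z_1}\vartheta,\ \tfrac{1}{2\pi i}\partial^2_{z_1 z_2}\vartheta,\ \tfrac{1}{2\pi i}\partial^2_{z_2}\vartheta,\ \partial_{z_1}\vartheta,\ \partial_{z_2}\vartheta\bigr)$
and invoking nodality; your version unwinds this into the explicit case split (smooth jacobian via Riemann Singularity, reducible non-node via the product formula, node via the mixed Hessian $\partial^2_{z_1 z_2}$ and the $\Sp_2(\ZZ)$ transformation). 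The extra case analysis and the explicit reduction to $\h_1\times\h_1$ are not needed for the argument, but they make transparent exactly which component of the gradient survives in each geometric situation.
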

\begin{proof}
This condition can be checked locally. At any point $p\in \Theta\subset \mathfrak{X}_2$, there is a defining function for $\Theta$ of the form $\vartheta_{\alpha}(\Omega,z)$ on an open subset $U$ of $\h_2\times \CC^2$. By the heat equation, the gradient of this holomorphic function is
\begin{equation}
\left(\frac{1}{2\pi i}\frac{\partial^2 \vartheta}{\partial z_1^2 }, \frac{1}{2\pi i}\frac{\partial^2 \vartheta}{\partial z_1\partial z_2 }, \frac{1}{2\pi i}\frac{\partial^2 \vartheta}{\partial z_2^2 }, \frac{\partial \vartheta}{\partial z_1 }, \frac{\partial \vartheta}{\partial z_2 }\right).
\end{equation}
This can never vanish, as that would imply that there is a period matrix $\Omega'$ theta function $\vartheta_{\alpha}(\Omega',-)$ has a zero of multiplicity at least 3. This would contradict the fact that $\vartheta_{\alpha}(\Omega',-)^{-1}(0)$ has at worst nodal singularities, as explained in Section \ref{section2.4.2}.
\end{proof}
\begin{proposition}
The projection $f:\Theta\rightarrow \h_2$ is a family of nodal curves. Every fiber is a genus 2 curve of compact type, and the fiber over $\Omega_0\in \h_2$ is naturally a framed curve with period matrix $\Omega_0$.  
\end{proposition}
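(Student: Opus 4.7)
The plan is to verify each defining property of a family of nodal curves --- properness, flatness, and the fiberwise nodal-curve condition --- and then identify each fiber as a framed genus $2$ curve of compact type whose period matrix is $\Omega_0$. Properness is immediate: $\mathfrak{X}_2 \to \h_2$ is a locally trivial real torus bundle (hence proper), and $\Theta$ is a closed analytic subset, so $f$ is proper. Surjectivity is also immediate: on each ppas $A_{\Omega_0}$, the function $\vartheta(\Omega_0, -)$ is a nonzero section of an ample line bundle (the theta line bundle), hence has zeros.

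The bulk of the work is the fiberwise analysis, which splits according to whether $\Omega_0 \in \h_2^{red}$. For $\Omega_0 \notin \h_2^{red}$, the abelian surface $A_{\Omega_0}$ is the jacobian of a unique (up to hyperelliptic involution) smooth genus $2$ curve $C$, and by Riemann's Theorem together with the genus $2$ case of Riemann's Singularity Theorem already invoked in Section \ref{section2.4.2}, the zero divisor of $\vartheta(\Omega_0, -)$ on $A_{\Omega_0}$ is a smooth theta divisor, hence isomorphic to $C$. For $\Omega_0 \in \h_2^{red}$, the excerpt already records that $A_{\Omega_0} \cong E_1 \times E_2$ and that any theta divisor on such a product has the form $\{p_1\} \times E_2 \cup E_1 \times \{p_2\}$, i.e.\ two elliptic curves meeting at a single node --- a nodal genus $2$ curve of compact type. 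When $\Omega_0 = \Omega_1 \oplus \Omega_2$ is literally block diagonal, this is visible directly from the product formula (\ref{thetaproduct}): $\vartheta(\Omega_0, z)$ factors as $\vartheta_{\delta_1}(\Omega_1, z_1)\,\vartheta_{\delta_2}(\Omega_2, z_2)$ for the induced decomposition $\delta = \delta_1 \oplus \delta_2$ of our fixed characteristic $(1/2,1/2,1/2,1/2)$, and each genus $1$ factor has a unique simple zero. For a general reducible $\Omega_0$ in the $\Sp_2(\ZZ)$-orbit of the block locus, the transformation law recorded in Section 2.5 sends $\vartheta$ to a unit multiple of another theta function whose zero locus we have just described; since the conclusion is biholomorphism-invariant on the ppas, it transfers back to $A_{\Omega_0}$.

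Flatness now follows from the standard test that a holomorphic map between connected complex manifolds with equidimensional fibers is flat: by the preceding lemma $\Theta$ is a complex submanifold of $\mathfrak{X}_2$, $\h_2$ is a connected complex manifold, and the fiber analysis above shows that every fiber of $f$ has complex dimension $1$. Combined with the fact that each fiber is a genus $2$ nodal curve of compact type, this shows that $f$ is a family of nodal curves of the stated type.

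It remains to equip each fiber with a natural framing and to verify that the fiber over $\Omega_0$ has period matrix $\Omega_0$. For smooth $\Omega_0$, choose a base point on the theta divisor; the Abel--Jacobi embedding $C \hookrightarrow A_{\Omega_0}$ of Section \ref{section2.4.2} induces an isomorphism $H_1(C,\ZZ) \xrightarrow{\sim} H_1(A_{\Omega_0}, \ZZ)$, and pulling back the given symplectic basis of $H_1(A_{\Omega_0}, \ZZ)$ through this isomorphism gives a symplectic basis of $H_1(C, \ZZ)$ whose associated period matrix is $\Omega_0$ by construction. At reducible $\Omega_0$, the same recipe applied componentwise through the generalized jacobian $E_1 \times E_2$ produces the framing. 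I expect the main obstacle to be the bookkeeping at the reducible locus --- in particular, ensuring that the characteristic decomposition $\delta = \delta_1 \oplus \delta_2$ and the induced factorization of $\vartheta$ match cleanly under the $\Sp_2(\ZZ)$-action needed to cover the non-block-diagonal reducible points; once this is handled, the rest of the verification is structural.
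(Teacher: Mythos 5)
Your proof is correct and takes essentially the same route as the paper: properness from properness of $\mathfrak{X}_2\to\h_2$ plus closedness of $\Theta$, identification of the fiber over $\Omega_0$ with the zero locus of $\vartheta(\Omega_0,-)$ in $\CC^2/\Lambda(\Omega_0)$ (a genus $2$ curve of compact type by the background in Section \ref{section2.4.2}), flatness via the equidimensional-fibers criterion for connected complex manifolds, and the framing inherited from the inclusion into the abelian surface. You spell out some steps the paper leaves implicit --- the irreducible/reducible case split, the $\Sp_2(\ZZ)$ bookkeeping at non-block-diagonal reducible points, and surjectivity --- but these are details the paper subsumes under its Section \ref{section2.4.2} discussion rather than a different line of argument.
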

\begin{proof}
Since the map $\mathfrak{X}_2\rightarrow \h_2$ is proper and $\Theta$ is a closed analytic subset of $\mathfrak{X}_2$, it follows that $p$ is proper holomorphic map. The fiber $\Theta_{\Omega_0}$ over $\Omega_0\in \h_2$ is isomorphic, as an analytic space, to the intersection $\Theta\cap \mathfrak{X}_{2,\Omega_0}$. This is the analytic subspace of $\mathfrak{X}_2$ locally cut out by functions
\begin{equation}
\Omega_{11}-(\Omega_0)_{11}\ \ \ \ \Omega_{12}-(\Omega_0)_{12}\ \ \ \ \ \Omega_{22}-(\Omega_0)_{22}\ \ \ \ \ \vartheta(\Omega,z).
\end{equation}
From here it follows that the fiber $\Theta_{\Omega_0}$ is isomorphic to the zero locus of $\vartheta(\Omega_0,z)$ inside the complex torus $\CC^2/\Lambda(\Omega_0)$. This is a genus 2 curve of compact type (hence a nodal curve), and it naturally inherits the homology framing on $\mathfrak{X}_{2,\Omega_0}$. With respect to this framing, the curve $\Theta_{\Omega_0}$ has period matrix $\Omega_0$. Since $\Theta$ and $\h_2$ are connected complex manifolds and the fibers of $f$ are equidimensional, if follows that $f$ is flat.
\end{proof}
Since framed curves of compact type have no automorphisms, the Torelli space of compact type curves $\cT_2^c$ is a fine moduli space. As such, there is a universal family of framed curves of compact type $\cC\rightarrow \cT_2^c$. The restriction of $\cC$ to the preimage of $\cT_2$ in $\C$ is the universal family over $\cT_2$ (see \cite{mess1992torelli}); we shall denote it by $\C'\rightarrow \cT_2$. It is a $C^{\infty}$ fiber bundle over $\cT_2$ with standard fiber $S_2$.
\begin{proof}[Proof of Proposition \ref{proposition8}]
The moduli map $\h_2\rightarrow \cT_2^c$ is the holomorphic map which sends a period matrix $\Omega_0$ to the isomorphism class $[C;F]$ of framed curves with period matrix $\Omega_0$. This is simply the inverse map of the period map $\pi:\cT_2^c\rightarrow \h_2$, hence it is a biholomorphism. Since the family $f:\Theta\rightarrow \h_2$ is recovered from $\C\rightarrow \cT_2^c$ via pullback along $\pi^{-1}$, there is an isomorphism of families of framed curves $\Theta\xrightarrow{\cong} \C$.

\end{proof}
The fact that $\C$ is isomorphic to $\Theta$ will be of considerable utility in the sequel. Using facts about theta functions, we will be able to explicitly describe the geometry of $\Theta$ and its universal cover, and therefore that of $\C$ and $\chat$.  
\\\\\indent 
One of the first such results concerns the geometry of the locus $\C^{red}$ of reducible fibers of $\C\rightarrow \cT_2^c$. Define $\Theta^{red}$ to be the preimage of $\h_2^{red}$ in $\Theta$ under the projection $\Theta\rightarrow \h_2$. Recall that a divisor with \emph{simple normal crossings} is a normal crossings divisor all of whose irreducible components are smooth. Define 
%$\Theta^{red}_{\h_{2,\beta}^{red}}$ 
$\Theta^{red}_{\beta}$ to be the component of $\Theta^{red}$ lying over the component $\h_{2,\beta}^{red}$ of $\h_2^{red}$. 
\begin{proposition}
The locus $\C^{red}$ of singular fibers in $\C$ is a divisor with simple normal crossings.
\end{proposition}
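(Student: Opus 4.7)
The plan is as follows. By Proposition \ref{proposition8}, it suffices to show that $\Theta^{red}\subset\Theta$ is a divisor with simple normal crossings. Since the SNC property is local, I would carry out a local computation at a node of a reducible fiber, combining the product formula (\ref{thetaproduct}) with the smoothness of $\Theta$ established in the previous lemma. I would first reduce to the standard component $\mathfrak{h}_1\times\mathfrak{h}_1\subset\mathfrak{h}_2^{red}$: for any homology splitting $\beta$, pick $\phi_\beta\in\Sp_2(\ZZ)$ carrying the standard splitting to $\beta$, and use the $\Sp_2(\ZZ)$-transformation law for $\vartheta$ to see that, up to a nonvanishing holomorphic factor, $\phi_\beta^*\vartheta$ is a theta function $\vartheta_{\delta'}$ whose zero locus near $\mathfrak{h}_1\times\mathfrak{h}_1$ is biholomorphic to $\Theta$ near $\mathfrak{h}_{2,\beta}^{red}$. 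So the analysis reduces to the diagonal case.

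I would then coordinatize $\mathfrak{h}_2$ near $\mathfrak{h}_1\times\mathfrak{h}_1$ by $\Omega=\bigl(\begin{smallmatrix}\tau_1 & w\\ w & \tau_2\end{smallmatrix}\bigr)$, so $\{w=0\}$ cuts out the standard component. Formula (\ref{thetaproduct}) at $w=0$ gives $\vartheta_{\delta'}|_{w=0}=\vartheta_{\delta_1'}(\tau_1,z_1)\vartheta_{\delta_2'}(\tau_2,z_2)$, and the description of reducible theta divisors in Section \ref{section2.4.2} forces each $\delta_i'$ to be odd, with $\vartheta_{\delta_i'}$ vanishing to first order at a single point of its elliptic factor. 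Hence $\Theta^{red}_\beta$ is locally the union of two smooth three-dimensional subvarieties $\{w=0,\ z_i=0\}$, each a smooth family of elliptic curves over $\mathfrak{h}_1\times\mathfrak{h}_1$, meeting along the two-dimensional node section $\{w=z_1=z_2=0\}$. In particular, every irreducible component of $\Theta^{red}$ is smooth.

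For transversality at a node, the Taylor expansion $\vartheta=a_1a_2\,z_1z_2+w\cdot g+(\text{higher order})$, with $a_i\ne 0$, shows that all $\tau$- and $z$-derivatives of $\vartheta$ vanish at the node, so the smoothness of $\Theta$ forces $\partial_w\vartheta\ne 0$ there. The implicit function theorem then produces local holomorphic coordinates $(\tau_1,\tau_2,z_1,z_2)$ on $\Theta$ near the node, in which the two components of $\Theta^{red}_\beta$ are cut out by the visibly transverse equations $z_1=0$ and $z_2=0$. Pulling back via $\phi_\beta$ handles arbitrary $\beta$. The main obstacle I anticipate is the characteristic bookkeeping: one must check that under $\phi_\beta^{-1}$ the characteristic $\delta=(1/2,1/2,1/2,1/2)$ transforms to some $\delta'$ that decomposes as $\delta_1'\oplus\delta_2'$ with both pieces odd, so that the product formula yields exactly the expected pair of first-order-vanishing factors. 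A secondary issue is that distinct irreducible components $\mathfrak{h}_{2,\beta}^{red}$ of $\mathfrak{h}_2^{red}$ may meet in $\mathfrak{h}_2$ at points parametrizing ppavs with extra product decompositions; at such points one would need to run an analogous multi-factor local analysis to confirm that the SNC condition persists.
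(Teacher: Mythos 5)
Your proof is correct and follows essentially the same route as the paper's: reduce via Proposition~\ref{proposition8} to $\Theta^{red}$, use the $\Sp_2(\ZZ)$-action to reduce to the standard component over $\mathfrak{h}_1\times\mathfrak{h}_1$, invoke the product formula~(\ref{thetaproduct}) at $w=0$ to split $\vartheta$ into a product of genus~1 Jacobi theta functions, observe each factor has simple zeros (hence each component of $\Theta^{red}_\beta$ is smooth), and then check transversality at a node. Where you diverge slightly is in the transversality step: the paper writes down the local factorization $\vartheta_{(1/2,1/2)}(\Omega_{11},z_1)\vartheta_{(1/2,1/2)}(\Omega_{22},z_2)=z_1z_2\,u$ on $\{w=0\}$ and says ``the claim follows,'' while you explicitly use the heat equation (or smoothness of $\Theta$) to get $\partial_w\vartheta\neq 0$ at the node, apply the implicit function theorem to produce coordinates $(\tau_1,\tau_2,z_1,z_2)$ on $\Theta$, and then identify the two components with coordinate hyperplanes. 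Your version is, if anything, a cleaner and more explicit treatment of why the union is SNC \emph{in} $\Theta$ rather than merely in the slice $\{w=0\}$.

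Two small remarks on the concerns you flagged. First, the characteristic bookkeeping is a non-issue: you do not need each $\delta_i'$ to be odd. All genus~1 Jacobi theta functions have only simple zeros; an even characteristic merely moves the zero from $z_j=0$ to a half-lattice point $p_j$, and the local equations $\{z_j=p_j\}$ still give coordinate hyperplanes, so the SNC conclusion is unchanged. Second, your worry about distinct components $\mathfrak{h}_{2,\beta}^{red}$ meeting in $\mathfrak{h}_2$ is unfounded in genus~2: the decomposition of a reducible principally polarized abelian surface into its two elliptic factors is unique (the factors are the irreducible components of the theta divisor), so each framed reducible ppav admits exactly one compatible homology splitting and the loci $\mathfrak{h}_{2,\beta}^{red}$ are pairwise disjoint. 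The paper asserts this disjointness at the end of its proof; you should state it as a fact rather than an open worry.
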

\begin{proof}
The isomorphism $\Theta\cong \C$ induces a biholomorphism $\Theta^{red}\cong \C^{red}$. We will prove that $\Theta^{red}$ has simple normal crossings. Our strategy is to prove the result for the locus $\Theta^{red}_{\h_1\times\h_1}$ lying over the connected component $\h_1\times \h_1\subset \h_2^{red}\subset \h_2$ and then use the action of $\Sp_2(\ZZ)$ on $\mathfrak{X}_2$ to transplant the result to other components of $\h_2^{red}$.

The locus $\Theta^{red}_{\h_1\times \h_1}$ is clearly an analytic subset of $\Theta$ of codimension 1; it is defined by the single  equation $f_{12} = 0$, where $f_{12}$ denotes the $\Omega_{12}$ component of the projection $f: \Theta\rightarrow \h_2$, viewed as a 3-tuple of holomorphic functions. We may identify $\Theta^{red}_{\h_1\times \h_1}$ with the zero locus of $\vartheta$ inside of $\mathfrak{X}_2|_{\h_1\times \h_1}$. When $\Omega_{12} = 0$, there is a product decomposition
\begin{equation}
\vartheta(\Omega,z) = \vartheta_{(1/2,1/2)}(\Omega_{11},z_1)\vartheta_{(1/2,1/2)}(\Omega_{22},z_2).
\end{equation}
Now $\Theta^{red}_{\h_1\times \h_1}$ is the union of the zero loci of $g_j(\Omega,z):=\vartheta_{(1/2,1/2)}(\Omega_{jj},z_j)$ inside of $\mathfrak{X}_2|_{\h_1\times \h_1}$. Both components are smooth because, in local coordinates $(\Omega,z)$ the gradients of the functions $\vartheta_{(1/2,1/2)}(\Omega_{jj},z_j)$ are non-vanishing along their zero loci. This is a consequence of the fact that the Jacobi theta function $\vartheta_{(1/2,1/2)}(\tau,x)$ has only simple zeros. Thus $\Theta^{red}_{\h_1\times \h_1}$ is the union of two smooth components. 

We now show that the locus where these components intersect is locally the union of the hyperplanes $z_1 = 0$ and $z_2 = 0$ inside of $\h_2\times \CC^2$. 
The two components meet along the locus $z_1=z_2 = 0$ in $\mathfrak{X}_2|_{\h_1\times \h_1}$. Since this is local problem, we will work in a sufficiently small neighborhood of $x = (\tau_1\oplus \tau_2, 0)\in \h_2\times \CC^2$, as this is a point in the preimage of the intersection of the two components. Locally around $x$ we have an identity
\begin{equation}
 \vartheta_{(1/2,1/2)}(\Omega_{11},z_1)\vartheta_{(1/2,1/2)}(\Omega_{22},z_2)  = z_1z_2\cdot u
\end{equation}
where $u$ is a non-vanishing holomorphic function. The claim now follows. 

Choose an element $M = \left(\begin{array}{cc}A & B \\C & D\end{array}\right)\in \Sp_2(\ZZ)$ such that $M$ carries the component $\h_1\times \h_1$ to $\h_{2,\beta}^{red}$. Then the automorphism of $\mathfrak{X}_2$ defined by 
\begin{equation}
[\Omega,z]\rightarrow [M\cdot \Omega, z\cdot(C\Omega+D)^{-1}]
\end{equation}
carries $\mathfrak{X}_2|_{\h_1\times \h_1}$ to $\mathfrak{X}_2|_{\h_{2,\beta}^{red}}$. The theta function $\vartheta_{\alpha}(\Omega,z)$ pulls back to 
\begin{equation}
\vartheta_{\alpha}(M\Omega,z\cdot (C\Omega+D)^{-1}) = \hat u\cdot \vartheta_{\alpha'}(\Omega,z)
\end{equation}
where $\hat u$ is a specific non-vanishing holomorphic function, and $\alpha'$ is a specific characteristic derived from $\alpha$ (see, for example, \cite[p.227]{birkenhake2004complex}). As we have argued, the zero locus of this function in $\mathfrak{X}_2|_{\h_1\times h_1}$ is a divisor with simple normal crossings. It follows that $\Theta^{red}_{\beta}$ is a divisor with simple normal crossings in $\Theta$. Since $\Theta^{red}$ is the union of all of the $\Theta^{red}_{\beta}$, all of which are disjoint, $\Theta^{red}$ is a divisor with simple normal crossings.
\end{proof}
Let $X_{\beta}$ denote a component of $\Theta^{red}_{\beta}$. The projection $\Theta\rightarrow \h_2$ restricts to a projection $X_{\beta}\rightarrow \h_{2,\beta}^{red}$. 
\begin{lemma}
The projection $X_{\beta}\rightarrow \h_{2,\beta}^{red}$ is a family of elliptic curves.  It is a topologically trivial bundle with fiber a 2-torus. 
 \end{lemma}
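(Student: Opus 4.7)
The plan is to reduce to the model component $\h_1 \times \h_1 \subset \h_2^{red}$ using the $\Sp_2(\ZZ)$-translation introduced in the proof of the previous proposition, and then to read the family structure of $X_\beta$ off directly from the product decomposition of $\vartheta$ already established there.

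Over $\h_1 \times \h_1$ one has
\[
\vartheta(\Omega,z) = \vartheta_{(1/2,1/2)}(\Omega_{11},z_1)\,\vartheta_{(1/2,1/2)}(\Omega_{22},z_2),
\]
so the two smooth components of $\Theta^{red}_{\h_1 \times \h_1}$ are cut out by $\vartheta_{(1/2,1/2)}(\Omega_{jj}, z_j) = 0$ for $j = 1, 2$. After relabeling I would take $X_\beta$ to be the $j = 1$ component. For each $\tau \in \h_1$ the Jacobi theta function $\vartheta_{(1/2,1/2)}(\tau, -)$ is odd and has a single simple zero in $E_\tau := \CC / \Lambda(\tau)$, which therefore occurs at $z = 0$. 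Hence the fiber of $X_\beta \to \h_1 \times \h_1$ over $(\tau_1, \tau_2)$ is $\{0\} \times E_{\tau_2}$, a smooth elliptic curve; flatness follows from the same equidimensional-fibers criterion used in the previous proposition. Thus $X_\beta \to \h_1 \times \h_1$ is a family of elliptic curves.

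For the topological claim, I would invoke the smooth trivialization $\mathfrak{X}_2 \cong \h_2 \times T^4$ furnished by the integral homology framing on $\mathfrak{X}_2$, which identifies $T^4 = T^2 \times T^2$ in a way compatible with the decomposition $(\Omega, z) \mapsto ((\tau_1, \tau_2), (z_1, z_2))$ over $\h_1 \times \h_1$. Under this trivialization the zero section $z_1 = 0$ is a constant section, being the fiberwise group identity, so $X_\beta$ restricted to $\h_1 \times \h_1$ is identified with $(\h_1 \times \h_1) \times \{0\} \times T^2$, a topologically trivial $T^2$-bundle.

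For an arbitrary component $\h_{2,\beta}^{red}$, choose $M \in \Sp_2(\ZZ)$ with $M \cdot (\h_1 \times \h_1) = \h_{2,\beta}^{red}$ and apply the fiber-preserving biholomorphism $[\Omega, z] \mapsto [M \cdot \Omega,\, z(C\Omega + D)^{-1}]$ of $\mathfrak{X}_2$. This sends the two components of $\Theta^{red}_{\h_1 \times \h_1}$ to the two components of $\Theta^{red}_\beta$ and acts $\ZZ$-linearly on each $T^4$-fiber, so both the family-of-elliptic-curves structure and the topological trivialization transfer. I do not anticipate any serious obstacle; the only detail requiring care is verifying that the zero of the odd Jacobi theta function corresponds to a \emph{constant} section of the smooth trivialization, which is automatic since it is the fiberwise group identity.
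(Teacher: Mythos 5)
Your proof is essentially correct and follows the same geometric skeleton as the paper --- work over the model component $\h_1 \times \h_1$, identify the fiber using the fact that $\vartheta_{(1/2,1/2)}(\tau,-)$ has its simple zero at $z = 0$, then transfer to arbitrary $\h_{2,\beta}^{red}$ by an element of $\Sp_2(\ZZ)$ --- but it differs in how the topological trivialization is produced. The paper establishes that $X_\beta \to \h_{2,\beta}^{red}$ is a proper submersion (exhibiting, for each point, a holomorphic section through it) and then invokes Ehresmann's theorem over the contractible base. You instead read off the trivialization directly from the global homology framing $\mathfrak{X}_2 \cong \h_2 \times T^4$: over $\h_1 \times \h_1$ the lattice $\Lambda(\Omega_{11} \oplus \Omega_{22})$ splits as a product, so $T^4 = T^2 \times T^2$ compatibly with the coordinates, and the locus $z_1 \equiv 0$ is the constant subtorus $\{0\} \times T^2$. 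This is a legitimate and arguably cleaner route; it also sidesteps the need to verify the submersion hypothesis separately. (The paper's approach does have the merit of handing over the family-of-nodal-curves structure at the same time via the submersion, whereas you appeal separately to equidimensionality for flatness and leave properness implicit --- properness is free, since $X_\beta$ is closed in $\Theta^{red}_\beta$, which is proper over $\h_{2,\beta}^{red}$, but it should be said.)

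The one place where your transfer step is imprecise is the claim that the $\Sp_2(\ZZ)$-biholomorphism ``sends the two components of $\Theta^{red}_{\h_1\times\h_1}$ to the two components of $\Theta^{red}_\beta$.'' Pulling $\vartheta$ back under $[\Omega,z] \mapsto [M\cdot\Omega,\, z(C\Omega+D)^{-1}]$ yields $\hat u \cdot \vartheta_{\alpha'}$ for some \emph{other} characteristic $\alpha'$, so what actually maps onto $\Theta^{red}_\beta$ is the zero locus of $\vartheta_{\alpha'}$ over $\h_1\times\h_1$, not $\Theta^{red}_{\h_1\times\h_1}$ itself. This is not a serious obstacle --- over $\h_1\times\h_1$, $\vartheta_{\alpha'}$ still factors as a product of two genus-$1$ theta functions, each vanishing at a $2$-torsion point $\epsilon'_j + \epsilon''_j\Omega_{jj}$, and such a point is \emph{constant} in the framing trivialization since it maps to $(\epsilon'_j,\epsilon''_j) \in T^2$. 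So the zero locus of $\vartheta_{\alpha'}$ over $\h_1\times\h_1$ is again a constant pair of subtori, and your framing argument goes through verbatim. You should just make explicit that the argument over $\h_1\times\h_1$ is being run for an arbitrary characteristic, not only for $\delta = (1/2,1/2,1/2,1/2)$.
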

\begin{proof}

The idea is to show that Ehremann's Theorem \cite{lamotke1981topology} is applicable. First observe that $\Theta^{red}_{\beta}\rightarrow \h_{2,\beta}^{red}$ is proper. Since $X_{\beta}$ is a closed subset of $\Theta^{red}_{\beta}$, it follows that $X_{\beta}\rightarrow \h_{2,\beta}^{red}$ is proper. We now argue that $X_{\beta}\rightarrow \h_{2,\beta}^{red}$ is a submersion. For simplicity, we work with a component $X_{\h_1\times \h_1}$ over $\h_1\times \h_1$ of $\h_2^{red}$. The argument can be transferred to the other components of $\h_2^{red}$ by symmetry. Since the problem is local, we work with the component $Z$ of the zero locus of the holomorphic function $f(\Omega_{11}\oplus \Omega_{22},z) = \vartheta_{(1/2.1/2)}(\Omega_{11},z_1)$ in $\h_1\times \h_1\times \CC^2$ all of whose points are of the form $(\Omega_{11}\oplus \Omega_{22}, 0 ,x)$ where $x\in \CC$. 

Fix a point $P = (\tau_1\oplus\tau_2, 0 ,x)\in Z$. Define a map $\h_1\times \h_1\rightarrow \h_1\times \h_1\times \CC^2$ by 
$$\Omega_{11}\oplus\Omega_{22}\rightarrow (\Omega_{11}\oplus\Omega_{22}, 0, x).$$
This gives a section of the projection $Z\rightarrow \h_1\times \h_1$ passing through the point $P$. This implies that $Z\rightarrow \h_1\rightarrow \h_1$ is a submersion, and hence that the projection $X_{\beta}\rightarrow \h_{2,\beta}^{red}$ is a submersion. 

Observe that the fiber in $X_{\beta}$ over a point $\tau_1\oplus\tau_2\in \h_1\times \h_1$ is the zero locus of the function $\vartheta_{(1/2,1/2)}(\tau_1,z_1)$ in the complex torus $\CC^2/\Lambda(\tau_1\oplus\tau_2)$. This is just the subtorus $\CC/\Lambda(\tau_2)$ of $\CC^2/\Lambda(\tau_1\oplus \tau_2)$ consisting of points of the form $(0,x)$. 

An application of Ehresmann's Theorem shows that $X_{\h_1\times \h_1}\rightarrow \h_1\times \h_1$ is a trivial fiber bundle with standard fiber a 2-torus.

\end{proof}

\begin{remark}
\emph{
It should also be noted that the singular locus $\Theta^{red,sing}$ of $\Theta^{red}$ is precisely the locus along which the components of $\Theta^{red}$ intersect. Each irreducible component of $\Theta^{red,sing}$ is isomorphic to $\h_1\times\h_1$. 
%%%edit%%%
The projection $\Theta\rightarrow \h_2$ induces a submersion of the unique component of $\Theta^{red,sing}_{\beta}$ lying over $\h_{2,\beta}^{red}$ onto $\h_{2,\beta}^{red}$.
}
\end{remark}
In a later section, we will invoke certain results from stratification theory (see \cite{goresky1988stratified}). As a consequence of our work so far we state the following proposition.
\begin{proposition}
The filtration $\Theta\supset \Theta^{red}\supset \Theta^{red,sing}$ induces a Whitney stratification of $\Theta$. With respect to this stratification and the Whitney stratification of $\h_2$ induced by $\h_2\supset \h_2^{red}$, the projection $\Theta\rightarrow \h_2$ is a stratified submersion.
\end{proposition}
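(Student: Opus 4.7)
The plan is to verify separately (i) that the indicated filtration defines a Whitney stratification of $\Theta$ and (ii) that the projection $f : \Theta \to \h_2$ restricts to a submersion on every stratum, landing in a stratum of the target filtration.

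For (i), the strata are
\[
\Theta - \Theta^{red}, \qquad \Theta^{red} - \Theta^{red,sing}, \qquad \Theta^{red,sing}.
\]
The top stratum is open in the complex manifold $\Theta$, hence smooth. Because $\Theta^{red}$ is a simple normal crossings divisor by the preceding proposition, its regular locus $\Theta^{red} - \Theta^{red,sing}$ is a disjoint union of open pieces of the smooth components of $\Theta^{red}$, so it is smooth. Finally, the preceding remark identifies each irreducible component of $\Theta^{red,sing}$ with $\h_1 \times \h_1$, hence smooth as well. To obtain the Whitney conditions (A) and (B) between each pair of strata, I would invoke the standard fact that the canonical stratification of a simple normal crossings divisor in a complex manifold by the depth of intersection is automatically Whitney (since locally the whole picture is a union of coordinate hyperplanes, for which the Whitney conditions are trivially verifiable from coordinate calculations). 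This reduces all three pairs of incidences to the local normal form $\vartheta \sim z_1 z_2 \cdot u$ already exhibited near the intersection locus in the proof of the previous proposition, and the corresponding local models away from $\Theta^{red,sing}$.

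For (ii), I need to check that $f$ maps each stratum of $\Theta$ submersively onto the appropriate stratum of the $\h_2 \supset \h_2^{red}$ stratification. The top stratum $\Theta - \Theta^{red}$ projects to $\h_2 - \h_2^{red}$, and this is a submersion because $f$ is a flat family of smooth curves over this locus (the fibers are smooth genus 2 curves with everywhere non-vanishing differential, by the earlier lemma showing $\Theta$ is smooth and the fibers are equidimensional). For the middle stratum, I would use the previous lemma identifying each component $X_\beta$ with a trivial bundle over $\h_{2,\beta}^{red}$ with torus fiber; after removing the intersection locus $\Theta^{red,sing}$ (which is closed in $X_\beta$), the restriction is still a bundle projection, hence a submersion onto $\h_{2,\beta}^{red}$. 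For the deepest stratum, the remark preceding the statement already records that $\Theta^{red,sing}_\beta \to \h_{2,\beta}^{red}$ is a submersion; one just notes it surjects onto the corresponding stratum.

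The main potential obstacle is the verification of the Whitney conditions at the deepest stratum $\Theta^{red,sing}$, where three strata meet. However, the local model $\vartheta = z_1 z_2 \cdot u$ with $u$ nonvanishing, combined with the trivialisations coming from the product decomposition on $\h_{2,\beta}^{red}$, reduces this to the standard coordinate-hyperplane model $\{z_1 z_2 = 0\} \subset \CC^n$ (with the remaining coordinates parametrising the base), for which conditions (A) and (B) are a routine check. Transporting the verification across components of $\h_2^{red}$ via the $\Sp_2(\ZZ)$-action on $\mathfrak{X}_2$ (as was already done in the proof of the simple normal crossings statement) completes the argument.
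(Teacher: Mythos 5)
The paper states this proposition without proof, remarking only that it is ``a consequence of our work so far.'' Your proposal supplies the natural argument that the paper has in mind, and it matches almost step for step what the paper does explicitly later for the local version of this statement (its Lemma~26, which stratifies $\Theta_{\overline{N}_\beta}$ and cites \cite[pp.\,36--37, p.\,236]{goresky1988stratified} for the fact that a simple normal crossings divisor, being locally a union of coordinate hyperplanes, gives a Whitney stratification). Your treatment of part (ii) is also in line with the preceding lemmas: the top stratum is the universal family over Torelli space, which is a fiber bundle; the middle stratum is handled by restricting the bundle $X_\beta\to\h_{2,\beta}^{red}$ to the open complement of $\Theta^{red,sing}$ (one small wording quibble: the restriction is trivially a \emph{submersion}, whether or not one bothers to argue it is again a bundle projection); and the deepest stratum is covered by the remark preceding the proposition. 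In short, your proof is correct and is essentially the paper's intended argument.
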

\subsection{Geometry of the Universal Cover of $\Theta$}

By Mess's work \cite{mess1992torelli}, $\cT_2$ is homotopy equivalent to an infinite bouquet of circles. The generators of $\pi_1(\cT_2,p)$ have the following description. Identify $\cT_2$ with $\h_2-\h_2^{red}$ via the period map. Let $\Delta_{\beta}$ be a small holomorphic disk in $\h_2$ that is transverse to $\h_{2,\beta}^{red}$ at $p_{\beta}$. 
Then for each $\beta$ there exists a continuous path $\gamma_{\beta}$ from $\Omega_0$ to a point on the boundary of $\Delta_{\beta}$ such that the oriented loops
\begin{equation}
\gamma_{\beta}^{-1}\cdot \partial \Delta_{\beta}\cdot \gamma_{\beta}
\end{equation}
freely generate $\pi_1(\h_2-\h_2^{red},\Omega_0)$. The Torelli group $T_2$ is freely generated by the monodromy of $\C-\C^{red}$ around these loops.
\\\\\indent 
Let us now fix some notation. These conventions will be in effect for the rest of the paper. Let $\pi = \pi_1(S_2,*)$ denote the fundamental group of the reference surface $S_2$ of genus 2. Let $\pi'\vartriangleleft \pi$ denote the commutator subgroup, and define $H: =\pi/\pi'\cong H_1(S_2,\ZZ)$ denote the abelianization of $\pi$.

\begin{lemma}
The embedding $\cC\hookrightarrow \mathfrak{X}_2$ induces an isomorphism on fundamental groups $\pi_1(\cC,*)\rightarrow \pi_1(\mathfrak{X}_2,*)$.
\end{lemma}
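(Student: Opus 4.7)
The plan is to show surjectivity and injectivity of $\iota_{*}:\pi_{1}(\cC,*)\to \pi_{1}(\mathfrak{X}_{2},*)$ separately, with the basepoint chosen on a smooth fiber $\cC_{\Omega_{0}}$ for some $\Omega_{0}\in\cT_{2}$.

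\emph{Surjectivity.} Since $\mathfrak{X}_{2}\to \h_{2}$ is a topologically trivial $T^{4}$-bundle over the contractible base $\h_{2}$, the inclusion $A_{\Omega_{0}}\hookrightarrow \mathfrak{X}_{2}$ of any fiber is a homotopy equivalence, so $\pi_{1}(\mathfrak{X}_{2})\cong \ZZ^{4}$. The smooth curve $\cC_{\Omega_{0}}$ sits in $A_{\Omega_{0}}$ as a theta divisor, inducing on $\pi_{1}$ the abelianization $\pi\twoheadrightarrow H_{1}(S_{2},\ZZ)=\ZZ^{4}$. Following the commutative square $\pi_{1}(\cC_{\Omega_{0}})\to \pi_{1}(\cC)\xrightarrow{\iota_{*}}\pi_{1}(\mathfrak{X}_{2})$ shows that $\iota_{*}$ is surjective.

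\emph{Injectivity.} The idea is to compute $\pi_{1}(\cC)$ using Mess's Theorem~\ref{theorem 9} and van Kampen applied to $\cC^{red}\subset \cC$. The restriction $\cC'=\cC|_{\cT_{2}}\to \cT_{2}$ is a smooth $S_{2}$-bundle, and by Mess $\cT_{2}\simeq \bigvee_{\beta}S^{1}$, so $\pi_{2}(\cT_{2})=0$ and $\pi_{1}(\cT_{2})=T_{2}$ is free on the Dehn twists $T_{c_{\beta}}$. The homotopy long exact sequence of the bundle yields a short exact sequence
$$1\to \pi\to \pi_{1}(\cC')\to T_{2}\to 1.$$
Because $\cC^{red}$ is a simple normal crossings divisor with two smooth components $X_{\beta,1},X_{\beta,2}$ over each stratum $\h_{2,\beta}^{red}$, van Kampen gives
$$\pi_{1}(\cC)\ \cong\ \pi_{1}(\cC')\big/\langle\!\langle m_{\beta,1},\, m_{\beta,2}\,:\,\beta\rangle\!\rangle,$$
where $m_{\beta,i}$ is a meridian around $X_{\beta,i}$ based at a common point on a nearby smooth fiber.

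Each $m_{\beta,i}$ projects to $T_{c_{\beta}}\in T_{2}$, so imposing $m_{\beta,1}=1$ for all $\beta$ kills the image of $T_{2}$ and exhibits $\pi_{1}(\cC)$ as a quotient of $\pi$. The remaining relations $m_{\beta,1}^{-1}m_{\beta,2}\in \pi$ are identified via a local Picard--Lefschetz analysis near a node: in the local model $\{xy=s\}$ with $X_{\beta,1}=\{x=0\}$ and $X_{\beta,2}=\{y=0\}$, the smooth locus of the family is $(\CC^{*})^{2}$ with $\pi_{1}=\ZZ^{2}$ generated by the two meridians, and the combination $m_{\beta,1}^{-1}m_{\beta,2}$ coincides (up to sign) with the class of the vanishing cycle in the smooth fiber $\{xy=s_{0}\}$. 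In the smooth $S_{2}$-fiber this vanishing cycle is the separating curve $c_{\beta}$. Conjugating by lifts of elements of $T_{2}$ (which acts transitively on SSCCs within each homology splitting class) sweeps out the class of every separating simple closed curve on $S_{2}$. These classes $\pi$-normally generate the commutator subgroup $\pi'$: the simple tensors $u\wedge v$ arising from principally polarized symplectic splittings span $\Lambda^{2}H/\ZZ\omega=\pi'/\pi''$, and chasing the resulting relations in $\pi$ itself (as in the identity $a_{1}(b_{1}a_{2})a_{1}^{-1}(b_{1}a_{2})^{-1}=1$ combined with $[a_{1},b_{1}]=1$) kills every basic commutator of symplectic generators. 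Therefore $\pi_{1}(\cC)\cong \pi/\pi'=\ZZ^{4}$, and this isomorphism is compatible with $\iota_{*}$.

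The main obstacle is the explicit Picard--Lefschetz identification of $m_{\beta,1}^{-1}m_{\beta,2}$ with the class $[c_{\beta}]\in \pi$, together with verifying that the $\pi$-normal closure of \emph{all} SSCC classes is exactly $\pi'$ rather than a strictly smaller normal subgroup --- a single SSCC is insufficient since $\pi/\langle\!\langle [a_{1},b_{1}]\rangle\!\rangle=\ZZ^{2}*\ZZ^{2}$, and one must exploit the full family of homology splittings afforded by Mess's theorem.
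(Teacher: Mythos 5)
Your argument follows the same broad strategy as the paper's --- prove surjectivity via a smooth fiber (the theta-divisor embedding induces abelianization on $\pi_1$, exactly as you say), then show $\pi_1(\cC',*)\to\pi_1(\cC,*)$ factors through the abelianization $H$ of $\pi$ --- but packages the second half differently. The paper does not invoke van Kampen or the Picard--Lefschetz local model explicitly; it instead shows directly that $\pi'\rtimes T_2\subset T_{2,1}\cong\pi\rtimes T_2$ lies in the kernel of $\pi_1(\cC')\to\pi_1(\cC)$: the $T_2$ factor dies ``because $\h_2$ is contractible'' (a section of $\cC\to\h_2$ being available over the whole of $\h_2$), and $\pi'$ dies because by Mess's monodromy description every SSCC on a fiber is freely homotopic in $\cC$ to a vanishing cycle, which is nullhomotopic. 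The paper then closes the argument with a finitely-generated-abelian-group trick --- the composition $H\twoheadrightarrow\pi_1(\cC)\to\pi_1(\mathfrak{X}_2)\cong H$ is a surjective endomorphism of $H$, hence an isomorphism --- rather than computing $\pi_1(\cC)$ on the nose. Your meridian relations $m_{\beta,1}=1$ are a cleaner and more self-contained reason for killing the $T_2$ image than the paper's appeal to contractibility of $\h_2$, and your Picard--Lefschetz identification of $m_{\beta,1}^{-1}m_{\beta,2}$ with the vanishing cycle is precisely the local geometry underlying the paper's free-homotopy claim. The fact you flag as the main obstacle --- that the $\pi$-normal closure of all SSCC classes equals $\pi'$ --- is indeed used by the paper too (``any element of $\pi'$ can be written as a product of separating simple closed curves''), asserted without proof; it is classical and can be verified by Hall--Witt manipulations of exactly the kind appearing in the paper's Section 7. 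One small correction: $\pi'/\pi''$ is an infinitely generated free abelian group (it is $H_1(S_2^{ab})$), not $\Lambda^2 H/\ZZ\omega$; the latter is $\pi'/[\pi,\pi']$, and that is the quotient your Hall--Witt sketch actually controls, which is enough for the argument.
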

\begin{proof}
Consider the universal family $\C'\rightarrow \cT_2$ (see \cite{arbarello2011geometry}, p. 461).  This is a $C^{\infty}$ fiber bundle $S_2\rightarrow \C'\rightarrow \cT_2$. The fundamental group of $\C'$ is isomorphic to the pointed Torelli group $T_{2,1}$ (see \cite{farb2011primer}). The only non-trivial segment of the associated long exact sequence of homotopy groups is
\begin{equation}\label{exactsequencetorelli}
1\rightarrow \pi\rightarrow T_{2,1}\rightarrow T_2\rightarrow 1
\end{equation}
%
%
%
%
%
%%%%MAking a change here%%%%
Since $T_2$ is free by \cite{mess1992torelli}, the exact sequence (\ref{exactsequencetorelli}) splits. This allows us to view $T_2$ as a subgroup of $T_{2,1}$ and $T_{2,1}$ as a semi-direct product. 

Observe that the inclusion $\C'\hookrightarrow \mathfrak{X}_2$ factors as $\C'\hookrightarrow \C\hookrightarrow \mathfrak{X}_2$. Notice also that this inclusion induces a surjection $\pi_1(\C',*)\rightarrow \pi_1(\mathfrak{X}_2,*)$. This latter statement follows from the fact that the fibers of $\C'$ are embedded in the fibers of $\mathfrak{X}_2$ as theta divisors. 

Consider the homomorphism $i:\pi_1(\C',*)\rightarrow \pi_1(\C,*)$ induced by inclusion. This is a surjection, since $\C'$ is the complement in $\C$ of the normal crossings divisor $\C^{red}$. Elements of $T_2$, viewed as a subgroup of $T_{2,1}$, lie in the kernel of $i$ because $\h_2$ is contractible. Elements in the commutator subgroup $\pi'\vartriangleleft \pi\vartriangleleft T_{2,1}$ also lie in the kernel of $i$ because, by Mess's description of the monodromy, any separating simple closed curve on any fiber of $\C'$ is freely homotopic to a vanishing cycle of $\C$. Since the vanishing cycles are themselves nullhomotopic, it follows that any separating simple closed curve on a fiber of $\C'$ is as well. Since any element of $\pi'$ can be written as a product of separating simple closed curves, it follows that $\pi'$ lies in the kernel. Thus the subgroup $\pi' \rtimes T_2$ of  $T_{2,1}$ lies in the kernel of $i$.

There is a surjective homomorphism $\pi\rtimes T_2\rightarrow \pi^{ab}\cong H$ defined by $ct\rightarrow \overline{c}$, where $t\in T_2$ and $c\in \pi$. The kernel of this homomorphism is precisely $\pi'\rtimes T_2$. This induces an isomorphism $(\pi\rtimes T_2)/(\pi'\rtimes T_2)\cong H$. Thus we have obtained a sequence of group homomorphisms
\begin{equation}
H\rightarrow \pi_1(\C,*)\rightarrow \pi_1(\mathfrak{X}_2,*)
\end{equation}
such that the composition $H\rightarrow \pi_1(\mathfrak{X}_2,*)\cong H$ is surjective. Since $H$ is a finitely generated abelian group, this composition is actually an isomorphism. Furthermore, since $H\rightarrow \pi_1(\C,*)$ is surjective, the homomorphism $\pi_1(\C,*)\rightarrow H$ is injective, hence it is an isomorphism. 
\end{proof}
We shall call upon the following corollary in a later section.
\begin{corollary}
The inclusion $\C_{\Omega_0}\hookrightarrow \C$ induces an isomorphism $H_1(\C_{\Omega_0})\xrightarrow{\cong} H_1(\C)$, where $\C_{\Omega_0}$ is any fiber.
\end{corollary}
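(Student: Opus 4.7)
The plan is to combine the lemma just proved (that $\cC \hookrightarrow \mathfrak{X}_2$ induces an isomorphism on $\pi_1$, and hence on $H_1$) with the classical fact that a theta divisor in a principally polarized abelian surface induces an isomorphism on first integral homology with the ambient surface. This reduces the entire statement to a diagram chase.

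Concretely, I would factor the inclusion of the fiber as
\begin{equation*}
\cC_{\Omega_0}\hookrightarrow \mathfrak{X}_{2,\Omega_0}\hookrightarrow \mathfrak{X}_2
\end{equation*}
and also as $\cC_{\Omega_0}\hookrightarrow \cC\hookrightarrow \mathfrak{X}_2$. The first inclusion in the first factorization identifies $\cC_{\Omega_0}$ with the zero locus of $\vartheta(\Omega_0,-)$ in the abelian surface $\CC^2/\Lambda(\Omega_0)$, i.e.\ with a theta divisor. Whether $\Omega_0$ is in $\h_2^{red}$ or not, the theta divisor is either a smooth genus $2$ curve embedded in its Jacobian (by the discussion in Section \ref{section2.4.2}) or two elliptic curves joined at a node embedded in $E_1\times E_2$ as $E_1\times\{p_2\}\cup \{p_1\}\times E_2$; in both cases a direct computation shows that the induced map on $H_1$ is an isomorphism onto $H \cong \ZZ^4$. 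The second inclusion $\mathfrak{X}_{2,\Omega_0}\hookrightarrow \mathfrak{X}_2$ is an isomorphism on $H_1$ because $\mathfrak{X}_2\rightarrow \h_2$ is a topologically trivial torus bundle over the contractible base $\h_2$. Consequently the composition $H_1(\cC_{\Omega_0})\rightarrow H_1(\mathfrak{X}_2)$ is an isomorphism.

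By the preceding lemma, the second factorization $\cC\hookrightarrow \mathfrak{X}_2$ already induces an isomorphism on $H_1$ (it is an isomorphism on $\pi_1$, and both groups are isomorphic to $H$, which is abelian). Chasing the commutative triangle
\begin{equation*}
H_1(\cC_{\Omega_0})\rightarrow H_1(\cC)\rightarrow H_1(\mathfrak{X}_2),
\end{equation*}
in which the second arrow and the composition are both isomorphisms, forces the first arrow $H_1(\cC_{\Omega_0})\rightarrow H_1(\cC)$ to be an isomorphism as well. The only step that is not entirely formal is the verification that the theta divisor, including in the reducible case, maps isomorphically to $H_1$ of its Jacobian; this is where the explicit description of $\cC_{\Omega_0}$ and of the theta divisor of a product ppav from Section \ref{section2.4.2} is used.
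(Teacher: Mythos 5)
Your argument is correct and matches the route the paper implicitly intends (the paper states this as a corollary of the preceding lemma with no separate proof). You factor $\cC_{\Omega_0}\hookrightarrow\cC\hookrightarrow\mathfrak{X}_2$ and $\cC_{\Omega_0}\hookrightarrow\mathfrak{X}_{2,\Omega_0}\hookrightarrow\mathfrak{X}_2$, observe that the theta-divisor inclusion is an $H_1$-isomorphism for both smooth and nodal fibers, that the fiber of the trivial torus bundle $\mathfrak{X}_2\rightarrow\h_2$ includes as an $H_1$-isomorphism, and that the lemma gives $H_1(\cC)\cong H_1(\mathfrak{X}_2)$; two of the three maps in the resulting triangle are isomorphisms, forcing the third. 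This is exactly the content the author is leaning on, and your explicit treatment of the reducible case ($E_1\times\{p_2\}\cup\{p_1\}\times E_2$ via Mayer--Vietoris) is the right way to justify the word ``any'' in the statement.
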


We are now in a position to describe the geometry of the universal cover $\chat$ of $\C$.  We will do this by examining the geometry of the universal cover $\Thetatilde$ of $\Theta$. As a corollary of the preceding proposition, we see that the inclusion $\Theta\hookrightarrow \mathfrak{X}_2$ induces an isomorphism of fundamental groups. From this it follows that the preimage of $\Theta$ in $\h_2\times \CC^2$, i.e. the zero locus of $\vartheta_{\alpha}$ in $\h_2\times \CC^2$, is a connected covering space of $\Theta$ with covering group $H$. In other words, it is the universal cover of $\Theta$. We will denote this space by $\widetilde\Theta$. It follows that there is an isomorphism $\chat\rightarrow \Thetatilde$ and that it respects the fibers of the projections $\chat\rightarrow \C\rightarrow \h_2$ and $\Thetatilde\rightarrow \Theta\rightarrow \h_2$. We record this as 
\begin{lemma}
There is a biholomorphism $\chat\rightarrow \Thetatilde$ which preserves the fibers of the projections $\chat\rightarrow \h_2$ and $\Thetatilde\rightarrow \h_2$.
\end{lemma}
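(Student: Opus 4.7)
The plan is to lift the biholomorphism of framed families $\Theta \xrightarrow{\cong} \C$ of Proposition \ref{proposition8} to a biholomorphism of their universal covers, and to observe that fiber-preservation over $\h_2$ is automatic from this. As the paragraph immediately preceding the statement indicates, the substantive content is the identification of $\Thetatilde$ with the zero locus of $\vartheta$ inside $\h_2 \times \CC^2$; everything else is formal covering-space theory.

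First I would make that identification precise. By the preceding lemma, $\C \hookrightarrow \mathfrak{X}_2$ induces an isomorphism on $\pi_1$, and transporting along Proposition \ref{proposition8} gives the same for $\Theta \hookrightarrow \mathfrak{X}_2$; in particular $\pi_1(\Theta,*) \cong \pi_1(\mathfrak{X}_2,*) \cong H$. Now $\h_2 \times \CC^2 \to \mathfrak{X}_2$ is the universal cover, with deck group $H \cong \ZZ^4$ acting by $(m_1,m_2) \cdot (\Omega, z) = (\Omega, z + m_1 + m_2 \Omega)$, and the preimage of $\Theta$ under this quotient is exactly the zero locus of $\vartheta$ in $\h_2 \times \CC^2$. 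Its connected components are indexed by the cosets $\pi_1(\Theta,*) \backslash \pi_1(\mathfrak{X}_2,*)$, which reduces to a single point by the $\pi_1$-isomorphism. Hence the preimage is a connected cover of $\Theta$ with deck group the full $H \cong \pi_1(\Theta,*)$, so it is the universal cover $\Thetatilde$.

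With $\Thetatilde$ so identified, I would apply the lifting criterion to the composition $\Thetatilde \to \Theta \xrightarrow{\cong} \C$: since $\Thetatilde$ is simply connected, this lifts (after a choice of basepoints) to a holomorphic map $\Thetatilde \to \chat$, and the analogous lift of the inverse biholomorphism supplies a two-sided holomorphic inverse. Fiber-preservation over $\h_2$ is then automatic: Proposition \ref{proposition8} is already an isomorphism of families over $\h_2$, so the two projections $\Thetatilde \to \h_2$ and $\chat \to \h_2$ agree under the identification. The only non-formal step, and thus the main obstacle, is the connectedness of the preimage, which is extracted cleanly from the $\pi_1$-isomorphism in the preceding lemma; without that input, the preimage could a priori split into several copies of some intermediate cover, and the biholomorphism would be with a disjoint union rather than with $\chat$ itself.
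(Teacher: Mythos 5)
Your proof is correct and follows essentially the same route as the paper: both identify $\Thetatilde$ with the preimage of $\Theta$ in $\h_2 \times \CC^2$ by using the $\pi_1$-isomorphism $\pi_1(\Theta) \cong \pi_1(\mathfrak{X}_2) \cong H$ (pulled through $\Theta \cong \C$ from the preceding lemma) to see that this preimage is connected and hence the universal cover, and then deduce the fiber-preserving biholomorphism $\chat \to \Thetatilde$ from the family isomorphism of Proposition \ref{proposition8}. You spell out the covering-space bookkeeping (components indexed by cosets, lifting criterion, two-sided inverse) a bit more explicitly than the paper, which simply asserts that the isomorphism follows, but the substance is identical.
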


Let us now describe the fibers of $\chat\rightarrow \h_2$ in some detail.  Since the inclusion of a fiber $\C_{\Omega}\hookrightarrow \chat$ induces a surjection on the level of fundamental groups, it follows that the preimage of $\C_{\Omega}$ in $\chat$ is connected. Since this preimage is precisely the fiber $\chat_{\Omega}$ of $\chat$ over $\Omega$. It follows that the fibers of $\chat_{\Omega}$ are connected. Since the covering $\chat\rightarrow \C$ is an $H$-covering, it follows that the induced map $\chat_{\Omega}\rightarrow \C_{\Omega}$ is an $H$-covering. Since the fundamental group $\pi_1(\C_{\Omega},*)$ is a finitely generated group with abelianization $H$, it follows that $\chat_{\Omega}$ is the universal abelian covering of $\C_{\Omega}$. We record this in the following
\begin{lemma}
The fiber $\chat_{\Omega}$ of $\chat$ over $\Omega\in \h_2$ is the universal abelian cover of the fiber $\C_{\Omega}$ of $\C$ over $\Omega$. Concretely, the fiber $\chat_{\Omega}$ is isomorphic to the zero locus of the theta function $\vartheta_{\alpha}(\Omega,-):\CC^2\rightarrow \CC$. 
\end{lemma}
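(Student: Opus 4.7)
The plan is to deduce both statements of the lemma from the fiber-preserving biholomorphism $\chat \to \Thetatilde$ already established, together with a covering-space argument for the first claim. I would begin with the concrete description, which is essentially tautological: the biholomorphism $\chat \to \Thetatilde$ is fiber-preserving over $\h_2$, so it restricts to a biholomorphism $\chat_\Omega \xrightarrow{\cong} \Thetatilde_\Omega$. Since $\Thetatilde$ was identified as the zero locus of $\vartheta_{\alpha}$ in $\h_2\times \CC^2$, its fiber over $\Omega$ is by definition the zero locus of $\vartheta_{\alpha}(\Omega,-):\CC^2\rightarrow \CC$.

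For the covering-space claim, I would use that $\chat\rightarrow \C$ is the universal cover with deck group $\pi_1(\C,*)\cong H$ (by the lemma just proved). The first step is to show that $\chat_{\Omega}$ is connected, which, by the standard covering-space criterion, is equivalent to showing that $\pi_1(\C_{\Omega},*)\rightarrow \pi_1(\C,*)\cong H$ is surjective. This follows immediately from the preceding corollary, which asserts that $H_1(\C_{\Omega})\rightarrow H_1(\C)\cong H$ is an isomorphism: since the target $H$ is abelian, the map on $\pi_1$ factors through $H_1(\C_{\Omega})$ and is therefore surjective.

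Once connectedness is in hand, $\chat_{\Omega}\rightarrow \C_{\Omega}$ is a regular $H$-cover corresponding to the subgroup $K := \ker(\pi_1(\C_{\Omega},*)\rightarrow H)$. To identify $K$ with the commutator subgroup, I would note that $\pi_1(\C_{\Omega},*)/K\cong H$ is abelian, so $K$ contains $[\pi_1(\C_{\Omega}),\pi_1(\C_{\Omega})]$; conversely, the induced surjection $H_1(\C_{\Omega})\twoheadrightarrow H$ is a surjection of finitely generated abelian groups of rank $4$ (true for both the smooth case and the compact-type nodal case, where $\pi_1\cong \ZZ^2*\ZZ^2$ has abelianization $\ZZ^4$), hence an isomorphism, forcing $K$ to equal the commutator subgroup. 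This exhibits $\chat_\Omega\to \C_\Omega$ as the universal abelian cover.

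The only point requiring genuine care is the uniform treatment of smooth and singular fibers. For singular fibers of compact type, $\pi_1(\C_\Omega)$ is no longer a surface group, so one must verify the rank-$4$ claim from the explicit structure of such curves. This is not a serious obstacle — it is forced by the cited corollary applied pointwise — but it is the step where an unwary argument specific to smooth surfaces would break down.
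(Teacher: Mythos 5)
Your proposal is correct and follows essentially the same route as the paper: use the fiber-preserving biholomorphism $\chat\to\Thetatilde$ for the concrete description, then use the covering-space criterion (connectedness of the preimage follows from surjectivity of $\pi_1(\C_\Omega)\to\pi_1(\C)\cong H$, deduced from the preceding corollary) to identify $\chat_\Omega\to\C_\Omega$ as the universal abelian cover. One small redundancy: once you invoke the corollary that $H_1(\C_\Omega)\to H_1(\C)\cong H$ is an isomorphism, you already know the kernel of $\pi_1(\C_\Omega)\to H$ is exactly the commutator subgroup; the separate rank-4 counting argument (and the explicit $\ZZ^2*\ZZ^2$ computation for nodal fibers) is harmless extra verification but not needed, since the cited corollary is stated for \emph{any} fiber, smooth or singular.
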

Now define $\Thetatilde^{red}$ to be the preimage of $\Theta^{red}$ under the universal covering $\Thetatilde\rightarrow \Theta$. Let $\Thetatilde^{red}_{\beta}$ denote the preimage of $\Theta^{red}_{\beta}$ in $\Thetatilde$. Then $\Thetatilde^{red}_{\beta}$ is a divisor in  $\Thetatilde$ with simple normal crossings. It is easiest to see this by considering the locus $\Thetatilde^{red}_{\h_1\times \h_1}$ over $\h_1\times \h_1$. This is isomorphic to the zero locus of $\vartheta_{\alpha}(\Omega_{11},z_1)\vartheta_{\alpha}(\Omega_{22},z_2)$ inside of $\h_1\times \h_1\times \CC^2$. This is the subvariety of $\h_1\times \h_1\times\CC^2$ given by the union of the components $D_1(m,n)$ and $D_2(m,n)$, where $m,n\in \ZZ$, with equations
\begin{equation}
D_j(m,n) = \{(\Omega_{11},\Omega_{22}, z_1,z_2): z_j = m+n\Omega_{jj}\}
\end{equation}
It is a divisor with simple normal crossings, and each irreducible component is isomorphic to $\h_1\times \h_1\times \CC$. Note that, for fixed $j$, the components $D_j(m,n)$ are acted upon freely and transitively by $\pi_1(\mathfrak{X}_2,*)/\pi_1(X_j,*)\cong \ZZ^2$, where $X_j$ is the component of $\Theta^{red}_{\h_1\times\h_1}$ whose preimage is the union of the $D_j(m,n)$.

By symmetry, the other loci $\Thetatilde^{red}_{\beta}$ are seen to have an essentially identical description. 

Also note that, much in the same way that the projection $\Theta\rightarrow \h_2$ restricts to a submersion $X_{\beta}\rightarrow \h_{2,\beta}^{red}$, the projection $\Thetatilde\rightarrow \h_2$ maps each irreducible component of $\Thetatilde^{red}_{\beta}$ submersively onto $\h_{2,\beta}^{red}$. Additionally, each component of the singular locus $\Thetatilde^{red,sing}_{\beta}$ of $\Thetatilde^{red}_{\beta}$ is isomorphic to $\h_1\times \h_1$ and mapped submersively onto $\h_{2,\beta}^{red}$ by the projection $\Thetatilde\rightarrow \h_2$. Note that set of these components are acted on freely and transitively by the deck group $H$.

\begin{lemma}
The locus $\Thetatilde^{red}$ of reducible fibers in $\Thetatilde$ is a divisor with simple normal crossings. Its irreducible components are isomorphic to $\h_1\times \h_1\times \CC$. The singular locus $\Thetatilde^{red,sing}$ is smooth, and its irreducible components are isomorphic to $\h_1\times \h_1$.
\end{lemma}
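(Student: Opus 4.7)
The proof is essentially a consolidation of the explicit computation already carried out in the paragraphs preceding the statement, together with a routine transversality verification. The plan is as follows.

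First, I would reduce to a single component of $\h_2^{red}$ using the $\Sp_2(\ZZ)$-equivariance argument employed earlier for $\Theta^{red}$. Choosing $M\in \Sp_2(\ZZ)$ that carries $\h_1\times\h_1$ onto $\h_{2,\beta}^{red}$, the associated biholomorphism of $\mathfrak{X}_2$ lifts to a biholomorphism of $\h_2\times\CC^2$ and therefore maps $\Thetatilde^{red}_{\h_1\times\h_1}$ onto $\Thetatilde^{red}_\beta$ (on the nose, since both are preimages of $\Sp_2(\ZZ)$-related strata under the universal cover). Pulling back $\vartheta$ along this automorphism multiplies it by a non-vanishing holomorphic factor and changes the characteristic, which does not affect any of the geometric properties under consideration. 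Consequently, it suffices to establish the three claims for $\Thetatilde^{red}_{\h_1\times\h_1}$ and for its singular locus, and also to observe that the various $\Thetatilde^{red}_\beta$ are pairwise disjoint in $\Thetatilde$ (they project to disjoint components of $\h_2^{red}$).

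Next, on the component $\Thetatilde^{red}_{\h_1\times\h_1}$, I would invoke the product decomposition
\begin{equation*}
\vartheta(\Omega_{11}\oplus\Omega_{22},z) = \vartheta_{(1/2,1/2)}(\Omega_{11},z_1)\,\vartheta_{(1/2,1/2)}(\Omega_{22},z_2),
\end{equation*}
so that the preimage in $\h_1\times\h_1\times\CC^2$ is the union of the loci where either Jacobi factor vanishes. Using the well-known fact that the odd Jacobi theta function $\vartheta_{(1/2,1/2)}(\tau,x)$ vanishes simply precisely on the lattice $\ZZ + \ZZ\tau$, the zero set of the $j$-th factor decomposes as the disjoint union, over $(m,n)\in\ZZ^2$, of the smooth hypersurfaces $D_j(m,n) = \{z_j = m+n\Omega_{jj}\}$. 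The map $(\Omega_{11},\Omega_{22},z_{3-j})\mapsto (\Omega_{11},\Omega_{22},m+n\Omega_{jj},z_{3-j})$ (with the indices interpreted appropriately) gives an explicit biholomorphism $\h_1\times\h_1\times\CC \xrightarrow{\cong} D_j(m,n)$, proving the second claim.

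Finally, I would verify the simple-normal-crossings property by checking transversality of the intersections. The only non-empty pairwise intersections are of the form $D_1(m,n)\cap D_2(m',n')$, cut out by the two equations $z_1 = m+n\Omega_{11}$ and $z_2=m'+n'\Omega_{22}$ in $\h_1\times\h_1\times\CC^2$. These equations have manifestly linearly independent differentials everywhere, so the intersection is a transverse intersection of smooth hypersurfaces; moreover it is evidently biholomorphic to $\h_1\times\h_1$. Taking the disjoint union over all $(m,n,m',n')$ gives $\Thetatilde^{red,sing}_{\h_1\times\h_1}$ as a disjoint union of smooth copies of $\h_1\times\h_1$, and transferring via $\Sp_2(\ZZ)$ yields the conclusion for arbitrary $\beta$. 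The only step requiring any care is the verification that distinct $D_j(m,n)$ and $D_j(m',n')$ for the same $j$ do not intersect, which follows from the disjointness of the lattice translates in the $z_j$ direction; all remaining claims are immediate from the explicit coordinate descriptions. I do not anticipate a genuine obstacle here, since every ingredient has been prepared in the preceding discussion.
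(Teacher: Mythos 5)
Your proposal is correct and follows essentially the same route as the paper: the paper also reduces to $\Thetatilde^{red}_{\h_1\times\h_1}$, uses the product decomposition of $\vartheta$, identifies the components $D_j(m,n) = \{z_j = m + n\Omega_{jj}\}$, and transfers to other $\beta$ by symmetry. The paper states the lemma as a summary of the preceding discussion rather than giving a formal proof, so your write-up simply makes explicit the transversality and disjointness checks (including the observation that $D_j(m,n) \cap D_j(m',n') = \emptyset$ for $(m,n)\neq(m',n')$ because $\Omega_{jj}$ has positive imaginary part) that the paper leaves implicit.
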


Let $S_2^{ab}$ denote the universal abelian cover of $S_2$. As a corollary of the fact that $\Theta-\Theta^{red}\cong \C'$ is a fiber bundle over $\h_2-\h_2^{red}$ with fiber $S_2$, we have that $\Thetatilde-\Thetatilde^{red}$ is a fiber bundle over $\h_2-\h_2^{red}$ with fiber $S_2^{ab}$. If $\chat^{red}$ denotes the preimage of $\C^{red}$ in $\chat$, then the same statement applies to $\chat-\chat^{red}$ as well.

The preceding discussion is summarized in the following proposition.
\begin{proposition}
The filtration $\Thetatilde\supset \Thetatilde^{red}\supset \Thetatilde^{red,sing}$ induces a Whitney stratification of $\Thetatilde$. With respect to this stratification and the Whitney stratification of $\h_2$ induced by $\h_2\supset \h_2^{red}$, the projection $\Thetatilde\rightarrow \h_2$ is a stratified submersion.
\end{proposition}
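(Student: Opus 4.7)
The plan is to reduce directly to the preceding proposition for $\Theta\rightarrow \h_2$, using the fact that the universal covering map $p:\Thetatilde\rightarrow \Theta$ is a local biholomorphism. Since Whitney regularity and the submersion condition are both local properties, and are invariant under local biholomorphism, essentially everything transfers.

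First I would verify that the three subsets $\Thetatilde\supset \Thetatilde^{red}\supset \Thetatilde^{red,sing}$ coincide with the preimages under $p$ of $\Theta\supset \Theta^{red}\supset \Theta^{red,sing}$. The first two equalities are by definition. For the singular loci, the preceding lemma establishes that $\Thetatilde^{red,sing}$ is smooth with irreducible components isomorphic to $\h_1\times \h_1$, and the explicit local equations for $\Thetatilde^{red}_{\h_1\times \h_1}$ (as a union of smooth components $D_j(m,n)$) show that it is precisely the set of points at which two smooth components of $\Thetatilde^{red}$ meet. Because $p$ is a local biholomorphism, this locus must agree with $p^{-1}(\Theta^{red,sing})$; by the $\Sp_2(\ZZ)$-symmetry argument used earlier for $\Theta^{red}$, the same identification holds over every component $\h_{2,\beta}^{red}$.

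Next I would check the Whitney conditions. Conditions (a) and (b) are local on pairs of adjacent strata, so it suffices to check them in arbitrarily small neighborhoods. On any such neighborhood $p$ restricts to a biholomorphism onto its image, which carries pairs of strata of $\Thetatilde$ to the corresponding pairs of strata of $\Theta$. Since biholomorphisms preserve tangent planes and secant directions, Whitney regularity of the stratification on $\Theta$, which is part of the content of the preceding proposition, pulls back to Whitney regularity of the stratification on $\Thetatilde$.

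For the stratified submersion property I would factor $\Thetatilde\rightarrow \h_2$ as $\Thetatilde\xrightarrow{p}\Theta\rightarrow \h_2$. By the previous proposition, $\Theta\rightarrow \h_2$ sends each stratum to a stratum and restricts to a submersion on each; composing with $p$ (which is a local biholomorphism matching strata with strata) preserves both properties, so $\Thetatilde\rightarrow \h_2$ is a stratified submersion. The only step that is not essentially formal is the local identification of $\Thetatilde^{red,sing}$ with $p^{-1}(\Theta^{red,sing})$ over the non-diagonal components of $\h_2^{red}$, but I expect this to follow routinely from the $\Sp_2(\ZZ)$ transport mechanism already employed for $\Theta^{red}$.
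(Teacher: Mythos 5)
Your argument is correct, but it takes a different (and cleaner) path than the paper. The paper does not give a dedicated proof of this proposition; rather, it states that ``the preceding discussion is summarized in the following proposition,'' and that discussion re-establishes the local structure of $\Thetatilde^{red}$ from scratch: the explicit components $D_j(m,n)$, their intersections, and the submersion properties of the projection restricted to each stratum, all derived directly from the formula for $\vartheta_\alpha$ and the transport by $\Sp_2(\ZZ)$. In effect the paper repeats, upstairs, the same type of explicit verification carried out for $\Theta$. You instead transfer the earlier proposition for $\Theta$ wholesale through the covering map $p:\Thetatilde\rightarrow\Theta$, using that $p$ is a local biholomorphism, that $\Thetatilde^{red}=p^{-1}(\Theta^{red})$ by definition, and that Whitney conditions (a), (b), smoothness of strata, and the stratified submersion condition are all local and invariant under local biholomorphism. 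This is more conceptual and involves essentially no new computation.

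One small remark: your final caveat --- that the identification $\Thetatilde^{red,sing}=p^{-1}(\Theta^{red,sing})$ over non-diagonal components might require re-running the $\Sp_2(\ZZ)$ transport --- is unnecessary. The singular locus of a simple normal crossings divisor is a purely local invariant, so for \emph{any} local biholomorphism $p$ and any divisor $D$, one has $\mathrm{sing}(p^{-1}D)=p^{-1}(\mathrm{sing}\,D)$; no case analysis over components of $\h_2^{red}$ is needed. With that observation your proof is entirely formal given the proposition for $\Theta$, and it is a valid alternative to the paper's direct verification.
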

\section{Computing $H_k(\widetilde{\C})$ for $k \geq 4$}
In this section, we will show that the integral homology groups $H_k(\chat)$ vanish when $k\geq 4$. This will be done by considering the PL Gysin sequence derived in Hain's paper \cite{hain2006finiteness}. In our case, this is an exact sequence which relates the homology groups of $\chat$ and $\chat-\chat^{red}$ to the compactly supported cohomology of $\chat^{red}$. 
Also in \cite{hain2006finiteness}, a spectral sequence was derived which will allow us to compute $H_c^k(\chat^{red})$, given certain combinatorial input that describes the way in which the components of $\chat^{red}$ intersect.
\\\\\indent
We begin with the following statement.
\begin{lemma}\label{lemma26}
The total space of the fiber bundle 
$$S_2^{ab}\rightarrow \left(\chat-\chat^{red}\right)\rightarrow \cT_2$$ is a $K(G,1)$ space. The homology groups
$H_k(\chat-\chat^{red})$ vanish for $k\geq 3$.
\end{lemma}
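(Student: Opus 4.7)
The plan is to exploit the fact that both the base $\cT_2$ and the fiber $S_2^{ab}$ of the displayed fibration are aspherical and have cohomological dimension at most one; this will give both asphericity of the total space and concentration of its homology in degrees $\leq 2$.

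First I would verify asphericity of the base and fiber. By Mess's theorem, $T_2$ is a free group, and $\cT_2$ is a $K(T_2,1)$ since $T_2$ acts freely and properly discontinuously on the contractible Teichm\"{u}ller space $\mathcal{X}_2$. The fiber $S_2^{ab}$ is a non-compact (infinite-genus) Riemann surface, and any non-compact surface deformation retracts onto a 1-dimensional CW complex, so $S_2^{ab}$ is a $K(\pi',1)$ where $\pi' = [\pi,\pi]$. Applying the long exact sequence of homotopy groups of the fibration $S_2^{ab} \to (\chat - \chat^{red}) \to \cT_2$, one obtains $\pi_n(\chat - \chat^{red}) = 0$ for all $n \geq 2$, so $\chat - \chat^{red}$ is a $K(G,1)$ with $G := \pi_1(\chat - \chat^{red})$.

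For the homological vanishing I would invoke the Serre spectral sequence with local coefficients,
$$E^2_{p,q} = H_p\bigl(\cT_2;\, H_q(S_2^{ab})\bigr) \Longrightarrow H_{p+q}(\chat - \chat^{red}).$$
Since $T_2$ is free, $\cT_2 \simeq K(T_2,1)$ has cohomological dimension $1$, so $H_p(\cT_2; M) = 0$ for every local coefficient system $M$ whenever $p \geq 2$. Likewise, since $S_2^{ab}$ is homotopy equivalent to a graph, $H_q(S_2^{ab}) = 0$ for $q \geq 2$. Therefore $E^2_{p,q}$ is supported in the square $\{0 \leq p,q \leq 1\}$, and all abutment terms of total degree $\geq 3$ vanish, yielding $H_k(\chat - \chat^{red}) = 0$ for every $k \geq 3$.

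I do not anticipate a serious obstacle: the lemma reduces to two standard asphericity facts together with a routine spectral-sequence collapse. The only mild care required is the use of local coefficients for the Serre spectral sequence (since $\cT_2$ is far from simply connected and the monodromy of $T_2$ on $H_*(S_2^{ab})$ is nontrivial), but this is handled in the standard way.
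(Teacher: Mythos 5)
Your proposal is correct and takes essentially the same route as the paper. The paper establishes the group extension $1 \to \pi' \to \Gamma \to T_2 \to 1$ and cites a result of Brown (which is proved via the Lyndon--Hochschild--Serre spectral sequence) to conclude that $\Gamma$ has cohomological dimension at most $2$, hence $H_k(\Gamma;\ZZ) = 0$ for $k \geq 3$; your proposal simply unpacks that same spectral sequence argument explicitly at the level of spaces, which is the same reasoning expressed more concretely.
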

\begin{proof}
Since the fiber and base of this fiber bundle are $K(G,1)$ spaces, the total space is as well. The group $\Gamma:=\pi_1(\chat-\chat^{red},*)$ is an extension
\begin{equation}
1\rightarrow \pi'\rightarrow \Gamma\rightarrow T_2\rightarrow 1.
\end{equation}
where $\pi'$ denotes the commutator subgroup of $\pi = \pi_1(S_2,*)$. Recall that $\pi'$ is a free group. Since $\pi'$ and $T_2$ are free, and therefore have cohomological dimension 1, the group $\Gamma$ has cohomological dimension at most 2 (see \cite{brown1982cohomology}, p. 188). By the universal coefficients theorem, this implies that $H_k(\Gamma,\mathbb{Z}) = 0$ for $k\geq 3$. Finally, we obtain isomorphisms 
$H_k(\chat-\chat^{red})\cong H_k(\Gamma,\mathbb{Z}) = 0$ for $k\geq 3$.
\end{proof}
\subsection{The PL Gysin Sequence}
We now describe the long exact sequence constructed in \cite{hain2006finiteness}. Let $X$ be an oriented PL manifold of dimension $m$, and let $Y\subset X$ be a closed PL subset (see \cite{borel2008intersection}, p.2). Then there is a long exact sequence
\begin{equation}
\cdots\rightarrow H_c^{m-k-1}(Y)\rightarrow H_k(X-Y)\rightarrow H_k(X)\rightarrow H_c^{m-k}(Y)\rightarrow \cdots
\end{equation}
The groups $ H_c^{\bullet}(Y)$ can be computed, in principle, using the spectral sequence that was constructed in the same paper. We describe it now.

Suppose that $Y$ is a locally finite union 
\begin{equation}
Y = \bigcup_{i\in I}Y_i
\end{equation}
of closed PL subspaces of the manifold $X$, where $I$ is a partially ordered set. Define
\begin{equation}
Y_{(i_0,\ldots, i_k)} = Y_{i_0}\cap\cdots \cap Y_{i_k}.
\end{equation}
and set 
\begin{equation}
\cY_k = \coprod_{i_0<\cdots <i_k}Y_{(i_0,\ldots, i_k)}.
\end{equation}

Then there is a spectral sequence 
\begin{equation}
E_1^{s,t} = H_c^t(\cY_s)\implies H_c^{s+t}(Y)
\end{equation}
where $\cY_s = \displaystyle \coprod_{\alpha_0<\cdots <\alpha_s} Y_{\alpha_0}\cap\cdots \cap Y_{\alpha_s}$.

We will apply these tools in the case where $X = \chat$ and $Y = \chat^{red}$.
At this point, we need to describe the combinatorics of the ways in which the components of $\Thetatilde^{red}$ intersect. Note that the components of the form $D_1(m,n)$ do not intersect each other; they only intersect components of the form $D_2(m,n)$. Similarly, the components of the form $D_2(m,n)$ only intersect the components of the form $D_1(m,n)$. 
Just as the components of $\Thetatilde^{red}_{\h_1\times \h_1}$ fall into two types, so do the components of $\Thetatilde^{red}_{\beta}$ for an arbitrary component $\h_{2,\beta}^{red}$ of $\h_2^{red}$. Two components of $\Thetatilde^{red}_{\beta}$ are of the same type if and only if the deck group $H$ carries one to the other. Arbitrarily label the components of $\Thetatilde^{red}_{\beta}$ of one type with a ``1''; label the components of $\Thetatilde^{red}_{\beta}$ of the other type with a ``2". Now $\Thetatilde^{red}_{\beta}$ is the union of the components $\widetilde D_{\beta,1}(m,n)$ and $\widetilde D_{\beta,2}(m,n)$.

\begin{lemma}\label{lemma27}
The groups $H_c^k(\chat^{red})$ vanish unless $k = 5$ or $k=6$. 
\end{lemma}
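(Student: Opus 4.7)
The plan is to apply the spectral sequence $E_1^{s,t}=H_c^t(\cY_s)\Rightarrow H_c^{s+t}(\chat^{red})$ from \cite{hain2006finiteness} directly to the decomposition of $\chat^{red}\cong \Thetatilde^{red}$ into its irreducible components described in the previous section. Concretely, over each component $\h_{2,\beta}^{red}$ the locus $\Thetatilde^{red}_{\beta}$ is the union of the components $\widetilde D_{\beta,1}(m,n)$ and $\widetilde D_{\beta,2}(m,n)$ indexed by $(m,n)\in \ZZ^2$, each of which is diffeomorphic to $\h_1\times \h_1\times \CC\cong \RR^6$. Since different $\Thetatilde^{red}_{\beta}$'s are disjoint, these pieces together form the set $I$ of irreducible components.

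The first step is to compute $E_1^{0,*}$. Each component is a contractible oriented open $6$-manifold, so by Poincar\'e duality $H_c^t(\widetilde D_{\beta,j}(m,n))$ is $\ZZ$ in degree $6$ and zero otherwise; therefore $E_1^{0,t}$ is concentrated in the single entry $t=6$. The second step is to analyze $\cY_1$. Within a fixed $\Thetatilde^{red}_\beta$, the explicit equations $z_j=m+n\Omega_{jj}$ show that two components of the same type are disjoint (the difference equation $(m-m')=(n'-n)\Omega_{jj}$ has no solution in $\h_1$ unless $(m,n)=(m',n')$), and two components from different $\beta$'s are disjoint by hypothesis. The only nonempty pairwise intersections are therefore $\widetilde D_{\beta,1}(m,n)\cap \widetilde D_{\beta,2}(m',n')\cong \h_1\times \h_1\cong \RR^4$. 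Hence $E_1^{1,t}$ is concentrated in $t=4$. The third step is to rule out higher intersections: among any three distinct components, two must share a type (since there are only two types per $\beta$, and distinct $\beta$'s are already disjoint), and two components of the same type are disjoint. So $\cY_s=\emptyset$ for $s\geq 2$ and $E_1^{s,t}=0$ there.

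The final step is the convergence argument. The only possibly nonzero entries on the $E_1$ page are $E_1^{0,6}$ and $E_1^{1,4}$, lying on the diagonals $s+t=6$ and $s+t=5$ respectively. Every differential $d_r$ emanating from or arriving at these positions lands in the zero region $\{s\geq 2\}\cup\{s<0\}$, so the spectral sequence degenerates at $E_2=E_1$. Therefore $H_c^k(\chat^{red})$ can be nonzero only for $k=5$ and $k=6$, which is the desired conclusion. The main obstacle is organizational rather than technical: one must carefully justify that the local model $D_j(m,n)$ and its $\Sp_2(\ZZ)$-translates genuinely account for all intersection patterns across the various $\beta$, so that the pigeonhole argument on types applies uniformly and forces all triple intersections to be empty.
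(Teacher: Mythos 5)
Your argument is essentially the same as the paper's: you invoke the same spectral sequence $E_1^{s,t}=H_c^t(\cY_s)\Rightarrow H_c^{s+t}(\chat^{red})$, identify that $\cY_0$ has components $\cong\RR^6$ and $\cY_1$ has components $\cong\RR^4$, show $\cY_s=\emptyset$ for $s\geq 2$ via the same pigeonhole on component types, and conclude degeneration at $E_1$ with support only in total degrees $5$ and $6$. The only differences are cosmetic — you spell out the disjointness of same-type components via the equation $(m-m')=(n'-n)\Omega_{jj}$ and name Poincar\'e duality when computing $H_c^t(\RR^n)$, steps the paper leaves implicit.
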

\begin{proof}
Place any partial order on the set of irreducible components of $\chat^{red}$. Observe that two components $\widetilde D_{\beta_1,j}(m_1,n_1)$ and $\widetilde D_{\beta_2,k}(m_2,n_2)$ intersect if and only if $\beta_1 = \beta_2$ and $j\neq k$. This implies that no more than two components of $\chat^{red}$ can intersect non-trivially. Otherwise two components of $\chat^{red}_{\beta}$ of the same type would intersect; but this is impossible. Therefore the subsets $\cY_s$ are all empty for $s\geq 2$. 

As has already been established, the components of $\cY_0$ are analytically isomorphic to $\h_1\times \h_1\times \CC$, hence diffeomorphic to $\RR^6$. The components of $\cY_1$ are analytically isomorphic to $\h_1\times\h_1$, hence diffeomorphic to $\RR^4$. Thus the $E_1$ page of the spectral sequence for $H_c^{\bullet}(\chat^{red})$ is of the following form:
\begin{equation*}
\begin{tabular}{l|r}
 & \xymatrix{
  H^6_c(\cY_0)&0  \\
 0 &0 \\
  0&H_c^4(\cY_1) \\
  \vdots& \vdots    \\
  0 & 0
  }\\
\hline\\
\end{tabular}
\end{equation*}
Because of the  shape of this spectral sequence, all differentials vanish and this spectral sequence degenerates at the $E_1$ page. It follows that $H_c^k(\chat^{red}) = 0$ unless $k = 5$ or $k = 6$.
\end{proof}
We now combine this result with the vanishing of the homology of $\chat-\chat^{red}$ in degrees at least 3 to obtain, by use of the PL Gysin sequence, the following result.
\begin{proposition}\label{prop22}
The group $H_k(\chat)$ vanishes for $k\geq 4$.
\end{proposition}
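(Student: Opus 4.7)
The plan is to feed Lemmas \ref{lemma26} and \ref{lemma27} into the PL Gysin sequence. Since $\Theta$ is a complex manifold of complex dimension $4$ (the base $\h_2$ has complex dimension $3$ and the fibers are curves), the universal cover $\chat \cong \Thetatilde$ is a real PL manifold of dimension $m = 8$. Taking $X = \chat$ and $Y = \chat^{red}$ in the sequence recalled above yields
\begin{equation*}
\cdots \to H_c^{7-k}(\chat^{red}) \to H_k(\chat - \chat^{red}) \to H_k(\chat) \to H_c^{8-k}(\chat^{red}) \to \cdots
\end{equation*}

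The argument is then an immediate exact-sequence chase. For any fixed $k \geq 4$, I would verify that both groups flanking $H_k(\chat)$ vanish. The left flanking term $H_k(\chat - \chat^{red})$ is zero by Lemma \ref{lemma26}, which kills homology in all degrees $\geq 3$. For the right flanking term $H_c^{8-k}(\chat^{red})$, Lemma \ref{lemma27} restricts non-vanishing to compactly supported degrees $5$ and $6$; since $k \geq 4$ forces $8 - k \leq 4$, this group is zero as well. Exactness then forces $H_k(\chat) = 0$.

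There is essentially no obstacle once the preceding lemmas are in hand — the only thing to be careful about is the dimension bookkeeping, namely confirming that the real PL dimension is $8$ so that the two vanishing ranges (Lemma \ref{lemma26} on the left at $k \geq 3$, Lemma \ref{lemma27} on the right at $8-k \notin \{5,6\}$) align to sandwich $H_k(\chat)$ for every $k \geq 4$. All of the substantive work has already been carried out in establishing the two input lemmas.
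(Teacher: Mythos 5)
Your proposal is correct and follows the same route as the paper: plug Lemmas \ref{lemma26} and \ref{lemma27} into the PL Gysin sequence for the pair $(\chat, \chat^{red})$ with $m = 8$, and conclude by exactness. The paper's version additionally observes that $H_c^{7-k}(\chat^{red})$ also vanishes, but as you note this is not needed once $H_k(\chat - \chat^{red}) = 0$ is supplied by Lemma \ref{lemma26}.
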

\begin{proof}
For each $k$, we have exact sequences
\begin{equation}
H_c^{7-k}(\chat^{red})\rightarrow H_k(\chat-\chat^{red})\rightarrow H_k(\chat)\rightarrow H_c^{8-k}(\chat^{red})
\end{equation}
coming from the PL Gysin sequence. By Lemma \ref{lemma27}, when $k\geq 4$, the groups $H_c^{7-k}(\chat^{red})$ and $H_c^{8-k}(\chat^{red})$ are both zero, since then $7-k$ and $8-k$ are both $\leq 4$. Furthermore, $H_k(\chat-\chat^{red}) = 0$ for $k\geq 4$ by Lemma \ref{lemma26}. This implies that $H_k(\chat) = 0$ for $k \geq 4$ by exactness.
\end{proof}
\section{\texorpdfstring{A Direct Sum Decomposition of $H_2(\chat-\chat^{red})$}{A Direct Sum Decomposition of H2(chat-Dhat)}}

It will be convenient for our purposes to replace $\chat-\chat^{red}$ with a homotopy equivalent space. Specifically, we would like to view it, up to homotopy equivalence, as a $S_2^{ab}$ bundle over a 1-complex. We construct the 1-complex that will serve as the base as follows.

View the real line as a CW complex having 0-cells at the even integers, and 1-cells joining the integer $2n-2$ to $2n$ for each $n\in \ZZ$. Now attach a circle $S^1_{2n}$ at each even integer $2n$. The resulting space is homotopy equivalent to an infinite bouquet of circles. Choose some indexing $\beta(n)$ of the components $\h_{2,\beta}^{red}$ by $\ZZ$. 

Now recall the description of Mess's results given in Section 1.1.2 above. Define a continuous map $f$ from $X$ to $\h_2-\h_2^{red}$ in the following way. On the interval $[2n-1,2n]$ define $f$ by $f(t) = \gamma_{\beta(n)}(t)$; on the circle $S^1_{2n}$, define $f$ to be an orientation-preserving homeomorphism $S^1_{2n}\rightarrow \partial \Delta_{\beta}$ which sends the point $2n\in S^1_{2n}$ to the point where $\gamma_{\beta}$ meets $\partial \Delta_{\beta}$; on the interval $[2n,2n+1]$, define $f$ by $f(t) = \gamma_{\beta(n)}(1-t)$. The map $f:X\rightarrow \h_2-\h_2^{red}$ is a homotopy equivalence.

The bundle $\chat-\chat^{red}\rightarrow \cT_2$ can now be pulled back to $X$. Call the pullback bundle $\chatx$. The total space of this bundle is homotopy equivalent to $\chat-\chat^{red}$. Let $E = \displaystyle \coprod_{n\in \ZZ} S^1_{2n}\subset X$. Denote the restriction of $\chatx$ to $E$ by $\chate$. Away from the point $2n\in S^1_{2n}$, choose a fiber $F_n$ of $\chatx|_{S^1_{2n}}$ and define $F = \displaystyle \coprod_{n\in \ZZ} F_n\subset \chatx$.

\begin{proposition}
There is a direct sum decomposition
\begin{equation}
H_2(\chat-\chat^{red}) = \cF\oplus \bigoplus_{\beta}H_2(\chat_{\partial \Delta_{\beta}})
\end{equation}
where $\cF$ is a free abelian group.
\end{proposition}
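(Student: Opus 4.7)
The plan is to apply a Mayer--Vietoris argument to a two-piece open cover of $\chatx$, exploiting the fact that, up to homotopy, $\chatx$ is an $S_2^{ab}$-bundle over the 1-complex $X$ whose base decomposes as a contractible spine $\RR$ with circles $S^1_{2n}$ attached at isolated points. Concretely, write $X = A \cup B$, where $A$ is a small open thickening of $E = \coprod_n S^1_{2n}$ deformation retracting to $E$, and $B$ is an open neighborhood of the spine deformation retracting to $\RR$; then $A \cap B$ deformation retracts onto the discrete set $\{2n : n \in \ZZ\}$. Pulling back under the bundle map $p \colon \chatx \to X$, set $U = p^{-1}(A)$ and $V = p^{-1}(B)$. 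Since the spine is contractible, the bundle trivialises over $B$, so $V \simeq S_2^{ab}$; meanwhile $U \simeq \chate$ and $U \cap V \simeq F = \coprod_n F_n$, where each $F_n$ is a copy of $S_2^{ab}$.

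I would next identify the terms in the Mayer--Vietoris sequence that are relevant in degree~2. Because $S_2^{ab}$ is a connected non-compact surface without boundary, it is homotopy equivalent to a 1-dimensional CW complex; in particular $H_2(S_2^{ab}) = 0$ and $H_1(S_2^{ab})$ is a free abelian group. Consequently $H_2(V) = 0$ and $H_2(U\cap V) = 0$, while $H_1(U \cap V) = \bigoplus_n H_1(S_2^{ab})$ is free abelian. Since $\chate$ is a disjoint union whose pieces $\chatx|_{S^1_{2n}}$ are identified via $f$ with $\chat_{\partial\Delta_{\beta(n)}}$, we also obtain $H_2(U) = H_2(\chate) = \bigoplus_\beta H_2(\chat_{\partial\Delta_\beta})$.

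Plugging these into Mayer--Vietoris yields
\begin{equation*}
0 \longrightarrow \bigoplus_\beta H_2(\chat_{\partial\Delta_\beta}) \longrightarrow H_2(\chatx) \longrightarrow H_1(F) \longrightarrow H_1(U) \oplus H_1(V).
\end{equation*}
Setting $\cF$ to be the kernel of the last map, one extracts a short exact sequence
\begin{equation*}
0 \longrightarrow \bigoplus_\beta H_2(\chat_{\partial\Delta_\beta}) \longrightarrow H_2(\chatx) \longrightarrow \cF \longrightarrow 0.
\end{equation*}
Since $\cF$ is a subgroup of the free abelian group $H_1(F)$, it is itself free abelian, so the sequence splits and one obtains the claimed direct sum decomposition after invoking the homotopy equivalence $\chatx \simeq \chat - \chat^{red}$. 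The main delicate point, and the step I expect to require the most care to justify cleanly, is the claim that $V$ really is homotopy equivalent to a single fiber (so that the spine contributes nothing to $H_{\ge 1}$ of the total space) together with the identification of $\chate$ as the genuine homotopy type of $U$; once these deformation retractions are verified, freeness of $\cF$ and the splitting are automatic.
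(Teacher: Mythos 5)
Your Mayer--Vietoris argument is correct and arrives at the same decomposition, but it takes a mildly different route than the paper. The paper works with the long exact sequence of the pair $(\chatx,\chate)$ and uses excision to pass to $(\chatx-F,\chate-F)$, then shows $H_3(\chatx,\chate)=0$ and $H_2(\chatx,\chate)$ is free abelian by observing that $\chatx-F$ and $\chate-F$ are bundles over contractible bases with fiber $S_2^{ab}$, hence homotopy equivalent to $1$-complexes; the splitting then comes from the exact sequence $0\to H_2(\chate)\to H_2(\chatx)\to H_2(\chatx,\chate)$. You instead run Mayer--Vietoris for an open cover $(U,V)$ with $U\simeq\chate$, $V\simeq S_2^{ab}$, $U\cap V\simeq F$, and extract $\cF$ as the image of $H_2(\chatx)\to H_1(U\cap V)$ inside the free abelian group $H_1(F)$. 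The two arguments rest on identical geometric inputs (namely that an $S_2^{ab}$-bundle over a contractible base is trivial and has the homotopy type of a $1$-complex), and since $\chate - F$ also deformation retracts to a disjoint union of copies of $S_2^{ab}$, the free abelian group you land in is the same one the paper uses; the difference is essentially one of packaging. If anything, the Mayer--Vietoris version is slightly more transparent about where the free abelian complement comes from. The one place to tighten is your final worry about the retractions: since the spine $\RR$ and each component of $E$ minus a point are contractible, local triviality of the bundle immediately gives $V\simeq S_2^{ab}$ and $U\cap V\simeq F$, so there is nothing delicate left to check there.
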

\begin{proof}

The inclusion $\chate\hookrightarrow \chat-\chat^{red}$ factors through the inclusion $\displaystyle \coprod_{\beta} \chat_{\partial \Delta_{\beta}}\subset \chat-\chat^{red}$, and induces a homotopy equivalence $\chate\simeq \displaystyle\coprod_{\beta} \chat_{\partial \Delta_{\beta}}$.
By excision, there are natural isomorphisms $H_k(\chatx,\chate)\cong H_k(\chatx-F,\chate-F)$ for each $k$.

The long exact sequence of homology for the pair $(\chatx,\chate)$ has a segment
\begin{equation}
H_3(\chatx,\chate)\rightarrow  H_2(\chate)\rightarrow H_2(\chatx)\rightarrow H_2(\chatx,\chate)
\end{equation}
which we study in several steps. 
\begin{enumerate}
\item[\underline{Step 1}]: First we show that $H_3(\chatx,\chate) = 0$. We will study the exact sequence of the pair $(\chatx-F,\chate-F)$. Since $\chatx-F$ is a fiber bundle over a contractible CW-complex, it is a trivial bundle. It therefore has the homotopy type of a single fiber, which we denote by $F_0$. Since $F_0\cong S_2^{ab}$ is homotopy equivalent to a 1-complex, it follows that $\chatx-F$ is homotopy equivalent to a 1-complex. Similarly, each connected component of $\chate-F$ is a $S_2^{ab}$-bundle over a contractible CW-complex. It follows that $\chate-F$ is homotopy equivalent to a 1-complex.

\item[\underline{Step 2}]:
The groups $H_3(\chatx-F)$ and $H_2(\chate-F)$ vanish by Step 1. It follows that the group $H_3(\chatx-F,\chate-F)$ also vanishes. On the other hand, the group $H_2(\chatx-F)$ vanishes and the group $H_1(\chate-F)$ is free abelian. This implies that $H_2(\chatx-F,\chate-F)$ is a subgroup of  $H_1(\chate-F)$ and is therefore also free abelian. Finally, this implies that $H_2(\chatx,\chate)\cong H_2(\chatx-F,\chate-F)$ is free abelian. 

\item[\underline{Step 3}]:
The conclusions arrived at in Step 2 imply that the sequence
\begin{equation*}
0\rightarrow H_2(\chate)\rightarrow H_2(\chatx)\rightarrow H_2(\chatx,\chate)
\end{equation*}
is exact and that the image of the rightmost map is a free abelian group, which we denote by $\cF'$. This implies that there is a split exact sequence
\begin{equation*}
0\rightarrow H_2(\chate)\rightarrow H_2(\chatx)\rightarrow \cF'\rightarrow 0.
\end{equation*}

By functoriality of the long exact sequence of homology for a pair, there is a commutative diagram 
\begin{equation*}
\xymatrix{
0\ar[r]& H_2(\chate)\ar[r]\ar[d]^{\cong}& H_2(\chatx) \ar[d]^{\cong}\ar[r] &\cF'\ar[r] &0\\
0\ar[r]& H_2(\coprod_{\beta} \chat_{\partial \Delta_{\beta}})\ar[r] & H_2(\chat-\chat^{red})\ar[ru] & 
}
\end{equation*}
with exact rows. This implies that the corresponding sequence
\begin{equation*}
0\rightarrow H_2(\coprod_{\beta} \chat_{\partial \Delta_{\beta}})\rightarrow H_2(\chat-\chat^{red})\rightarrow \cF' \rightarrow 0 
\end{equation*}
is exact. The result follows.
\end{enumerate}
\end{proof}

\section{The Computation of \texorpdfstring{$H_3(\chat)$ and $H_2(\chat)$}{The computation of H3chat and H2chat}}
\ 
\indent Our tool of choice in this section is the long exact sequence of homology for the pair $(\chat,\chat-\chat^{red})$. Recall that we have already shown that $H_k(\chat) = 0$ when $k\geq 4$ and that $H_k(\chat-\chat^{red}) = 0$ when $k\geq 3$. We are thus left to consider the sequence
\begin{equation*}
0\rightarrow H_3(\chat)\rightarrow H_3(\chat,\chat-\chat^{red})\rightarrow H_2(\chat-\chat^{red})
\rightarrow H_2(\chat)\rightarrow H_2(\chat,\chat-\chat^{red})
\end{equation*}
We will show that the boundary map $\partial: H_3(\chat,\chat-\chat^{red})\rightarrow H_2(\chat-\chat^{red})$ factors in the following way:
\begin{equation}
\xymatrix{
H_3(\chat,\chat-\chat^{red})\ar[rr]^{\displaystyle\partial}\ar[rd]_{\cong}& &H_2(\chat-\chat^{red})\\
&\bigoplus_{\beta}H_2(\chat_{\beta})\ar@{^{(}->}[ru] &
}
\end{equation}

This will imply that $H_3(\chat)$ vanishes and, coupled with the fact that $H_2(\chat,\chat-\chat^{red})$ is a free abelian group (generated by small disks transverse to the irreducible components of $\chat^{red}$), that $H_2(\chat)$ is a free abelian group.
\\\\\indent 
In order to carry this out, we will first find a convenient direct sum decomposition for $H_3(\chat, \chat-\chat^{red})$ which is in some sense compatible with the boundary map. Choose a closed tubular neighborhood $\overline{N}$ of $\h_2^{red}$. This restricts to a tubular neighborhood $\overline{N}_{\beta}$ of each component $\h_{2,\beta}^{red}$ of $\h_2^{red}$. Let $N = \overline{N}-\partial{\overline{N}}$. By excision, there is a natural isomorphism
\begin{equation}
H_3(\chat, \chat-\chat^{red})\cong \bigoplus_{\beta}H_3(\chat_{N_{\beta}},\chat_{N_{\beta}} - \chat^{red}_{\beta}).
\end{equation}
To prove the desired result, we will need to gain a thorough understanding of the terms $H_3(\chat_{N_{\beta}},\chat_{N_{\beta}}-\chat^{red}_{\beta})$. 

Fix a component $\h_{2,\beta}^{red}$ of $\h_2^{red}$. We will study the topology of the pair of spaces $(\chat_{N_{\beta}},\chat_{N_{\beta}} - \chat^{red}_{\beta})$. Our goal is to show that there is a homeomorphism
\begin{equation}
(\chat_{N_{\beta}},\chat_{N_{\beta}} - \chat^{red}_{\beta})\cong \h_{2,\beta}^{red} \times (\chat_{N(p_{\beta})},\chat_{N^*(p_{\beta})}).
\end{equation}
Here $N(p_{\beta})$ is the fiber of $N_{\beta}$ over the point $p_{\beta}\in \h_{2,\beta}^{red}$, $\chat_{N(p_{\beta})}$ is its preimage in $\chat$ and $\chat_{N^*(p_{\beta})}$ is the preimage of $N^*(p_{\beta}):= N(p_{\beta})-p_{\beta}$ in $\chat$. We may assume that $N(p_{\beta}) = \Delta_{\beta}$.
\\\\\indent 
In order to prove this result, it appears easier to first work with the pair of spaces $(\Theta_{\overline{N}_{\beta}}, \Theta_{\overline{N}_{\beta}}-\Theta^{red}_{\overline{N}_{\beta}})$.
The immediate goal is to show that the sequence of projections $\Theta_{\overline{N}_{\beta}}\rightarrow \overline{N}_{\beta}\rightarrow \h_{2,\beta}^{red}$ fibers the pair $(\Theta_{\overline{N}_{\beta}},\Theta_{\overline{N}_{\beta}} - \Theta^{red}_{\beta})$ locally trivially. We will use Thom's First Isotopy Lemma (see \cite[p.41]{goresky1988stratified}) to accomplish this. For the reader's convenience, we reproduce it here.
\begin{theorem}[Thom's First Isotopy Lemma]
Let $f: Z\rightarrow \RR^n$ be a proper stratified submersion. Then there is a stratum-preserving homeomorphism
\begin{equation}
h: Z\rightarrow \RR^n\times \left(f^{-1}(0)\cap Z\right)
\end{equation}
which is smooth on each stratum and commutes with the projection to $\RR^n$. In particular the fibers of $f|_Z$ are homeomorphic by a stratum-preserving homeomorphism.
\end{theorem}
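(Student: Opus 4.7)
The plan is to prove Thom's First Isotopy Lemma via the classical Mather--Thom method of controlled vector fields. First I would reduce to the case $n=1$: once we have a stratum-preserving trivialization along a single coordinate direction, iterating gives the full product structure, since at each step the fiber over a coordinate hyperplane is again a proper stratified submersion to $\RR^{n-1}$. So the real content is producing, for the coordinate field $\partial/\partial x_1$ on $\RR^n$, a stratum-preserving, fiberwise homeomorphism of $Z$ realizing its flow.

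The second step is to equip $Z$ with a system of control data in the sense of Mather. For each stratum $X$ of $Z$ one chooses an open tubular neighborhood $T_X \subset Z$, a continuous retraction $\pi_X \colon T_X \to X$ smooth on each stratum meeting $T_X$, and a tubular distance function $\rho_X \colon T_X \to [0,\infty)$ vanishing precisely on $X$. These are required to satisfy the compatibility identities $\pi_Y \circ \pi_X = \pi_Y$ and $\rho_Y \circ \pi_X = \rho_Y$ whenever $Y$ is a stratum in the frontier of $X$ and both sides are defined. The existence of such control data on any Whitney stratified space is standard.

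The third and main step is to build a \emph{controlled} vector field $v$ on $Z$ that lifts $\partial/\partial x_1$. On each stratum $X$, since $f|_X$ is a smooth submersion, one can locally lift $\partial/\partial x_1$ to a smooth tangent vector field on $X$; the control conditions require that near a lower stratum $Y < X$ the vector field on $X$ be $\pi_Y$-related to the one on $Y$ and annihilate $\rho_Y$. These conditions are arranged by induction on stratum depth using a stratumwise partition of unity, and this is where Whitney condition (b) is indispensable: it ensures that the tangent planes to $X$ approach those of $Y$ in a controlled way, so that the lifts glue to a continuous vector field whose integral curves are continuous across stratum boundaries.

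The final step is to integrate $v$. Integral curves exist for all time because $f$ is proper (so trajectories cannot escape the preimage of a compact interval) and because the condition $v \cdot \rho_Y = 0$ near each lower stratum $Y$ keeps the distance $\rho_Y$ constant along flow lines, so that a trajectory in $X$ cannot fall into $Y$ in finite time. The resulting flow $\phi_t$ is a stratum-preserving homeomorphism that commutes with $f$ and is smooth on each stratum; combining the flows in each coordinate direction produces the desired homeomorphism $h \colon Z \to \RR^n \times (f^{-1}(0) \cap Z)$. The main obstacle throughout is the construction of the controlled vector field together with the verification that its flow is continuous and not merely stratumwise smooth; everything hinges on the Whitney regularity baked into the definition of a stratified submersion.
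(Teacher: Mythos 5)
The paper does not actually prove Thom's First Isotopy Lemma: it states it verbatim as a cited result, attributing it to Goresky--MacPherson \cite[p.41]{goresky1988stratified}, precisely because the proof is a substantial piece of stratification theory that would be out of scope. So there is no ``paper's own proof'' to compare against.

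That said, your outline is a faithful summary of the standard Mather--Thom argument that appears in the reference the paper cites. The four steps you identify --- reduction to a single coordinate direction, choice of control data $(T_X, \pi_X, \rho_X)$ with the commutation relations $\pi_Y \circ \pi_X = \pi_Y$ and $\rho_Y \circ \pi_X = \rho_Y$, construction of a controlled lift of the coordinate vector field by induction on stratum depth using Whitney (b), and integration of the resulting vector field using properness for completeness --- are exactly the skeleton of the proof in Mather's notes and in Goresky--MacPherson. You have also correctly isolated the two hardest technical points: (i) Whitney condition (b) is what guarantees that the stratumwise lifts glue to a field whose flow is continuous, not merely continuous on each stratum; and (ii) $v \cdot \rho_Y = 0$ together with properness of $f$ is what keeps trajectories from running off a stratum or escaping to infinity in finite time. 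The one caveat is that this is an outline rather than a complete proof: the existence of compatible control data and the inductive construction of the controlled lift each require a nontrivial argument that you are (reasonably) taking on faith. For the role it plays in this paper, citing the result as the author does is the right call, and your sketch is a good account of what lies behind the citation.
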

By radially shrinking $\overline{N}$ if needed, we may assume that $N_{\beta}$ is contained in a larger open tubular neighborhood $N'_{\beta}$ of $\h_{2,\beta}^{red}$ such that the projection $N_{\beta}'\rightarrow \h_{2,\beta}^{red}$ agrees with the projection $\overline{N}_{\beta}\rightarrow \h_{2,\beta}^{red}$ on $\overline{N}_{\beta}$. 
\begin{lemma}\label{stratification}
The subset $\Theta_{\overline{N_{\beta}}}\subset \mathfrak{X}_2|_{N_{\beta}'}$ is closed. The subsets
\begin{equation}
\Theta_{N_{\beta}}-\Theta^{red}_{\beta}, \ \ \ \Theta_{\partial\overline{N}_{\beta}}, \ \ \ \Theta^{red}_{\beta}-\Theta^{red,sing}_{\beta},\ \ \ \Theta^{red,sing}_{\beta}
\end{equation}
of $\Theta_{\overline{N}_{\beta}}$ are locally closed in $\mathfrak{X}_2|_{N_{\beta}'}$ and form strata which endow $\Theta_{\overline{N}_{\beta}}$ with the structure of a Whitney stratified subset of $\mathfrak{X}_2|_{N_{\beta}'}$. With respect to this stratification, the projection  $\Theta_{\overline{N}_{\beta}}\rightarrow \h_{2,\beta}^{red}$ is a proper stratified submersion.
\end{lemma}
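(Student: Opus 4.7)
The statement contains three independent claims: closedness of $\Theta_{\overline{N}_{\beta}}$ in $\mathfrak{X}_2|_{N'_{\beta}}$, the Whitney property of the four-stratum decomposition, and the properness plus stratified-submersion property of the projection. I would prove them in that order.

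First, closedness is essentially formal. By the choice of $N'_{\beta}$, the closed tubular neighborhood $\overline{N}_{\beta}$ is closed inside $N'_{\beta}$, hence $\mathfrak{X}_2|_{\overline{N}_{\beta}}$ is closed in $\mathfrak{X}_2|_{N'_{\beta}}$. Since $\Theta \subset \mathfrak{X}_2$ is the zero locus of the holomorphic function $\vartheta$, it is closed, and so $\Theta_{\overline{N}_{\beta}}$ is closed in $\mathfrak{X}_2|_{N'_{\beta}}$. Each of the four listed subsets is then visibly an intersection of an open and a closed subset of $\mathfrak{X}_2|_{N'_{\beta}}$, hence locally closed.

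For the Whitney condition I would build on the Proposition asserting that $\Theta \supset \Theta^{red} \supset \Theta^{red,sing}$ Whitney-stratifies $\Theta$. Restricting that stratification to $\Theta_{\overline{N}_{\beta}}$ and to the $\beta$-component supplies the Whitney relations among the three strata $\Theta_{N_{\beta}}-\Theta^{red}_{\beta}$, $\Theta^{red}_{\beta}-\Theta^{red,sing}_{\beta}$, $\Theta^{red,sing}_{\beta}$. The genuinely new piece is $\Theta_{\partial\overline{N}_{\beta}}$. Since $\partial\overline{N}_{\beta}$ is disjoint from $\h_{2,\beta}^{red}$, this stratum does not meet the closures of the two reducible strata, so the Whitney conditions with them are vacuous. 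The only nontrivial incidence is that between $\Theta_{\partial\overline{N}_{\beta}}$ and the top smooth stratum $\Theta_{N_{\beta}}-\Theta^{red}_{\beta}$; here both pieces are smooth, and because we are free to shrink $\overline{N}_{\beta}$ radially inside $N'_{\beta}$ we may arrange that $\mathfrak{X}_2|_{\partial\overline{N}_{\beta}}$ meets $\Theta-\Theta^{red}$ transversely, at which point Whitney's conditions are automatic (or follow from the standard fact that transverse intersection with a smooth submanifold preserves a Whitney stratification, as recorded in \cite{goresky1988stratified}).

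For the stratified-submersion property I would check each stratum separately, composing in every case with the tubular retraction $\overline{N}_{\beta}\to \h_{2,\beta}^{red}$. On $\Theta_{N_{\beta}}-\Theta^{red}_{\beta}$ the projection is the restriction of the smooth fibration $\Theta-\Theta^{red}\cong \C'\to \cT_2$, so pre-composition with the retraction is a submersion. On $\Theta^{red}_{\beta}-\Theta^{red,sing}_{\beta}$, the lemma about $X_{\beta}\to \h_{2,\beta}^{red}$ already guarantees that each component is even a topologically trivial elliptic bundle. On $\Theta^{red,sing}_{\beta}$, the remark following that lemma asserts that each component maps submersively (in fact diffeomorphically) to $\h_{2,\beta}^{red}$. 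Finally, on $\Theta_{\partial\overline{N}_{\beta}}$ the projection factors through $\partial\overline{N}_{\beta}$: the map $\Theta_{\partial\overline{N}_{\beta}}\to \partial\overline{N}_{\beta}$ is a submersion by the transversality arranged above, while $\partial\overline{N}_{\beta}\to \h_{2,\beta}^{red}$ is a sphere bundle. Properness of $\Theta_{\overline{N}_{\beta}}\to\h_{2,\beta}^{red}$ follows by composing the properness of $\Theta\to \h_2$ (established in Section 2) with the properness of the closed disk-bundle $\overline{N}_{\beta}\to \h_{2,\beta}^{red}$.

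The main obstacle is the boundary stratum. One must choose $\overline{N}_{\beta}$ so that $\partial\overline{N}_{\beta}$ meets every stratum of $\Theta$ transversely, in order simultaneously to endow $\Theta_{\partial\overline{N}_{\beta}}$ with smooth structure, secure the Whitney conditions against the top stratum, and submersively project it to $\h_{2,\beta}^{red}$. This is a generic condition on the radius of the tubular neighborhood, but the argument must invoke it explicitly; once it is in place, the three checks above assemble to the desired conclusion.
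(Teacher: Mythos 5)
Your proof is essentially sound and follows roughly the same three-part structure as the paper's: closedness, local closedness and Whitney, then properness and stratified submersion. The closedness and local closedness steps, and the properness and stratum-by-stratum submersion checks, match the paper closely. Where you diverge is the Whitney step: you restrict the previously stated global Whitney stratification of $\Theta$ and then graft on the boundary stratum, whereas the paper argues directly from local models --- it cites \cite[pp.36--37]{goresky1988stratified} together with the observation that a simple normal crossings divisor inside a manifold with boundary is locally modeled on an arrangement of coordinate hyperplanes, whose flats stratify $\RR^n$ by \cite[p.236]{goresky1988stratified}. Both routes work; the paper's is more self-contained, while yours leans more heavily on the earlier (unproved) proposition.

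The one point I'd push back on is your treatment of the boundary as "the main obstacle," requiring a generic choice of the radius of $\overline{N}_{\beta}$. No genericity is needed. The projection $\Theta - \Theta^{red,sing}\rightarrow \h_2$ is already a submersion (in fact, over $\h_2 - \h_2^{red}$ it is the fiber bundle $\C'\rightarrow \cT_2$), and $\partial\overline{N}_{\beta}$ lies entirely in $\h_2 - \h_2^{red}$ (after arranging, once and for all, that the tubular neighborhoods of distinct components of $\h_2^{red}$ are disjoint). Consequently $\Theta_{\partial\overline{N}_{\beta}}$ is \emph{automatically} a smooth hypersurface of $\Theta - \Theta^{red}$ that is transverse to the fibers, the Whitney condition for the pair $\bigl(\Theta_{N_{\beta}}-\Theta^{red}_{\beta},\ \Theta_{\partial\overline{N}_{\beta}}\bigr)$ is automatic for a smooth manifold stratified by a smooth hypersurface, and $\Theta_{\partial\overline{N}_{\beta}}\rightarrow \h_{2,\beta}^{red}$ is a submersion by composing the submersions $\Theta_{\partial\overline{N}_{\beta}}\rightarrow\partial\overline{N}_{\beta}$ and $\partial\overline{N}_{\beta}\rightarrow\h_{2,\beta}^{red}$. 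This is exactly how the paper disposes of the boundary, and it removes the need to invoke a genericity principle explicitly.
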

\begin{proof}
It follows from continuity of the projection $\mathfrak{X}_2|_{N_{\beta}'}\rightarrow N_{\beta}'$ that $\Theta_{\overline{N}_{\beta}}$ is closed in $\mathfrak{X}_2|_{N_{\beta}'}$. The subset $\Theta_{N_{\beta}}-\Theta^{red}_{\beta}$ of $\mathfrak{X}_2|_{N'_{\beta}}$ is open while $\Theta_{\partial\overline{N}_{\beta}}$ is closed. Thus they are both locally closed in $\mathfrak{X}_2|_{N'_{\beta}}$. Since $\Theta^{red,sing}_{\beta}$ is closed in $\mathfrak{X}_2|_{N'_{\beta}}$ it is locally closed in $\mathfrak{X}_2|_{N'_{\beta}}$, as is the open subset $\Theta^{red}_{\beta}-\Theta^{red,sing}_{\beta}$ of the closed subset $\Theta_{\beta}^{red}$. Since $\Theta^{red}_{\beta}\subset \Theta_{N_{\beta}}$ is a divisor with simple normal crossings and $\Theta_{\overline{N}_{\beta}}$ is a manifold with boundary $\Theta_{\partial\overline{N}_{\beta}}$, these locally closed subsets are the strata of a Whitney stratification of $\Theta_{\overline{N}_{\beta}}$ by \cite[pp.36-37]{goresky1988stratified} and the fact that an arrangement of linear subspaces in $\RR^n$ induces a stratification of $\RR^n$ by its flats \cite[p.236]{goresky1988stratified}.

 The projection is proper because both of the maps $\Theta_{\overline{N}_{\beta}}\rightarrow \overline{N_{\beta}}$ and $\overline{N}_{\beta}\rightarrow \h_{2,\beta}^{red}$ are. The final statement follows from the fact that $\Theta\rightarrow \h_2$ is a stratified submersion, the fact that $\Theta_{\partial N_{\beta}}\rightarrow \partial\overline{N}_{\beta}$ is a submersion, and the fact that the bundle projections $\partial\overline{N}_{\beta}, N_{\beta}\rightarrow \h_{2,\beta}^{red}$ are submersions.
\end{proof}
Observe that the fiber of $\Theta_{\overline{N}_{\beta}}\rightarrow \h_{2,\beta}^{red}$ over $p_{\beta}$ is $\Theta_{\overline{\Delta}_{\beta}}$. This receives an induced stratification from the stratification on $\Theta_{\overline{N}_{\beta}}$; its strata are the intersections of $\Theta_{\overline{\Delta}_{\beta}}$ with the strata of $\Theta_{\overline{N}_{\beta}}$. 

\begin{proposition}\label{prop31}
There is a commutative diagram  
\begin{equation}
\xymatrix{
\Theta_{\overline{N}_{\beta}}\ar[rd]\ar[rr]^{\cong\ \ \ \ }& & \h_{2,\beta}^{red}\times \Theta_{\overline{\Delta}_{\beta}}\ar[ld]\\
 &\h_{2,\beta}^{red} &
}
\end{equation}
where the top horizontal map is a stratum-preserving homeomorphism and the diagonal maps are the natural projections.
\end{proposition}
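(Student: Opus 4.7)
The plan is to invoke Thom's First Isotopy Lemma, whose hypotheses have already been verified in Lemma \ref{stratification}. Essentially all of the real work has been done, and the proof should be a near-direct application.

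First I would observe that $\h_{2,\beta}^{red}$ is diffeomorphic to $\RR^4$. Since $\h_{2,\beta}^{red}$ is an $\Sp_2(\ZZ)$-translate of $\h_1 \times \h_1$ and the upper half-plane $\h_1$ is diffeomorphic to $\RR^2$, fixing a diffeomorphism $\psi : \h_{2,\beta}^{red} \xrightarrow{\cong} \RR^4$ sending $p_{\beta}$ to the origin is routine.

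Next I would apply Thom's First Isotopy Lemma to the composition $\psi \circ f : \Theta_{\overline{N}_{\beta}} \rightarrow \RR^4$, where $f$ denotes the projection appearing in Lemma \ref{stratification}. By that lemma, $f$ is a proper stratified submersion with respect to the Whitney stratification of $\Theta_{\overline{N}_{\beta}}$ whose strata are
\[
\Theta_{N_{\beta}}-\Theta^{red}_{\beta},\quad \Theta_{\partial\overline{N}_{\beta}},\quad \Theta^{red}_{\beta}-\Theta^{red,sing}_{\beta},\quad \Theta^{red,sing}_{\beta},
\]
and composing with the diffeomorphism $\psi$ preserves both properness and the stratified submersion property. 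Thom's lemma then produces a stratum-preserving homeomorphism
\[
h : \Theta_{\overline{N}_{\beta}} \xrightarrow{\cong} \RR^4 \times \bigl((\psi\circ f)^{-1}(0) \cap \Theta_{\overline{N}_{\beta}}\bigr)
\]
commuting with the projection to $\RR^4$.

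Finally I would identify the fiber: by construction the fiber of $\overline{N}_{\beta}\rightarrow \h_{2,\beta}^{red}$ over $p_{\beta}$ is $\overline{\Delta}_{\beta}$, so $(\psi\circ f)^{-1}(0) \cap \Theta_{\overline{N}_{\beta}} = \Theta_{\overline{\Delta}_{\beta}}$. Composing $h$ with $\psi^{-1}\times \mathrm{id}$ then gives the desired stratum-preserving homeomorphism $\Theta_{\overline{N}_{\beta}} \cong \h_{2,\beta}^{red} \times \Theta_{\overline{\Delta}_{\beta}}$, and compatibility with the projection to $\h_{2,\beta}^{red}$ is immediate from the corresponding compatibility in Thom's lemma. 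I do not anticipate any genuine obstacle in this proof; the proposition is really a packaging of Lemma \ref{stratification} together with Thom's First Isotopy Lemma, and the only minor points to check are the identification of $\h_{2,\beta}^{red}$ with Euclidean space and the identification of the distinguished fiber.
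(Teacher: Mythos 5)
Your proof is correct and follows the paper's approach exactly: both invoke Thom's First Isotopy Lemma applied to the proper stratified submersion $\Theta_{\overline{N}_{\beta}}\rightarrow \h_{2,\beta}^{red}$ established in Lemma \ref{stratification}. You simply spell out the routine intermediate steps (identifying $\h_{2,\beta}^{red}$ with $\RR^4$ and the fiber over $p_{\beta}$ with $\Theta_{\overline{\Delta}_{\beta}}$) that the paper leaves implicit.
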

\begin{proof}
By Lemma \ref{stratification}, we may apply Thom's First Isotopy Lemma to the projection map $\Theta_{\overline{N}_{\beta}}\rightarrow \h_{2,\beta}^{red}$. This gives the required result.
\end{proof}
\begin{corollary}
There is a homeomorphism of pairs 
$$(\Theta_{\overline{N}_{\beta}}, \Theta_{\overline{N}_{\beta}} - \Theta^{red}_{\beta})\xrightarrow{\cong} \h_{2,\beta}^{red}\times (\Theta_{\overline{\Delta}_{\beta}}, \Theta_{\overline{\Delta}^*_{\beta}})$$
which commutes with projection to $\h_{2,\beta}^{red}$. This lifts to a homeomorphism 
$$(\Thetatilde_{\overline{N}_{\beta}}, \Thetatilde_{\overline{N}_{\beta}} - \Thetatilde^{red}_{\beta})\xrightarrow{\cong} \h_{2,\beta}^{red}\times (\Thetatilde_{\overline{\Delta}_{\beta}}, \Thetatilde_{\overline{\Delta}^*_{\beta}})$$
which commutes with projection to $\h_{2,\beta}^{red}$. Equivalently, there is a homeomorphism 
\begin{equation}
(\chat_{\overline{N}_{\beta}},\chat_{\overline{N}_{\beta}} - \chat^{red}_{\beta})\cong \h_{2,\beta}^{red} \times (\chat_{\overline{\Delta}_{\beta}},\chat_{\overline{\Delta}^*_{\beta}})
\end{equation}
commuting with projection to $\h_{2,\beta}^{red}$.
\end{corollary}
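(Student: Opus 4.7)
The plan is to deduce all three homeomorphisms directly from Proposition~\ref{prop31}, covering-space theory, and the earlier identification $\chat\cong\Thetatilde$.

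For the first pair homeomorphism, Proposition~\ref{prop31} already supplies a stratum-preserving homeomorphism $\Theta_{\overline{N}_\beta}\cong\h_{2,\beta}^{red}\times\Theta_{\overline{\Delta}_\beta}$ over $\h_{2,\beta}^{red}$. By Lemma~\ref{stratification} the subset $\Theta^{red}_\beta\cap\Theta_{\overline{N}_\beta}$ is a union of strata, so the homeomorphism must send it to $\h_{2,\beta}^{red}\times(\Theta^{red}_\beta\cap\Theta_{\overline{\Delta}_\beta})$, and hence its complement to $\h_{2,\beta}^{red}\times\Theta_{\overline{\Delta}^*_\beta}$. This yields the first displayed pair homeomorphism.

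Next I would lift everything to the universal cover. Restricting $\Thetatilde\to\Theta$ to the preimage of $\Theta_{\overline{N}_\beta}$ produces a covering $\Thetatilde_{\overline{N}_\beta}\to\Theta_{\overline{N}_\beta}$, which transports via the product decomposition to a covering of $\h_{2,\beta}^{red}\times\Theta_{\overline{\Delta}_\beta}$. Since $\h_{2,\beta}^{red}$ is an $\Sp_2(\ZZ)$-translate of $\h_1\times\h_1$ and therefore contractible, we have $\pi_1(\h_{2,\beta}^{red}\times\Theta_{\overline{\Delta}_\beta})\cong\pi_1(\Theta_{\overline{\Delta}_\beta})$, so any such covering is the pullback along the second projection of a covering of $\Theta_{\overline{\Delta}_\beta}$. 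Restricting the total covering to the slice $\{p_\beta\}\times\Theta_{\overline{\Delta}_\beta}$, which under the product decomposition is exactly the fiber $\Theta_{\overline{\Delta}_\beta}\subset\Theta_{\overline{N}_\beta}$, identifies this covering as $\Thetatilde_{\overline{\Delta}_\beta}\to\Theta_{\overline{\Delta}_\beta}$ by definition of the latter as a preimage in $\Thetatilde$. The resulting homeomorphism $\Thetatilde_{\overline{N}_\beta}\cong\h_{2,\beta}^{red}\times\Thetatilde_{\overline{\Delta}_\beta}$ covers the stratum-preserving one downstairs and so again sends the reducible locus to a product, yielding the second pair homeomorphism. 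The third, phrased in terms of $\chat$, is then immediate from the earlier biholomorphism $\chat\cong\Thetatilde$ over $\h_2$, which identifies $\chat^{red}_\beta$ with $\Thetatilde^{red}_\beta$ and $\chat_{\overline{\Delta}_\beta}$ with $\Thetatilde_{\overline{\Delta}_\beta}$.

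The main obstacle is the covering-space step: one must identify the covering transported through the product decomposition as \emph{precisely} the covering $\Thetatilde_{\overline{\Delta}_\beta}\to\Theta_{\overline{\Delta}_\beta}$ rather than merely an abstractly isomorphic one, so that the product homeomorphism genuinely lifts the one downstairs. This reduces to naturality of restriction of a covering along the inclusion of the slice $\{p_\beta\}\times\Theta_{\overline{\Delta}_\beta}\hookrightarrow\h_{2,\beta}^{red}\times\Theta_{\overline{\Delta}_\beta}$, which is straightforward once the product splitting is in hand but is the place where the contractibility hypothesis is really used.
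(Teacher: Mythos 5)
The paper does not spell out a proof of this corollary; it is presented as an immediate consequence of Proposition~\ref{prop31}. Your argument is a correct and natural way to fill in the missing details, and it follows the route the paper clearly intends: the first pair homeomorphism comes directly from the stratum-preserving property of the Thom--Mather homeomorphism, the second is obtained by lifting to the universal cover, and the third is just the translation through the biholomorphism $\chat\cong\Thetatilde$.

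Your covering-space step is sound, and the concern you raise at the end is the right one. To close it cleanly, observe that the homeomorphism $\phi$ supplied by Thom's First Isotopy Lemma can be taken to restrict to the identity on the central fiber $\Theta_{\overline{\Delta}_\beta}$, and that the covering $\Thetatilde_{\overline{N}_\beta}\to\Theta_{\overline{N}_\beta}$ is classified by the kernel of the composite $\pi_1(\Theta_{\overline{N}_\beta})\to\pi_1(\Theta)\cong H$; this composite factors through $\pi_1(\Theta_{\overline{\Delta}_\beta})$ because, with $\h_{2,\beta}^{red}$ contractible, the inclusion of the central fiber is a $\pi_1$-isomorphism. Transporting through $\phi$ therefore sends this covering \emph{exactly} to the product covering $\h_{2,\beta}^{red}\times\Thetatilde_{\overline{\Delta}_\beta}\to\h_{2,\beta}^{red}\times\Theta_{\overline{\Delta}_\beta}$, not merely to an abstractly isomorphic one, and the lift of $\phi$ exists by the lifting criterion. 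Everything else in your write-up is correct.
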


We now derive a few consequences of Proposition \ref{prop31}.
\begin{lemma}
The total space of the family $\chat_{\overline{N}_{\beta}}$ is homotopy equivalent to a 1-complex.
\end{lemma}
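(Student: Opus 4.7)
The plan is to show that $\chat_{\overline{N}_\beta}$ has the homotopy type of a $1$-complex by reducing successively to simpler subspaces. First, by the corollary to Proposition~\ref{prop31} and the contractibility of $\h_{2,\beta}^{red}$ (an $\Sp_2(\ZZ)$-translate of $\h_1\times \h_1$), one has $\chat_{\overline{N}_\beta}\simeq \chat_{\overline{\Delta}_\beta}$, where $\overline{\Delta}_\beta$ is a closed transverse disk meeting $\h_{2,\beta}^{red}$ only at $p_\beta$.

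Next I would construct a strong deformation retraction of $\chat_{\overline{\Delta}_\beta}$ onto the central fiber $\chat_{p_\beta}$. The downstairs family $\C|_{\overline{\Delta}_\beta}\to \overline{\Delta}_\beta$ is a nodal degeneration of compact-type genus-$2$ curves; expanding $\vartheta$ near $(\tau_1\oplus\tau_2,0)$ and applying the heat equation exhibits the total space near the node as locally biholomorphic to $\{xy=t\}$. The gradient flow of $|xy|^2$, whose orbits preserve $|x|^2-|y|^2$ and thus drive each point to the nearer axis, strong deformation retracts the local piece $\{|xy|\leq\epsilon\}$ onto $\{xy=0\}$. Away from the node, the stratified submersion of Lemma~\ref{stratification} together with Thom's First Isotopy Lemma trivializes $\C|_{\overline{\Delta}_\beta}$ over the contractible disk, giving a retraction onto the central fiber there. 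Patching the two retractions with a smooth cutoff supported near the node produces a strong deformation retraction $\C|_{\overline{\Delta}_\beta}\to \C_{p_\beta}$, which by homotopy lifting along the covering $\chat\to \C$ lifts to a strong deformation retraction $\chat_{\overline{\Delta}_\beta}\to \chat_{p_\beta}$.

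Finally I would identify $\chat_{p_\beta}$ up to homotopy with an explicit $1$-complex. After an $\Sp_2(\ZZ)$-translation we may take $p_\beta=\tau_1\oplus\tau_2$, and then by the theta product formula together with the fact that $\vartheta_{(1/2,1/2)}(\tau,\cdot)$ has simple zeros precisely on the lattice $\Lambda(\tau)$, the fiber $\chat_{p_\beta}$ is the union in $\CC^2$ of the vertical lines $\{z_1=m+n\tau_1\}$ and horizontal lines $\{z_2=m+n\tau_2\}$, $m,n\in\ZZ$. Lines of the same type are disjoint; any vertical meets any horizontal transversely at exactly one lattice point. On each line I would pick an embedded tree passing through all its lattice intersection points; each such tree is a cofibrant contractible subcomplex of the contractible $\CC$, so the inclusion is a strong deformation retract. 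Performing these retractions simultaneously fixes all intersection points, so the combined retraction exhibits $\chat_{p_\beta}$ as homotopy equivalent to the $1$-complex obtained as the union of these trees.

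The main obstacle is the middle step: gluing the local $\{xy=t\}$ retraction near the node with the stratified trivialization away from it into a single strong deformation retract of $\C|_{\overline{\Delta}_\beta}$ that fixes the entire singular fiber. The remaining ingredients — homotopy lifting along $\chat\to\C$, and retracting a complex line onto a spanning tree through a discrete set of points — are standard.
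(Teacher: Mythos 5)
Your proof is correct in outline but takes a genuinely different route from the paper's. The paper first observes (taking $\overline{\Delta}_\beta$ small enough that $\C_{\overline{\Delta}_\beta}\simeq\C_0\cong T^2\vee T^2$) that $\C_{\overline{\Delta}_\beta}$ is aspherical with $\pi_1\cong\Gamma:=\pi_1(T^2\vee T^2)$, so its $H$-cover $\chat_{\overline{\Delta}_\beta}$ is aspherical with $\pi_1\cong[\Gamma,\Gamma]$; the central fiber $\chat_0$ is aspherical with the same fundamental group, so the two are homotopy equivalent as $K(G,1)$'s; finally $\chat_0$ is a $1$-dimensional Stein space, and Hamm's theorem gives CW-dimension $1$. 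You instead try to build an explicit strong deformation retraction of $\C_{\overline{\Delta}_\beta}$ onto $\C_0$ (local $\{xy=t\}$ model glued to the stratified trivialization), lift it through the covering, and then retract the resulting union of complex lines onto a spanning forest. Both approaches rest on the same underlying geometric fact --- that the total space over a small disk collapses to the nodal central fiber --- but the paper invokes it only up to homotopy and then lets $K(G,1)$ theory and Hamm's theorem do the rest, which sidesteps exactly the technical gap you flag (patching the local retraction at the node with the trivialization away from it). Your explicit description of $\chat_0$ as the grid of lines $\{z_1=m+n\tau_1\}\cup\{z_2=m+n\tau_2\}$ and the retraction onto a tree is a legitimate, more elementary replacement for the appeal to Hamm, and the homotopy-lifting step you use to move the retraction upstairs is sound. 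If you want a cleaner version of your argument, note that once you know $\C_{\overline{\Delta}_\beta}\simeq\C_0$ (even just up to homotopy, not as a strong deformation retract), the asphericity argument gives the homotopy equivalence of covers for free --- you never actually need to construct or lift a deformation retraction.
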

\begin{proof}
We will first show that the total space of the family $\chat_{\overline{\Delta}_{\beta}}\rightarrow \overline{\Delta}_{\beta}$ is homotopy equivalent to a 1-dimensional CW-complex. This will then be combined with Corollary 23 with Lemma 24.

We may assume that $\overline{\Delta}_{\beta}$ is sufficiently small that $\cC_{\overline{\Delta}_{\beta}}$ is homotopy equivalent to its central fiber $\C_0$, which is topologically the wedge $T^2\vee T^2$ of two 2-tori. Consequently, it is an aspherical space with fundamental group $\Gamma = \pi_1(T^2\vee T^2,*)$. 
Then $\chat_{\Delta_{\beta}}$ is an $H$-covering of $\C_{\overline{\Delta}_{\beta}}$ and so is an aspherical space with fundamental group isomorphic to the commutator subgroup $[\Gamma,\Gamma]$. On the other hand, the central fiber $\chat_0$ of $\C_0$ in $\chat_{\overline{\Delta}_{\beta}}$ is another such space and so there is a homotopy equivalence $\chat_0\simeq \chat_{\overline{\Delta}_{\beta}}$. But $\chat_0$ is a 1-dimensional Stein space and so, by a theorem of Hamm \cite{hamm1983homotopietyp} it has CW dimension 1.
\end{proof}
\begin{lemma}
The inclusion $\chat_{\partial \overline{\Delta}_{\beta}}\hookrightarrow \chat_{\overline{N}_{\beta}}-\chat^{red}_{\beta}$ is a homotopy equivalence. Thus $\chat_{\overline{N}_{\beta}}-\chat^{red}_{\beta}$ has the homotopy type of a CW complex of dimension at most 2.
\end{lemma}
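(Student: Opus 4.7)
The plan is to reduce the claim to the product decomposition supplied by the Corollary to Proposition \ref{prop31}, which provides a homeomorphism of pairs
$$
(\chat_{\overline{N}_{\beta}},\chat_{\overline{N}_{\beta}} - \chat^{red}_{\beta})\cong \h_{2,\beta}^{red}\times (\chat_{\overline{\Delta}_{\beta}},\chat_{\overline{\Delta}^*_{\beta}})
$$
commuting with projection to $\h_{2,\beta}^{red}$. The first step is to identify how $\chat_{\partial \overline{\Delta}_\beta}$ sits inside this product. Because the circle $\partial \overline{\Delta}_\beta$ lies entirely in the single disk fiber of $\overline{N}_\beta\to \h_{2,\beta}^{red}$ over the point $p_\beta$, the subset $\chat_{\partial \overline{\Delta}_\beta}$ corresponds to the slice $\{p_\beta\}\times \chat_{\partial \overline{\Delta}_\beta}\subset \h_{2,\beta}^{red}\times \chat_{\overline{\Delta}^*_\beta}$. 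The inclusion in the statement therefore factors as
$$
\chat_{\partial \overline{\Delta}_\beta}\;\cong\; \{p_\beta\}\times \chat_{\partial \overline{\Delta}_\beta}\;\hookrightarrow\; \h_{2,\beta}^{red}\times \chat_{\partial \overline{\Delta}_\beta}\;\xrightarrow{\mathrm{id}\times j}\; \h_{2,\beta}^{red}\times \chat_{\overline{\Delta}^*_\beta},
$$
with $j:\chat_{\partial \overline{\Delta}_\beta}\hookrightarrow \chat_{\overline{\Delta}^*_\beta}$ the natural inclusion, so it suffices to verify that each of the two maps is a homotopy equivalence.

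I would handle these in turn. The slice inclusion is a homotopy equivalence because $\h_{2,\beta}^{red}$ is contractible: it is an $\Sp_2(\ZZ)$-translate of $\h_1\times \h_1$, the product of two upper half planes. For the second, I would use that $\chat-\chat^{red}\to \h_2-\h_2^{red}$ is a fiber bundle with fiber $S_2^{ab}$. Since $\overline{\Delta}^*_\beta$ is a small punctured closed disk meeting $\h_2^{red}$ only at the removed point $p_\beta$, restricting gives a fiber bundle $\chat_{\overline{\Delta}^*_\beta}\to \overline{\Delta}^*_\beta$. The base is a closed annulus deformation retracting onto $\partial \overline{\Delta}_\beta$, and lifting this retraction by the homotopy lifting property will show that $j$ is a homotopy equivalence.

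For the concluding CW dimension bound, I would analyze $\chat_{\partial \overline{\Delta}_\beta}$ directly: it is a fiber bundle over $\partial \overline{\Delta}_\beta\cong S^1$ with fiber the universal abelian cover $S_2^{ab}$. As a non-compact $2$-manifold without boundary, $S_2^{ab}$ is homotopy equivalent to a $1$-dimensional CW complex (its fundamental group $\pi'$ is free, and the surface deformation retracts onto a spine). A fiber bundle over $S^1$ is homotopy equivalent to the mapping torus of its monodromy, and replacing the fiber up to homotopy by a $1$-complex yields a $2$-dimensional CW complex. The main obstacle --- though ultimately a minor one --- will be the bookkeeping to confirm the identification of $\chat_{\partial \overline{\Delta}_\beta}$ with the slice $\{p_\beta\}\times \chat_{\partial \overline{\Delta}_\beta}$ under the product decomposition, and to ensure the factorization above matches the natural inclusion on the nose; once this is in hand, the remainder is standard fibration theory and surface topology.
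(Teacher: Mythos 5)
Your argument is correct and matches the paper's proof in its first half, while taking a genuinely different route for the dimension bound. For the homotopy equivalence, the paper likewise reduces via the Corollary to Proposition \ref{prop31} to the inclusion $\chat_{\partial\overline{\Delta}_\beta}\hookrightarrow\chat_{\overline{\Delta}^*_\beta}$ (indeed the paper simply asserts both steps as consequences of that corollary); you fill in the details, namely the slice identification (which works precisely because the homeomorphism of the corollary commutes with projection to $\h_{2,\beta}^{red}$), contractibility of $\h_{2,\beta}^{red}\cong\h_1\times\h_1$, and lifting the retraction of the closed annulus onto its boundary circle via the covering homotopy property of the fiber bundle. For the CW dimension bound, your approach diverges from the paper's: the paper observes that the fundamental group $\Gamma$ of $\chat_{\partial\overline{\Delta}_\beta}$ sits in an extension $1\to\pi'\to\Gamma\to\ZZ\to 1$ of free groups and therefore has geometric dimension at most $2$, citing Brown --- which additionally requires the (true but unstated) observation that $\chat_{\partial\overline{\Delta}_\beta}$ is aspherical. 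Your mapping torus argument, replacing the fiber $S_2^{ab}$ by a homotopy equivalent $1$-complex and observing that a mapping torus of a $1$-complex is a $2$-complex, is equally valid, is more concrete, and sidesteps both the group-theoretic geometric dimension result and the asphericity discussion; what it buys is self-containment, at the minor cost of a short argument that a fiber bundle over $S^1$ is homotopy equivalent to the mapping torus of its monodromy with the fiber replaced up to homotopy.
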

\begin{proof}
By Corollary 23, the inclusion $\chat_{\overline{\Delta}_{\beta}^*}\hookrightarrow \chat_{\overline{N}_{\beta}}-\chat^{red}_{\beta}$ is a homotopy equivalence. Since the inclusion $\chat_{\partial\overline{\Delta}_{\beta}}\hookrightarrow \chat_{\overline{\Delta}_{\beta}^*}$ is also a homotopy equivalence, the result follows after we note that the fundamental group $\Gamma$ of $\chat_{\partial \overline{\Delta}_{\beta}}$ is given by an extension
\begin{equation}
1\rightarrow \pi'\rightarrow \Gamma\rightarrow \ZZ\rightarrow 1
\end{equation}
and therefore has geometric dimension at most 2 (see \cite[p.188]{brown1982cohomology}).
\end{proof}
\begin{remark}
\emph{
Actually, $\chat_{\overline{N}_{\beta}}-\chat^{red}_{\beta}$ has geometric dimension 2. By the Hochschild-Serre spectral sequence, there is an isomorphism $H_2(\Gamma, \ZZ)\cong H_1(\ZZ, H_1(\pi'))$. It is readily verified that this latter group is non-zero. 
}
\end{remark}
Equipped with these results, let us now return to our analysis of the long exact sequence of the terms $H_3(\chat_{N_{\beta}},\chat_{N_{\beta}}-\chat^{red}_{\beta})$. An application of Lemmas 24 and 25 reveals that the long exact sequence of homology for the pair $(\chat_{N_{\beta}},\chat_{N_{\beta}}-\chat^{red}_{\beta})$ has a segment
\begin{equation}
0\rightarrow H_3(\chat_{N_{\beta}},\chat_{N_{\beta}}-\chat^{red}_{\beta})\xrightarrow{\partial} H_2(\chat_{N_{\beta}}-\chat^{red}_{\beta})\rightarrow 0.
\end{equation}
Piecing all of these isomorphisms together, we now have a commutative diagram
\begin{equation}
\xymatrix{
H_3(\chat,\chat-\chat^{red})\ar[r]^{\partial}& H_2(\chat-\chat^{red})\\
\bigoplus_{\beta}H_3(\chat_{\overline{\Delta}_{\beta}},\chat_{\overline{\Delta}_{\beta}}- \chat^{red}_{\beta} )\ar[r]^{\ \ \ \ \ \ \ \ \partial}_{ \ \ \ \ \ \ \cong}\ar[u]^{\cong} & \bigoplus_{\beta}H_2(\chat_{\partial\overline{\Delta}_{\beta}})\ar@{^{(}->}[u]
}
\end{equation}

This implies that there is an exact sequence
\begin{equation}
0\rightarrow \cF'\rightarrow H_2(\chat)\rightarrow H_2(\chat,\chat-\chat^{red})
\end{equation}
where $\cF' \cong H_2(\chat-\chat^{red}) /H_2(\chat_{\overline{\Delta}_{\beta}})$. Since $H_2(\chat,\chat-\chat^{red})$ is a free abelian group (it is freely generated by small discs transverse to the components of $\chat^{red}$), the image of $H_2(\chat)$ in $H_2(\chat,\chat-\chat^{red})$ is also a free abelian group. Thus $H_2(\chat)$ is an extension of a free abelian group by a free abelian group and is therefore also free abelian.

Putting all of this together, we have deduced 
\begin{proposition}\label{prop31}
The boundary map $\partial: H_3(\chat,\chat-\chat^{red})\rightarrow H_2(\chat-\chat^{red})$ is injective, and its cokernel is a free abelian group. Consequently, the group $H_3(\chat)$ vanishes and $H_2(\chat)$ is a free abelian group.
\end{proposition}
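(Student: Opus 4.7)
The plan is to assemble the exact-sequence machinery developed in the preceding pages into the commutative diagram displayed just above the statement. All the technical ingredients are already in hand; what remains is to chase the diagram and extract the stated conclusions.

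First I would write down the relevant segment of the long exact sequence of the pair $(\chat, \chat - \chat^{red})$. Combining the vanishing $H_k(\chat) = 0$ for $k \geq 4$ from Proposition \ref{prop22} with the vanishing $H_k(\chat - \chat^{red}) = 0$ for $k \geq 3$ from Lemma \ref{lemma26} reduces the entire analysis to the six-term sequence
\begin{equation*}
0 \to H_3(\chat) \to H_3(\chat, \chat - \chat^{red}) \xrightarrow{\partial} H_2(\chat - \chat^{red}) \to H_2(\chat) \to H_2(\chat, \chat - \chat^{red}).
\end{equation*}

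Second, I would identify the boundary map locally. Excision around tubular neighborhoods $N_\beta$ of the components $\h_{2,\beta}^{red}$ decomposes $H_3(\chat, \chat - \chat^{red})$ as $\bigoplus_\beta H_3(\chat_{N_\beta}, \chat_{N_\beta} - \chat^{red}_\beta)$, and the product structure furnished by Thom's First Isotopy Lemma collapses each summand to $H_3(\chat_{\overline{\Delta}_\beta}, \chat_{\overline{\Delta}_\beta^*})$. Because $\chat_{\overline{\Delta}_\beta}$ has the homotopy type of a $1$-complex and $\chat_{\overline{\Delta}_\beta^*}$ the homotopy type of a $2$-complex, the local long exact sequence of the pair $(\chat_{\overline{\Delta}_\beta}, \chat_{\overline{\Delta}_\beta^*})$ degenerates to an isomorphism $\partial_\beta : H_3(\chat_{\overline{\Delta}_\beta}, \chat_{\overline{\Delta}_\beta^*}) \xrightarrow{\cong} H_2(\chat_{\overline{\Delta}_\beta^*})$, and the latter group is naturally identified with $H_2(\chat_{\partial \overline{\Delta}_\beta})$ via the homotopy equivalence $\chat_{\partial \overline{\Delta}_\beta} \hookrightarrow \chat_{\overline{\Delta}_\beta^*}$.

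Third, I would assemble the local data globally using the direct sum decomposition $H_2(\chat - \chat^{red}) = \cF \oplus \bigoplus_\beta H_2(\chat_{\partial \Delta_\beta})$ established in the previous section, where $\cF$ is free abelian. The commutative diagram displayed just above the statement shows that $\partial$ factors as the isomorphism $\bigoplus_\beta \partial_\beta$ followed by the split inclusion of the second summand. Hence $\partial$ is injective and $\operatorname{coker}\partial \cong \cF$ is free abelian. Exactness at $H_3(\chat, \chat - \chat^{red})$ immediately forces $H_3(\chat) = \ker \partial = 0$.

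Finally, the tail of the sequence expresses $H_2(\chat)$ as an extension
\begin{equation*}
0 \to \cF \to H_2(\chat) \to \operatorname{Im}\bigl(H_2(\chat) \to H_2(\chat, \chat - \chat^{red})\bigr) \to 0.
\end{equation*}
Since $H_2(\chat, \chat - \chat^{red})$ is freely generated by small normal disks transverse to the irreducible components of $\chat^{red}$, the quotient on the right is a subgroup of a free abelian group, hence free abelian. Thus $H_2(\chat)$ is an extension of one free abelian group by another, and so is itself free abelian. The main obstacle is purely bookkeeping: verifying that the excision decomposition, the Thom product structure, and the direct sum decomposition of $H_2(\chat - \chat^{red})$ all fit together compatibly with the global boundary map. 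No further substantive computation is required, since every nontrivial input has been carried out in the preceding lemmas.
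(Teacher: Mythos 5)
Your proof follows exactly the same route as the paper: the long exact sequence of the pair $(\chat,\chat-\chat^{red})$, the excision decomposition into the summands $H_3(\chat_{N_\beta},\chat_{N_\beta}-\chat^{red}_\beta)$, the product structure from Thom's First Isotopy Lemma collapsing each summand onto a tubular slice, the local isomorphism $\partial_\beta$ coming from the homotopy-type bounds on $\chat_{\overline{\Delta}_\beta}$ (1-complex) and $\chat_{\overline{\Delta}_\beta^*}$ (2-complex), and the assembly via the direct sum decomposition $H_2(\chat-\chat^{red}) \cong \cF\oplus\bigoplus_\beta H_2(\chat_{\partial\Delta_\beta})$. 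Your reading of the cokernel as $\cF$ and the closing extension argument for $H_2(\chat)$ are both correct and match the paper's, so this is a faithful reconstruction.
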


We are now in a position to deduce the homotopy type of $\chat$. Our strategy is to apply the homological form of Whitehead's Theorem (see \cite[p.367]{hatcher606algebraic}).
\begin{proposition}
There is a homotopy equivalence $\displaystyle \bigvee_{I} S^2\rightarrow \chat$ for some index set $I$ whose cardinality matches the rank of $H_2(\chat)$.
\end{proposition}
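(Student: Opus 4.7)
The plan is to assemble the homological computations of Propositions \ref{prop22} and \ref{prop31} into a direct construction of a map $\bigvee_I S^2 \to \chat$ and then verify the hypotheses of the homological Whitehead theorem. First I would collect the homology of $\chat$ in one place: $\chat$ is connected, so $H_0(\chat) = \ZZ$; since $\chat$ is the universal cover of $\cC$ it is simply connected, so $H_1(\chat) = 0$; $H_2(\chat)$ is a free abelian group by Proposition \ref{prop31}; $H_3(\chat) = 0$ by Proposition \ref{prop31}; and $H_k(\chat) = 0$ for $k \geq 4$ by Proposition \ref{prop22}. Let $I$ be an indexing set for a $\ZZ$-basis of $H_2(\chat)$, and fix such generators $\{\alpha_i\}_{i \in I}$.

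Next I would realize each $\alpha_i$ as a map $f_i : S^2 \to \chat$. Because $\chat$ is simply connected, the Hurewicz map $\pi_2(\chat) \to H_2(\chat)$ is an isomorphism, so there exist pointed maps $f_i : (S^2, *) \to (\chat, *)$ whose homotopy classes map to $\alpha_i$. Wedging these together at the base point gives a pointed map
\begin{equation*}
f : \bigvee_{i \in I} S^2 \longrightarrow \chat.
\end{equation*}
Both spaces are simply connected: the wedge is simply connected because each summand is, and $\chat$ is the universal cover. Both have the homotopy type of a CW complex --- the wedge by construction, and $\chat$ because it is a complex analytic space.

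Finally I would check that $f_*$ is an isomorphism on every integral homology group. In degree $0$ this is immediate; in degree $2$ the induced map sends the canonical generators of $H_2(\bigvee_I S^2) = \bigoplus_I \ZZ$ to the $\alpha_i$, hence is an isomorphism by our choice of basis; in all other positive degrees both groups vanish, so the map is trivially an isomorphism. By the homological form of Whitehead's theorem (\cite[p.367]{hatcher606algebraic}), a map of simply connected CW complexes inducing isomorphisms on integral homology is a homotopy equivalence, and so $f$ is the desired homotopy equivalence. There is really no substantive obstacle at this stage, since all of the genuinely difficult work --- the vanishing of $H_k(\chat)$ for $k \neq 0, 2$ and the freeness of $H_2(\chat)$ --- has already been carried out; the only mild subtlety to note explicitly is that we must invoke simple-connectivity of $\chat$ (to both apply Hurewicz and to invoke Whitehead), which follows tautologically from $\chat$ being the universal cover of $\cC$.
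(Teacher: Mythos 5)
Your proof is correct and follows essentially the same route as the paper: realize a basis of $H_2(\chat)$ via Hurewicz as maps $S^2 \to \chat$, wedge them, verify isomorphisms on all integral homology (using the vanishing of $H_k(\chat)$ for $k \neq 0,2$ and freeness of $H_2(\chat)$), and conclude via the homological Whitehead theorem for simply connected CW complexes. The only difference is that you spell out the degree-by-degree check of $f_*$ more explicitly, which the paper leaves implicit.
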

\begin{proof}
Since $\chat$ is simply-connected, the Hurewicz Theorem gives an isomorphism $\pi_2(\chat)\rightarrow H_2(\chat)$. Thus each generator of $H_2(\chat)$ is represented by a continuous map $S^2\rightarrow \chat$. By taking the wedge of all these maps, we obtain a continuous map 
\begin{equation}
f:\bigvee_{I} S^2\rightarrow \chat
\end{equation}
which induces isomorphisms $f_*:H_{\bullet}\left(\bigvee S^2\right)\xrightarrow{\cong} H_{\bullet}(\chat)$ on all homology groups. Here $I$ is an index set whose cardinality matches the rank of $H_2(\chat)$. Since $\bigvee S^2$ and $\chat$ are both simply-connected and each admits the structure of a CW complex, Whitehead's Theorem implies that $f$ is a homotopy equivalence.
\end{proof}

\subsection{\texorpdfstring{The Rank of $H_2(\chat)$}{The rank of H2(chat,ZZ)}}
We will now show that the rank of $H_2(\chat)$ is infinite. The proof is not constructive, but in the next section we give a way of producing reasonably explicit elements. 
\\\\\indent
We begin with the following observation. Since $\chat$ covers $\C$ with covering group $H$, there is a Hurewicz fibration
$$\chat\rightarrow \C\rightarrow K(H,1)$$
and therefore a spectral sequence
\begin{equation}
E^2_{s,t} = H_s(K(H,1), H_t(\chat))\implies H_{s+t}(\C)
\end{equation}
converging to the homology of $\C$. 

By the computation of the homology of $\chat$, the $E^2$ page of this spectral sequence is rather sparse. It has only two nonzero rows, the relevant parts of which are
\begin{equation*}
\begin{tabular}{l|r}
 & \xymatrix{
   0&0&0&0 \\
  H_0(H, H_2(\chat))&  H_1(H, H_2(\chat))& H_2(H, H_2(\chat)) & H_3(H, H_2(\chat))   \\
  0&0&0&0  \\
  \ZZ & H &\Lambda^2 H & \Lambda^3H 
  }\\
\hline\\
\end{tabular}
\end{equation*}
where $H_s(H, H_2(\chat)):=H_s(K(H,1), H_2(\chat))$. This spectral sequence degenerates at the $E^4$ page. Consequently, there is an isomorphism
\begin{equation}
H_2(\C,\QQ)\cong \Lambda^2H_{\QQ} \oplus H_0(H, H_2(\chat,\QQ))/d^2_{3,0}(\Lambda^3H)\otimes {\QQ}.
\end{equation}
where $H_{\QQ} = H\otimes_{\ZZ}\QQ$. Since $\Lambda^j H_{\QQ}$ is finite-dimensional, the group $H_2(\chat)$ has infinite rank if $H_2(\C)$ does. We will demonstrate that $H_2(\C)$ does not have finite rank by showing that it surjects onto a free abelian group of infinite rank.
\\\\\indent 
 
The long exact sequence of integral homology for the pair $(\C,\C-\C^{red})$ has a segment
fitting into a commutative diagram
\begin{equation}
\xymatrix{
& &H_1(\C_{\Omega_0})\ar@{^{(}->}[d] & & \\
H_2(\C) \ar[r]& H_2(\C, \C-\C^{red}) \ar[r]\ar[d]& H_1(\C-\C^{red})\ar[r]\ar@{^{}->>}[d] & H_1(\C) \ar[r] & 0 \\
& H_2(\h_2,\h_2-\h_2^{red}) \ar[r]^{\ \ \cong} & H_1(\h_2-\h_2^{red}) & &
}
\end{equation}
where $C_{\Omega_0}$ is any smooth fiber. With the aid of this sequence, one proves
\begin{lemma}
The sequence
\begin{equation}
H_2(\C)\rightarrow H_2(\C,\C-\C^{red})\rightarrow H_2(\h_2,\h_2-\h_2^{red})\rightarrow 0
\end{equation}
is exact.
\end{lemma}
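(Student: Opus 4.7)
The plan is to chase the commutative square displayed just above the statement. Because $\h_2$ is contractible, the lower connecting homomorphism $\partial_{\h_2}\colon H_2(\h_2,\h_2-\h_2^{red}) \to H_1(\h_2-\h_2^{red})$ is an isomorphism. Let $\partial_{\C}$ denote the upper connecting homomorphism and let $q\colon H_1(\C-\C^{red}) \twoheadrightarrow H_1(\h_2-\h_2^{red})$ denote the right vertical surjection. Commutativity identifies the left vertical map with $\partial_{\h_2}^{-1}\circ q\circ \partial_{\C}$, so it suffices to show that $q\circ \partial_{\C}$ is surjective and has kernel equal to the image of $H_2(\C)\to H_2(\C,\C-\C^{red})$.

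The first key step is to establish the decomposition
\begin{equation*}
H_1(\C-\C^{red}) \;=\; H_1(\C_{\Omega_0}) \;\oplus\; \mathrm{im}(\partial_{\C}).
\end{equation*}
By the preceding corollary, the composition $H_1(\C_{\Omega_0}) \hookrightarrow H_1(\C-\C^{red}) \to H_1(\C)$ is an isomorphism, which exhibits $H_1(\C_{\Omega_0})$ as a splitting of the surjection $H_1(\C-\C^{red}) \twoheadrightarrow H_1(\C)$. Its complementary summand is $\ker(H_1(\C-\C^{red}) \to H_1(\C))$, which by exactness of the top row equals $\mathrm{im}(\partial_{\C})$.

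The second key step is the identification $\ker(q) = H_1(\C_{\Omega_0})$. One inclusion is immediate, since a fiber projects to a point. For the reverse I would invoke the 5-term exact sequence of the Serre spectral sequence for the fibration $S_2 \to \C-\C^{red} \to \cT_2 \cong \h_2-\h_2^{red}$:
\begin{equation*}
H_2(\cT_2) \to H_1(S_2) \to H_1(\C-\C^{red}) \xrightarrow{q} H_1(\h_2-\h_2^{red}) \to 0
\end{equation*}
(the Torelli group acts trivially on $H_1(S_2)$ by definition, so coefficient coinvariants cause no issue). The injectivity of $H_1(\C_{\Omega_0}) \hookrightarrow H_1(\C-\C^{red})$ already recorded in the diagram forces the transgression $H_2(\cT_2) \to H_1(S_2)$ to vanish, so $\ker(q)$ coincides with the image of $H_1(S_2) = H_1(\C_{\Omega_0})$.

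Combining the two decompositions gives $\mathrm{im}(\partial_{\C}) \cap \ker(q) = 0$, so $q$ restricts to an isomorphism $\mathrm{im}(\partial_{\C}) \xrightarrow{\cong} H_1(\h_2-\h_2^{red})$. Surjectivity of $q\circ \partial_{\C}$ is then immediate. For exactness at $H_2(\C,\C-\C^{red})$: if $\alpha$ maps to zero in $H_2(\h_2,\h_2-\h_2^{red})$, then $\partial_{\C}(\alpha) \in \mathrm{im}(\partial_{\C})\cap \ker(q) = 0$, so $\alpha \in \ker(\partial_{\C}) = \mathrm{im}(H_2(\C)\to H_2(\C,\C-\C^{red}))$ by exactness of the top row. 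The main obstacle, in my view, is the identification $\ker(q) = H_1(\C_{\Omega_0})$, which rests on the (non-formal) injectivity of the fiber inclusion on $H_1$; everything else is a routine diagram chase.
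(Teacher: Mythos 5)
Your proof is correct, and it is a fleshed-out version of the diagram chase the paper sketches. The paper's one-line proof names only the ``crucial observation'' (that $H_1(\C_{\Omega_0}) \to H_1(\C - \C^{red}) \to H_1(\C)$ is an isomorphism), which supplies the splitting $H_1(\C-\C^{red}) = H_1(\C_{\Omega_0}) \oplus \mathrm{im}(\partial_{\C})$ of your first step; but as you correctly note, that observation alone only yields $\ker(q) \supseteq H_1(\C_{\Omega_0})$, and one genuinely needs equality to conclude $\mathrm{im}(\partial_{\C}) \cap \ker(q) = 0$, without which exactness at $H_2(\C,\C-\C^{red})$ does not follow. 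Your identification $\ker(q) = H_1(\C_{\Omega_0})$ via the 5-term Serre sequence for $S_2 \to \C-\C^{red} \to \cT_2$, using the already-recorded injectivity of $H_1(\C_{\Omega_0}) \hookrightarrow H_1(\C-\C^{red})$ to kill the transgression, is a clean way to establish exactly the fact the paper uses implicitly. A slightly more direct alternative: since $\cT_2 \cong \h_2 - \h_2^{red}$ is a $K(T_2,1)$ and $T_2$ is free by Mess's theorem, $H_2(\cT_2) = 0$ outright, so the transgression $H_2(\cT_2) \to H_1(S_2)$ vanishes for trivial reasons, with no need to invoke the injectivity of the fiber inclusion. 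Either way, your argument is sound and complete and follows the paper's intended route.
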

\begin{proof}
This is a diagram chase. The crucial observation is that the composition
\begin{equation}
H_1(\C_{\Omega_0})\rightarrow H_1(\C-\C^{red})\rightarrow H_1(\C)
\end{equation}
is an isomorphism.
\end{proof}

Our aim is to show that the projection $H_2(\C,\C-\C^{red})\rightarrow H_2(\h_2,\h_2-\h_2^{red})$ has a kernel of infinite rank. 
 We will make use of an integral basis for $H_2(\C,\C-\C^{red})$. To construct it, arbitrarily label the two components of $\C^{red}_{\beta}$ with a ``1" and a ``2", respectively; denote these components by $D_{\beta,1}$ and $D_{\beta,2}$. Then it is possible to choose a sufficiently small disc $\Delta_{\beta}$ which is transverse to $\h_{2,\beta}^{red}$ at a single point such that there are two holomorphic sections $s_{\beta,1},s_{\beta,2}: \Delta_{\beta}\rightarrow \C_{\Delta_{\beta}}$ with the property that $s_{\beta,j}$ intersects $D_{\beta,j}$ transversely in a single point. Regarded as elements of $H_2(\C,\C-\C^{red})$, the sections $\{s_{\beta,j}\}_{\beta,j}$ form an integral basis for $H_2(\C,\C-\C^{red})$.
 
 At the same time, the discs $\Delta_{\beta}$ form a basis for $H_2(\h_2,\h_2-\h_2^{red})$. It is clear that the image of $s_{\beta,j}$ in $H_2(\h_2,\h_2-\h_2^{red})$ equal to $\Delta_{\beta}$ for both choices of $j$. Thus the kernel of the projection $H_2(\C-\C^{red})\rightarrow H_2(\h_2,\h_2-\h_2^{red})$ is freely generated by the differences $\{s_{\beta,1}-s_{\beta,2}\}_{\beta}$. This yields
\begin{proposition}\label{prop34}
The group $H_2(\C)$ has infinite rank and, as a consequence, the group $H_2(\chat)$ has infinite rank.
\end{proposition}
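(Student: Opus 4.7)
The plan is to leverage the exact sequence identified in the preceding lemma together with the explicit bases that have just been set up, and then pull the rank information back across the spectral sequence of the $H$-covering $\chat\to\cC$.

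First I would invoke the segment
$$H_2(\cC)\to H_2(\cC,\cC-\cC^{red})\to H_2(\h_2,\h_2-\h_2^{red})\to 0$$
from the preceding lemma. Exactness identifies the image of $H_2(\cC)$ in $H_2(\cC,\cC-\cC^{red})$ with the kernel of the rightmost map. The basis computation just given shows that $H_2(\cC,\cC-\cC^{red})$ is freely generated by the sections $\{s_{\beta,j}\}_{\beta,j}$ and that the projection sends both $s_{\beta,1}$ and $s_{\beta,2}$ to the same basis element $\Delta_\beta\in H_2(\h_2,\h_2-\h_2^{red})$. Consequently the kernel is the free abelian group on the countably infinite set of differences $\{s_{\beta,1}-s_{\beta,2}\}_\beta$, indexed by the homology splittings of $S_2$. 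Since $H_2(\cC)$ surjects onto this infinite-rank free abelian group, $H_2(\cC)$ itself has infinite rank.

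Next I would transfer this infinitude to $\chat$ using the Leray--Serre spectral sequence of the $H$-covering displayed earlier. That calculation has already yielded, rationally, the isomorphism
$$H_2(\cC,\QQ)\cong \Lambda^2 H_\QQ \,\oplus\, \bigl(H_0(H,H_2(\chat,\QQ))\big/ d^2_{3,0}(\Lambda^3 H_\QQ)\bigr).$$
Both $\Lambda^2 H_\QQ$ and $\Lambda^3 H_\QQ$ are finite-dimensional, so the infinite-dimensionality of the left-hand side forces the coinvariant group $H_0(H,H_2(\chat,\QQ))$ to be infinite-dimensional. But the coinvariants are a quotient of $H_2(\chat,\QQ)$, so $H_2(\chat,\QQ)$, and therefore $H_2(\chat)$, must have infinite rank.

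The only place requiring care is verifying that the differences $s_{\beta,1}-s_{\beta,2}$ really are independent in $H_2(\cC,\cC-\cC^{red})$; this is immediate from the freeness of the basis $\{s_{\beta,j}\}_{\beta,j}$, so there is no genuine obstacle. The argument is essentially a direct unwinding of the preceding constructions, with the transfer to $\chat$ reduced to a dimension count on the finite-dimensional exterior powers of $H_\QQ$.
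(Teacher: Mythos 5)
Your argument reproduces the paper's proof in the same order and with the same ingredients: the exact sequence from the preceding lemma identifies the image of $H_2(\C)$ with the kernel of the map to $H_2(\h_2,\h_2-\h_2^{red})$, that kernel is free on the infinitely many differences $s_{\beta,1}-s_{\beta,2}$, and the rational Leray--Serre spectral sequence of $\chat\to\C\to K(H,1)$ (with $\Lambda^2H_\QQ$ and $\Lambda^3H_\QQ$ finite-dimensional) transfers infinite rank to $H_2(\chat)$. This is correct and is essentially the paper's argument.
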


\section{\texorpdfstring{Constructing Elements of $H_2(\chat)$}{Constructing Elements of H_2chat}}

It follows from Mess's description \cite{mess1992torelli} of the monodromy of $\C-\C^{red}$ that any oriented SSCC $\ell$ on a smooth fiber $\C_0$ of $\C-\C^{red}$ is freely homotopic in $\C-\C^{red}$ to an oriented vanishing cycle $v_{\beta}$ in one of the subfamilies $\C_{\Delta_{\beta}}$. The precise value of the index $\beta$ depends on the homology splitting of $\C_0$ induced by $\ell$. 

Lifting to $\chat-\chat^{red}$, this implies that any lift $\tilde\ell$ of $\ell$ to $\chat-\chat^{red}$ is freely homotopic to a lift $\tilde v_{\beta}$ of the vanishing cycle $v_{\beta}$. Assuming that $\Delta_{\beta}$ is sufficiently small, one shows that $\tilde v_{\beta}$ is homologous in $\chat-\chat^{red}$ to a 1-cycle of the form $\pm (\partial \tilde s_{\beta,1}-\partial \tilde s_{\beta,2})$, where $\tilde s_{\beta,j}$ denotes the lift of a section $s_{\beta,j}:\Delta_{\beta}\rightarrow \C_{\Delta_{\beta}}$ to $\chat_{\Delta_{\beta}}$ and $\partial \tilde s_{\beta,j}$ denotes its restriction to the boundary $\partial \Delta_{\beta}$ of $\Delta_{\beta}$. It follows that the image of the homology class of $\tilde \ell$ in $H_1(\chat-\chat^{red})$ is equal to the image of $\pm (\tilde s_{\beta,1}-\tilde s_{\beta,2})$ under the boundary map $\partial: H_2(\chat, \chat-\chat^{red})\rightarrow H_1(\chat-\chat^{red})$. Certainly $\tilde s_{\beta,1}-\tilde s_{\beta,2}$ is a non-zero. 

Now suppose we have a collection of pointed, oriented SSCCs $\{\ell_k\}_{k=1}^n$ on $\C_0$, all with the same basepoint $x_0$. Further suppose the $\ell_k$ distinct homology splittings on $\C_0$ and that the product $\ell_1\cdots \ell_n$ is trivial in $\pi_1(\C_0,x_0)$. Let $\{\tilde \ell_k\}$ be the set of of pointed lifts of the $\ell_k$ to $\chat_0\subset \chat-\chat^{red}$  with basepoint $\tilde x_0\in \chat_0$. Then there exists a non-zero element
\begin{equation}
S = \sum_k \pm\left(\tilde s_{\beta(k),1}-\tilde s_{\beta(k),2}\right)\in H_2(\chat,\chat-\chat^{red})
\end{equation}
with the property that $\partial S = \displaystyle \sum_k [\tilde \ell_k] = 0\in H_1(\chat-\chat^{red})$. Here $\beta(k)$ indexes the component of $\h_2^{red}$ corresponding to the homology splitting induced by $\ell_k$, and $[\tilde \ell_k]$ denotes the homology class of $\tilde \ell_k$ in $\chat-\chat^{red}$.
\\\\\indent 
Very shortly, we will argue that there exist non-empty collections $\{\ell_k\}$ of such SSCCs. In order to do so, we work with the standard reference surface $S_2$. Let $\{a_1,b_1,a_2,b_2\}$ denote the generating set for $\pi_1(S_2, *)$ pictured below. The crucial tool here is the Hall-Witt identity (see \cite{magnus2004combinatorial}, p. 290), which states that, for any elements $x,y,z$ in a group $G$, 
\begin{equation}
[x,yz] = [x,y]\cdot [x,z]^{y^{-1}} \ \ \ \ \ \text{or alternatively}\ \ \ \ \ [xy,z] = [y,z]^{x^{-1}}[x,z]
\end{equation}
where $[a,b] = aba^{-1}b^{-1}$ and $a^b = b^{-1}ab$.
\begin{figure}[h] 
\centering{
\includegraphics[scale=0.5]{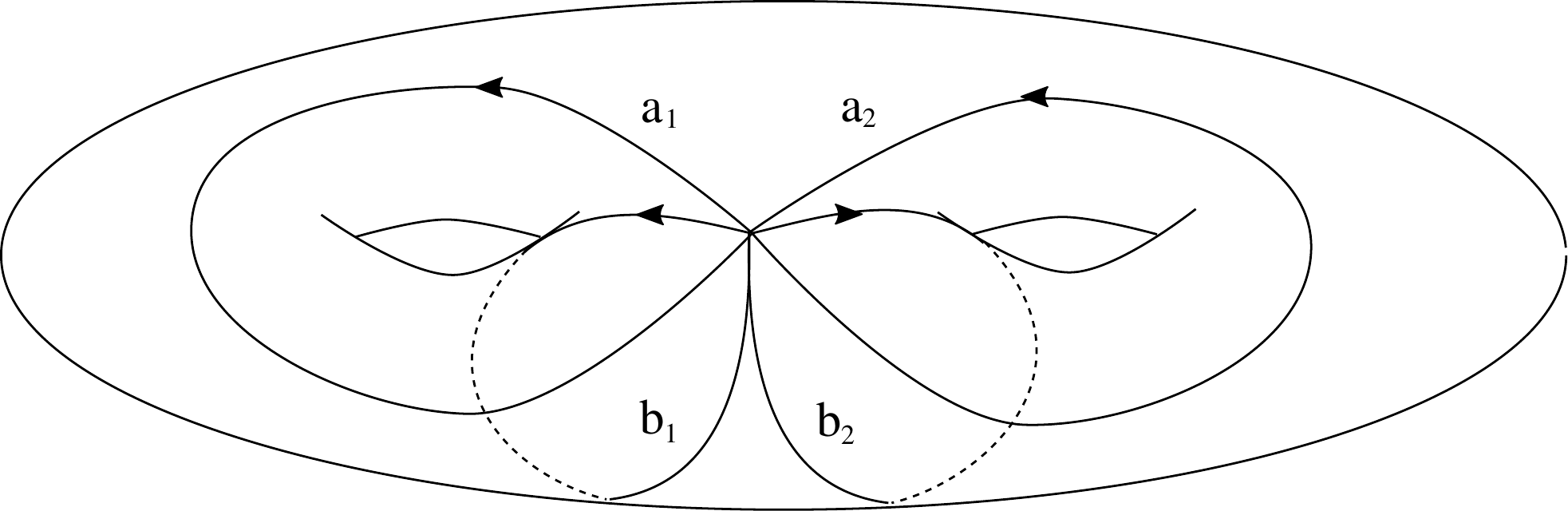}}
\caption{Curves used in the proof of Lemma \ref{lemma34}}
\end{figure} 
\begin{lemma}\label{lemma34}
The commutators $[a_1,b_1a_2], [b_1^{-1},a_2a_1]^{b_1^{-1}a_1^{-1}b_1^{-1}}, [a_1,b_1]$ and $[a_1b_1,a_2a_1]^{b_1^{-1}}$  are represented by oriented SSCCs on $S_2$. They induce homology splittings that do not all coincide, and there is a multiplicative relation between them of the form
\begin{equation}
[a_1,b_1a_2] = [a_1,b_1][b_1^{-1},a_2a_1]^{b_1^{-1}a_1^{-1}b_1^{-1}} [a_1b_1,a_2a_1]^{b_1^{-1}}.
\end{equation}
\end{lemma}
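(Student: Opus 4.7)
The argument naturally splits into three essentially independent pieces: verifying the multiplicative identity in the free group, realizing each of the four commutators by an oriented SSCC on $S_2$, and distinguishing at least two of the induced homology splittings. The algebraic identity should follow from two carefully chosen applications of the Hall--Witt identity together with a small conjugation lemma; the geometric realization will be carried out by exhibiting the curves drawn in Figure 1; and the inequivalence of splittings will be read off from those pictures in $H_1(S_2,\ZZ) = H$.

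For the multiplicative identity, I would first apply $[x, yz] = [x,y]\cdot[x,z]^{y^{-1}}$ with $x = a_1$, $y = b_1$, $z = a_2$ to obtain
\begin{equation*}
[a_1, b_1 a_2] = [a_1, b_1]\cdot [a_1, a_2]^{b_1^{-1}}.
\end{equation*}
Thus it suffices to show $[a_1, a_2]^{b_1^{-1}} = [b_1^{-1}, a_2 a_1]^{b_1^{-1} a_1^{-1} b_1^{-1}}\,[a_1 b_1, a_2 a_1]^{b_1^{-1}}$. Conjugating both sides by $b_1$ reduces this to $[a_1, a_2] = [a_2 a_1, b_1]^{a_1^{-1}}\,[a_1 b_1, a_2 a_1]$, after using the elementary identity $[w, b_1] = [b_1^{-1}, w]^{b_1^{-1}}$ to rewrite the first factor. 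The remaining equality is exactly the second form $[xy, z] = [y,z]^{x^{-1}}[x,z]$ of Hall--Witt applied with $x = a_1$, $y = b_1$, $z = a_2 a_1$, together with the collapse $[a_1, a_2 a_1] = [a_1, a_2]$.

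For the SSCC claim, I would exhibit four simple closed curves in the standard planar model of $S_2$, using the generators $\{a_1, b_1, a_2, b_2\}$ depicted in Figure 1. The curve representing $[a_1, b_1]$ is the classical waist of the surface, separating it into two one-holed tori; the three remaining words each bound a genus-one subsurface of $S_2$, and I would read off the boundary word of each such subsurface by tracing against $\{a_i, b_i\}$ with a definite orientation and basepoint-joining arc. The orientation conventions and the choice of basepoint arcs have to be arranged so that the boundary words match the given expressions exactly (including the conjugating strings $b_1^{-1}a_1^{-1}b_1^{-1}$ and $b_1^{-1}$ appearing in the statement); this bookkeeping is the main obstacle, and the figure is precisely what encodes the choices.

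Finally, distinctness of homology splittings will follow once we compute, from the picture, the subspace $V_+\subset H$ spanned by cycles lying on one side of each SSCC. The curve $[a_1,b_1]$ produces the splitting $\langle\overline{a_1}, \overline{b_1}\rangle \oplus \langle \overline{a_2}, \overline{b_2}\rangle$. For $[a_1, b_1 a_2]$, the subsurface on one side supports $\overline{a_1}$ and $\overline{b_1}+\overline{a_2}$, a rank-two summand of $H$ distinct from $\langle\overline{a_1},\overline{b_1}\rangle$. This alone shows that the four splittings are not all equal, which is all the lemma requires; I would tabulate the remaining two as a sanity check but would not need full mutual distinctness.
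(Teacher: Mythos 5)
Your proof is correct and takes essentially the same approach as the paper: the multiplicative identity is obtained by two applications of the same commutator expansion rules, the SSCC realization is deferred to a figure in the standard genus-two model, and the non-coincidence of splittings is read off from the homology summands. Your reorganization --- conjugating by $b_1$, rewriting via $[w,b_1]=[b_1^{-1},w]^{b_1^{-1}}$, and cancelling $[a_2a_1,b_1]^{a_1^{-1}}[b_1,a_2a_1]^{a_1^{-1}}$ --- is algebraically equivalent to the paper's insertion $[a_1,a_2a_1]=[a_1b_1\cdot b_1^{-1},a_2a_1]$ followed by expansion.
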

\begin{proof}
The proof that the four loops given above are SSCCs is an easy, if somewhat tedious, verification. The corresponding homology splittings are given in the table below.

\vspace{.01in}
\begin{figure}[h]
\caption{SSCC's and the induced homology splittings.}
\begin{center}
    \begin{tabular}{| l | l | l | l |}
    \hline
    SSCC & Homology Splitting  \\ \hline
    %%%%%%%%%%%%%%%%%%%%%%
    %%%%%%%%%%%%%%%%%%%%%%
    $[a_1,b_1a_2]$  & $\ZZ\langle a_1, b_1+a_2\rangle\oplus \ZZ\langle a_2, b_2+a_1\rangle$ \\ \hline
    %%%%%%%%%%%%%%%%%%%%%%
    $[b_1^{-1},a_2a_1]^{b_1^{-1}a_1^{-1}b_1^{-1}}$   & $\ZZ\langle -b_1, a_1+a_2\rangle\oplus \ZZ\langle a_2, b_2-b_1\rangle$ \\ \hline
    %%%%%%%%%%%%%%%%%%%%%%
    $[a_1,b_1]$     & $\ZZ\langle a_1, b_1\rangle\oplus \ZZ\langle a_2, b_2\rangle$ \\ \hline
    %%%%%%%%%%%%%%%%%%%%%%
    $[a_1b_1,a_2a_1]^{b_1^{-1}}$   & $\ZZ\langle a_1+b_1, a_1+a_2\rangle\oplus \ZZ\langle a_2, b_2-a_1-b_1\rangle$  \\ \hline
    %%%%%%%%%%%%%%%%%%%%%%
        \end{tabular}
\end{center}
\end{figure}
It is easily seen that the homology splittingss appearing in Figure 2 are all distinct. Finally we verify the commutator identity in the statement of the lemma. It is a striaghtforward application of the Hall-Witt identities. We remark that the proof applies equally well in any group. Start with the separating simple closed curve $[a,bc]$ and expand using Hall-Witt: $[a_1,b_1a_2] = [a_1,b_1]\cdot [a_1,a_2]^{b_1^{-1}}$.
Using the identity $[x,y] = [x,yx]$ we may write
$$[a_1,a_2] = [a_1,a_2a_1]= [a_1b_1b_1^{-1},a_2a_1] = [b_1^{-1},a_2a_1]^{b_1^{-1}a_1^{-1}}\cdot [a_1b_1,a_2a_1]$$
so that finally we have\ \ $[a_1,b_1a_2] =  [a_1,b_1]\cdot [b_1^{-1},a_2a_1]^{b_1^{-1}a_1^{-1}b_1^{-1}}\cdot [a_1b_1,a_2a_1]^{b_1^{-1}}$.
\end{proof}

\vspace{.15in}
\noindent Kevin Kordek\\
Department of Mathematics\\
Mailstop 3368\\
Texas A$\&$M University\\
College Station, TX 77843-3368\\
 E-mail: \sf{kordek@math.tamu.edu}

\end{document}